\title[Fundamental structure for Lie algebra homology]{On fundamental structure underlying Lie algebra homology with coefficients tensor products of the adjoint representation}
\author{Geoffrey Powell}
\address{Univ Angers, CNRS, LAREMA, SFR MATHSTIC, F-49000 Angers, France}
\email{Geoffrey.Powell@math.cnrs.fr}
\urladdr{https://math.univ-angers.fr/~powell/}
\keywords{}
\subjclass[2000]{}
\newtheorem{THM}{Theorem}
\newtheorem{COR}[THM]{Corollary}
\newtheorem{thm}{Theorem}[section]
\newtheorem{prop}[thm]{Proposition}
\newtheorem{cor}[thm]{Corollary}
\newtheorem{lem}[thm]{Lemma}
\theoremstyle{definition}
\newtheorem{defn}[thm]{Definition}
\newtheorem{exam}[thm]{Example}
\theoremstyle{remark}
\newtheorem{rem}[thm]{Remark}
\newtheorem{nota}[thm]{Notation}
\renewcommand{\phi}{\varphi}
\renewcommand{\hom}{\mathrm{Hom}}
\newcommand{\sym}{\mathfrak{S}}
\newcommand{\gr}{\mathbf{gr}}
\newcommand{\qmod}{\mathtt{Mod}_\rat}
\newcommand{\f}{\mathcal{F}}
\newcommand{\nat}{\mathbb{N}}
\newcommand{\op}{^\mathrm{op}}
\newcommand{\ob}{\mathrm{Ob}\hspace{2pt}}
\newcommand{\fb}{{\bm{\Sigma}}}
\newcommand{\fs}{\bm{\Omega}}
\newcommand{\lie}{\mathfrak{Lie}}
\newcommand{\ass}{\mathfrak{Ass}}
\newcommand{\uass}{\mathfrak{Ass}^u}
\newcommand{\smodug}{{{}_{\rat\fb}\mathtt{Mod}}}
\newcommand{\smodopug}{{\mathtt{Mod}_{\rat\fb}}}
\newcommand{\fbbimod}{{{}_{\rat\fb}\mathtt{Mod}_{\rat\fb}}}
\newcommand{\cat}{\mathbf{Cat}\hspace{1pt}}
\newcommand{\flie}{\f_{\lie}}
\newcommand{\g}{\mathfrak{g}}
\newcommand{\id}{\mathrm{Id}}
\newcommand{\mut}{\widetilde{\mu}}
\newcommand{\lmod}{{}_{\catlie}\mathtt{Mod}}
\newcommand{\rmod}[1][A]{\mathtt{Mod}_{\catlie}}
\newcommand{\bimod}{{}_{\catlie}\mathtt{Mod}_{\catlie}}
\newcommand{\catlie}{{\cat\lie}}
\newcommand{\rat}{\mathbb{Q}}
\newcommand{\liealg}{\mathsf{Lie}}
\newcommand{\dgcat}{\mathbb{C}\lie}
\newcommand{\homol}{\mathbf{H}}
\newcommand{\lHmod}{{}_{\homol_0} \mathtt{Mod}}
\newcommand{\dgmod}{{}_{\dgcat} \mathtt{DGMod}}
\newcommand{\ce}{\mathsf{CE}}
\numberwithin{equation}{section}
\begin{document}

\begin{abstract}
This paper exhibits fundamental structure underlying Lie algebra homology with coefficients in tensor products of the adjoint representation, mostly focusing upon the case of free Lie algebras. 

The main result yields a DG category that is constructed from the PROP associated to the Lie operad. Underlying this is a two-term complex of bimodules over this PROP; it is  a quotient of the universal Chevalley-Eilenberg complex.

The homology of this DG category  is intimately related to outer functors over free groups (introduced in earlier joint work with Vespa). This uses the author's previous results relating functors on free groups to representations of the PROP associated to the Lie operad.

This gives a  direct algebraic explanation as to why the degree one homology  should correspond to an outer functor. Hitherto, the only known argument relied upon the relationship with the higher Hochschild homology functors that arise from the work of Turchin and Willwacher.
\end{abstract}

\maketitle

\section{Introduction}
\label{sect:intro}

This paper can be viewed in the general context of understanding the natural structure of the family of  Lie algebra homology groups  $H_* (\g ; \g^{\otimes n})$, for $n \in \nat$ and a $\rat$-Lie algebra $\g$,  with coefficients given by the  iterated tensor product of the adjoint representation.  Not only does the symmetric group $\sym_n$ act naturally on $H_* (\g; \g^{\otimes n})$ via the place permutation action on the coefficients, but the family forms a left $\catlie$-module, where $\catlie$ is the $\rat$-linear category associated to the Lie operad. This action is induced by that on the coefficients (explicitly, $\underline{\g} : n \mapsto \g^{\otimes n}$ forms a left $\catlie$-module), together with the fact that this structure is compatible with the $\g$-actions.

Here,  we mostly focus upon the universal case, namely the free Lie algebras $\liealg (V)$, considered as a functor of the $\rat$-vector space $V$. (In the body of the paper, in order to treat all the structure that appears, we work directly with the Lie operad and structures derived from it.) 

Our interest in the case $\g = \liealg (V)$ stems from the relationship between the Lie algebra homology
 $$H_* (\liealg  (V); \liealg(V) ^{\otimes \bullet}) $$
 (considered as a functor of $V$) and the higher Hochschild homology functors that arise from the work of Turchin and Willwacher \cite{MR3982870}. This relationship follows from the author's work with Vespa \cite{PV}, using the interpretation of analytic contravariant functors on free groups in terms of left $\catlie$-modules  \cite{P_analytic}, together with the interpretation of outer functors on free groups in this framework \cite{P_outer}. (An outer functor is a functor on the category of free groups on which all inner automorphism groups act trivially.)

For $\g = \liealg (V)$, the only non-trivial homology groups are in homological dimensions $0$ and $1$; more particularly, the homology with coefficients $\liealg(V)^{\otimes n}$ is calculated as that of the chain complex 
\begin{eqnarray}
\label{eqn:cx_V}
\liealg (V)^{\otimes n} \otimes V 
\rightarrow 
\liealg(V)^{\otimes n},
\end{eqnarray}
which is natural in $V$. 

In order to treat all structure, this is interpreted here via the Schur correspondence as a chain complex 
\begin{eqnarray}
\label{eqn:mut}
 \delta^{(1)} \catlie 
\cong 
 \catlie \odot I
 \stackrel{\mut^{(1)}}{ \longrightarrow} \catlie 
\end{eqnarray}
that encodes (\ref{eqn:cx_V}) for all $n \in \nat$. (Here $\catlie \odot I$ is the convolution product of $\catlie$ with the identity operad $I$; the functor $\delta^{(1)}$ and the natural transformation $\mut^{(1)}$ are introduced in Section \ref{sect:complex}.) 

The main purpose of this paper is to exhibit the rich structure of the chain complex (\ref{eqn:mut}). The first step is showing that there is a natural $\catlie$-bimodule structure on $\delta^{(1)} \catlie$. Whereas the left action is given for the reasons above, the right action is slightly more subtle (see Proposition \ref{prop:delta1_catlie_bimodule}). Then, in Proposition \ref{prop:mut1_bimodule}, it is proved that $\mut^{(1)}$ is a morphism of $\catlie$-bimodules. 

That (\ref{eqn:mut}) might be a morphism of $\catlie$-bimodules was first suggested by the fact that the `universal' Chevalley-Eilenberg complex $\ce_\lie$ for $\lie$ with coefficients in $\catlie$ is a complex of $\catlie$-bimodules (the construction of $\ce_\lie$ is reviewed in Section \ref{sect:ce}). The key property of $\ce_\lie$ is that, for $\g$ a Lie algebra, $\ce_\lie \otimes_\catlie \underline{\g}$ is the Chevalley-Eilenberg complex that calculates $H_* (\g; \underline{\g})$, where $\underline{\g} : n \mapsto \g^{\otimes n}$ as above.

The $\catlie$-bimodule structure on $\delta^{(1)} \catlie$ allows the formation of 
\[
\dgcat := \catlie \oplus \delta^{(1)} \catlie [1],
\]
which is considered as a category enriched in $\nat$-graded $\rat$-vector spaces. It can be viewed as the square-zero extension of $\catlie$ by $\delta^{(1)}\catlie$, with appropriate homological grading. 

The differential $\mut^{(1)}$ is compatible with the category structure:

\begin{THM}
\label{THM:A}
(Theorem \ref{thm:DG_category}.)
The category $\dgcat = (\catlie \oplus \delta^{(1)} \catlie[1], \mut^{(1)})$ equipped with the differential $\mut^{(1)}$ is a differential graded category. 
\end{THM}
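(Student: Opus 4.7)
The plan is to verify the three ingredients of a DG category structure on $\dgcat$: the underlying graded $\rat$-linear category, the differential $d$ squaring to zero, and the Leibniz rule making composition a chain map of Hom complexes.

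First I would describe the composition as the canonical one on a split square-zero extension of a category by a bimodule: writing a morphism as a pair $(a, m)$ with $a \in \catlie$ in degree $0$ and $m \in \delta^{(1)}\catlie$ in degree $1$, set
\[
(a, m) \circ (b, n) := (a \circ b,\ a \cdot n + m \cdot b),
\]
where $a \cdot n$ and $m \cdot b$ denote the left and right actions of $\catlie$ furnished by Proposition \ref{prop:delta1_catlie_bimodule}; the composition of two degree-$1$ morphisms is forced to be zero, since $\dgcat$ has no morphisms of degree $2$. Unitality and associativity then follow formally from the bimodule axioms. The differential $d$, defined to be $\mut^{(1)}$ in degree $1$ and zero in degree $0$, satisfies $d^2 = 0$ automatically.

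The substance is in checking the Leibniz rule $d(fg) = d(f) g + (-1)^{|f|} f d(g)$ on homogeneous $f, g$. If at most one of $f, g$ lies in degree $1$, the identity reduces directly to the statement that $\mut^{(1)}$ intertwines the two bimodule actions, which is exactly Proposition \ref{prop:mut1_bimodule}. The remaining case is $f = m$ and $g = n$ both in degree $1$; since $m \circ n = 0$, the Leibniz identity becomes the non-formal relation
\[
\mut^{(1)}(m) \cdot n \;=\; m \cdot \mut^{(1)}(n) \qquad \text{in } \delta^{(1)}\catlie,
\]
for every composable pair $m, n \in \delta^{(1)}\catlie$.

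This last relation is the main obstacle; it does not follow formally from $\mut^{(1)}$ being a bimodule map. To establish it, I would unpack the isomorphism $\delta^{(1)}\catlie \cong \catlie \odot I$ together with the explicit formulas for the left action, the more subtle right action of Proposition \ref{prop:delta1_catlie_bimodule}, and the map $\mut^{(1)}$ itself (which universally encodes the Chevalley--Eilenberg differential (\ref{eqn:cx_V})). The goal would be to show that both sides compute the same operadic expression, namely the insertion of the universal Lie bracket at each of two distinct marked inputs of the underlying element of $\catlie$; that the order of these two insertions is immaterial should ultimately reflect operadic associativity in $\lie$ when the two brackets act on disjoint slots. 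The key point making this work will be that the right action of Proposition \ref{prop:delta1_catlie_bimodule} is tailored precisely so that, under $\mut^{(1)}$, the two marked inputs are handled symmetrically. I expect this to be the most delicate verification in the argument, and the step that forces one to commit to explicit normal forms for elements of $\catlie \odot I$.
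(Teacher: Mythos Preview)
Your reduction is exactly the paper's: via the analogue of Lemma \ref{lem:DGA_equivalent}, the only nontrivial check is the identity $\mut^{(1)}(m)\cdot n = m\cdot\mut^{(1)}(n)$ for $m,n$ both in degree $1$, the mixed-degree cases following from Proposition \ref{prop:mut1_bimodule}.

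Your proposed mechanism for that identity, however, is not the right one. The two sides do \emph{not} both reduce to ``insertion of a bracket at each of two distinct marked inputs,'' and the equality is not an instance of operadic associativity on disjoint slots. The paper carries out the computation by first reducing (Lemma \ref{lem:iota_generate}, Remark \ref{rem:generators}) to the bimodule generators $\iota_a$, so that it suffices to treat $\iota_n\otimes\iota_{n+1}$. The left-hand side gives $\mu(n-1)\boxplus 1$, which involves only \emph{one} bracket, the $(n{+}1)$st strand remaining free as the marked input. For the right-hand side one decomposes $\mut^{(1)}(\iota_{n+1})=\mu(n)=\mu'(n)+\mu''(n)$, where $\mu''(n)$ is the single summand bracketing the marked strand with input $n{+}1$. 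The right action of $\mu'(n)$ on $\iota_n$ is ordinary precomposition, but the right action of $\mu''(n)$ invokes the projection $\pi$ of Definition \ref{defn:pi} and, by Lemma \ref{lem:understand_pi}, produces two terms; one of them exactly cancels the $\mu'(n)$ contribution, and the survivor is $\mu(n-1)\boxplus 1$. So the identity holds by a cancellation engineered into $\pi$, not by commutativity of disjoint insertions. Your later sentence---that the right action of Proposition \ref{prop:delta1_catlie_bimodule} is ``tailored precisely'' to make this work---is the correct intuition; the ``disjoint slots'' picture would not lead to a proof.
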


Writing the homology of $\dgcat$ as $\homol$, it follows immediately that $\homol$ is a category enriched in $\nat$-graded $\rat$-vector spaces; in particular, $\homol_0$ is a $\rat$-linear category (see Proposition \ref{prop:homol}). 

Now, the category of $\homol_0$-modules identifies with the category $\flie^\mu$ introduced in \cite{P_outer} that models the category of outer functors on free groups (see Theorem \ref{thm:fliemu_homol0}). From this, one deduces immediately:

\begin{COR}
\label{COR}
(Corollary \ref{cor:H1_outer}.)
Considered as a left $\catlie$-module, $\homol_1$ belongs to $\flie^\mu$. 
\end{COR}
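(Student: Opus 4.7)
The plan is to deduce the corollary from the general fact that the homology of any DG category is itself a graded category. Applied to $\dgcat$ (Theorem \ref{thm:DG_category}), this yields that $\homol_1 \cong \ker(\mut^{(1)}) \subseteq \delta^{(1)} \catlie$ is naturally a bimodule over the $\rat$-linear category $\homol_0$. Since Theorem \ref{thm:fliemu_homol0} identifies left $\homol_0$-modules with $\flie^\mu$, the corollary follows immediately once one checks that the left $\catlie$-module structure on $\homol_1$ asserted in the statement coincides with the restriction of the left $\homol_0$-action along the canonical projection $\catlie \twoheadrightarrow \homol_0$.

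The crux is therefore to verify that the left $\catlie$-action on $\delta^{(1)} \catlie$ descends to a left $\homol_0$-action on $\homol_1$. That the action restricts to cycles, i.e.\ preserves $\ker(\mut^{(1)})$, is immediate from the fact that $\mut^{(1)}$ is a morphism of $\catlie$-bimodules (Proposition \ref{prop:mut1_bimodule}). That it factors through $\homol_0$ is the key computation. Given $f = \mut^{(1)}(g)$ with $g \in \delta^{(1)}\catlie$ of homological degree $1$, and a cycle $x \in \ker(\mut^{(1)})$, the Leibniz rule in $\dgcat$ yields
\[
\mut^{(1)}(g \cdot x) \;=\; \mut^{(1)}(g) \cdot x \;+\; (-1)^{|g|}\, g \cdot \mut^{(1)}(x) \;=\; f \cdot x.
\]
In the square-zero extension $\dgcat = \catlie \oplus \delta^{(1)} \catlie[1]$ the composite $g \cdot x$ lies in homological degree $2$ and therefore vanishes, so $f \cdot x = \mut^{(1)}(0) = 0$.

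I do not anticipate any substantive obstacle beyond this Leibniz-rule computation. The remainder is organisational: checking that the several natural identifications in play (of $\homol_0$ with the quotient of $\catlie$ by the image of $\mut^{(1)}$, of $\flie^\mu$ with $\lHmod$ under Theorem \ref{thm:fliemu_homol0}, and of the $\catlie$-action on $\homol_1$ with that induced from the bimodule $\delta^{(1)} \catlie$) are mutually compatible. Each should follow tautologically from the corresponding construction.
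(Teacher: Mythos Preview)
Your proposal is correct and follows essentially the same route as the paper: the paper deduces the corollary immediately from Proposition \ref{prop:homol} (which records that $\homol_1$ is an $\homol_0$-bimodule, as a consequence of the DG category structure of Theorem \ref{thm:DG_category}) together with Theorem \ref{thm:fliemu_homol0}. Your Leibniz-rule computation simply unpacks why the left $\catlie$-action on $\ker(\mut^{(1)})$ factors through $\homol_0$, which is exactly the content the paper encapsulates in Proposition \ref{prop:homol}(4).
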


Via the theory of \cite{P_analytic}, this can be paraphrased as stating that $\homol_1$ corresponds to an outer functor. This is of interest in that it gives a direct {\em algebraic} explanation as to why this should be true. Hitherto, the only known argument relied on the interpretation in terms of higher Hochschild homology, where one can appeal to the fact that the conjugation action on the classifying space of a group in {\em unpointed} spaces is homotopically trivial. 
 Theorem \ref{THM:A} should be seen as the fundamental underlying result, providing much more structure, encapsulated in a simple framework.

If one forgets structure, then the underlying complex of $\catlie$-bimodules of $\dgcat$ is related to the universal Chevalley-Eilenberg complex $\ce_\lie$. Explicitly: 

\begin{THM}
\label{THM:B}
(Cf. Theorem \ref{thm:ce_dgcat}.)
There is a surjective  morphism of complexes of $\catlie$-bimodules 
\[
\ce_\lie \twoheadrightarrow \dgcat
\]
which is a weak equivalence. Moreover, $\ce_\lie$ has terms that are projective as right $\catlie$-modules.
\end{THM}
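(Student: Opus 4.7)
The plan is to construct the surjection $\ce_\lie \twoheadrightarrow \dgcat$ explicitly and then verify it is a quasi-isomorphism. In degree zero both complexes are $\catlie$, and one takes the identity; in degrees $\geq 2$ the map is zero. The interesting component is in degree one. The construction of $\ce_\lie^1$ (reviewed in Section \ref{sect:ce}) is built using the Lie operad $\lie$, and hence admits a canonical projection onto its arity-one component $\catlie \odot \lie(1) = \catlie \odot I = \delta^{(1)}\catlie$, given by killing the summands of arity $\geq 2$. To see this yields a chain map, one must check two things: (i) the restriction of the degree-one CE differential $\ce_\lie^1 \to \catlie$ to the arity-one summand coincides with $\mut^{(1)}$, which should be essentially definitional since $\mut^{(1)}$ encodes the adjoint action on a single slot; (ii) the image of the degree-two CE differential $\ce_\lie^2 \to \ce_\lie^1$ lies entirely in the higher-arity part, since it is built from Lie brackets, which are arity-$\geq 2$ operations. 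Compatibility with the $\catlie$-bimodule structure is transparent on the left; on the right it uses that the projection $\lie \twoheadrightarrow I$ of operads is a map of symmetric sequences compatible with the right $\catlie$-action.

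The principal obstacle is establishing the quasi-isomorphism. The plan is to evaluate against $\underline{\liealg(V)}$ for $V$ of unbounded dimension. Then $\ce_\lie \otimes_\catlie \underline{\liealg(V)}$ is the standard Chevalley--Eilenberg complex computing $H_*(\liealg(V); \liealg(V)^{\otimes \bullet})$, while $\dgcat \otimes_\catlie \underline{\liealg(V)}$ reproduces (\ref{eqn:cx_V}), which is well known to compute the same homology (by the length-one Koszul resolution of $\rat$ over $U(\liealg(V))$, available because $V$ freely generates). Hence the induced morphism on evaluations is a quasi-isomorphism. By the Schur correspondence, letting $\dim V \to \infty$ detects all $\sym_*$-isotypic components of the underlying symmetric sequences, and one concludes that the morphism of underlying $\rat$-complexes is a quasi-isomorphism; since the morphism is one of $\catlie$-bimodules, this gives the required statement. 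A more intrinsic alternative would identify $\ker(\ce_\lie \to \dgcat)$ with an acyclic twisted complex built from $\lie$ in arities $\geq 2$, relying on Koszulness of $\lie$.

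For the projectivity statement, each $\ce_\lie^k$ is by construction a convolution of the form $\catlie \odot W_k$, where $W_k$ is a symmetric sequence built from wedge powers of $\lie$; the right $\catlie$-action is inherited entirely from that of $\catlie$ on itself. Choosing a $\rat$-basis of $W_k$ exhibits $\ce_\lie^k$ as a direct sum of shifts of $\catlie$ regarded as a right module over itself, and $\catlie$ is tautologically projective as such. The verification amounts to checking that the differentials of $\ce_\lie$ do not mix the right $\catlie$-action with the $W_k$-factors, which is built into the construction of the universal Chevalley--Eilenberg complex.
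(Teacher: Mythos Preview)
Your degree-one component is wrong, and this breaks the chain-map property. You propose the naive projection $p : \catlie \odot \lie \to \catlie \odot I$ that kills arity $\geq 2$. For this to yield a chain map with the identity in degree zero, you need $\mut^{(1)} \circ p = \mut$. But take $\mathfrak{Z} \otimes X$ with $X \in \lie(k)$, $k \geq 2$: then $p(\mathfrak{Z} \otimes X) = 0$, whereas $\mut(\mathfrak{Z} \otimes X)$ is the (generally nonzero) adjoint action of $X$ on $\mathfrak{Z}$. So the right-hand square of your would-be chain map fails to commute. Your claim (ii) is also false: the Chevalley--Eilenberg differential sends $\mathfrak{Z} \otimes (x \wedge y)$ to $\mathfrak{Z}\cdot x \otimes y - \mathfrak{Z}\cdot y \otimes x - \mathfrak{Z} \otimes [x,y]$, and the first two terms land in the arity-one summand when $x,y$ are generators, so the image of $d_2$ does \emph{not} lie entirely in arity $\geq 2$.

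The paper's map in degree one is the morphism $\pi$ of Definition~\ref{defn:pi}, built from the right $\uass$-module structure on $\catlie$: one embeds $\catlie \odot \lie \hookrightarrow \catlie \odot \ass \cong \catlie \odot \uass \odot I$ and then applies $\mut^{\uass} \odot I$. Concretely (Lemma~\ref{lem:understand_pi}), $\pi(V)(\mathfrak{Z} \otimes [X,Y]) = \pi(V)(\mathfrak{Z}\cdot X \otimes Y) - \pi(V)(\mathfrak{Z}\cdot Y \otimes X)$, so $\pi$ does \emph{not} annihilate the higher-arity part; rather it rewrites it via the adjoint action. This is exactly what is needed for Lemma~\ref{lem:mut_mut1_compat_pi} ($\mut^{(1)} \circ \pi = \mut$) and for the composite with $d_2$ to vanish. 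Relatedly, the right $\catlie$-module structure on $\delta^{(1)}\catlie$ used in $\dgcat$ is \emph{not} the one induced by the operad augmentation $\lie \to I$; it is the structure of Proposition~\ref{prop:delta1_catlie_bimodule}, characterized by making $\pi$ a bimodule map. With the naive structure you suggest, $\mut^{(1)}$ would not be a morphism of right $\catlie$-modules, so your target would not even be $\dgcat$.

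Your quasi-isomorphism argument (evaluate on $\underline{\liealg(V)}$ and invoke the Schur correspondence) and your projectivity argument are both sound in spirit; the paper argues the quasi-isomorphism slightly differently, observing that $\pi$ is a retract (in left $\catlie$-modules) of the inclusion $\delta^{(1)}\catlie \hookrightarrow \delta\catlie$. But neither can rescue the proof until the correct degree-one map is in place.
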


This result serves to make explicit the relationship between Lie algebra homology and an approximation that is obtained by using $\dgcat$ (see Example \ref{exam:lie_alg_hom_vs_dgcat_hom}). Moreover, it is clear that this is not specific to working with Lie algebras: one can consider a left $\catlie$-module $M$ and the induced surjection of chain complexes 
\[
\ce_\lie \otimes_\catlie M 
\twoheadrightarrow \dgcat\otimes_\catlie M.
\] 
(See Corollary \ref{cor:nat_trans_ce_dgcat_homol} and Proposition \ref{prop:properties_dgcat_otimes}.)

Now, as explained in Section \ref{subsect:exam}, the homology $H_* (\ce_\lie \otimes_\catlie M)$ gives an approximation to the left derived functors of
\[
M \mapsto \homol_0 \otimes_{\catlie} M.
\]
These left derived functors are important in relation to the theory of \cite{P_outer}. This is significant, since the functors $M \mapsto H_* (\ce_\lie \otimes_\catlie M)$ are calculable for the fundamental family given by $M= \rat \sym_n$ (for $n \in \nat$), considered as a left $\catlie$-module supported on $\mathbf{n}$ (see Proposition \ref{prop:H_ce_QSn}). Consequences of this will be developed elsewhere.

There is another interesting and conceptual relationship between $\homol_1$ and $\homol_0$ that was implicit in \cite{PV} (in the following statement, $\fb$ is the category of finite sets and bijections): 

\begin{THM}
(Theorem \ref{thm:syzygy}.)
The underlying complex of $\dgcat$ provides a projective resolution of $\homol_0$ in left $\catlie$, right $\rat\fb$-modules.

Hence, the exact sequence 
\[
0
\rightarrow
\homol_1 
\rightarrow
\catlie \odot I
\rightarrow 
\catlie 
\rightarrow 
\homol_0
\rightarrow 
0
\]
restricted to left $\catlie$, right $\rat\fb$-modules exhibits $\homol_1$ as the second syzygy of $\homol_0$.
\end{THM}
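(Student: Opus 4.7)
The plan is to establish the theorem from two ingredients. First, the four-term exact sequence is immediate from the construction of $\dgcat$: since $\homol_*$ is the homology of the two-term underlying complex $\catlie \odot I \stackrel{\mut^{(1)}}{\longrightarrow} \catlie$, one has $\homol_0 = \mathrm{coker}(\mut^{(1)})$ and $\homol_1 = \ker(\mut^{(1)})$, so that the sequence results from splicing the kernel and cokernel exact sequences. Second, one must show that both middle terms $\catlie \odot I$ and $\catlie$ are projective as left $\catlie$, right $\rat\fb$-modules. Given these, the two-term complex is the beginning of a projective resolution of $\homol_0$, and $\homol_1$, being the kernel of the map $\catlie \odot I \to \catlie$ with projective target, is by definition a second syzygy of $\homol_0$.

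For the projectivity of $\catlie$ as a left $\catlie$, right $\rat\fb$-bimodule, I would argue as follows. Such a bimodule is a functor $\catlie \times \fb^{\mathrm{op}} \to \kmod$. The bimodule $\catlie$ decomposes as a direct sum, indexed by $\mathbf{n} \in \ob \catlie = \nat$, of terms of the form $\catlie(\mathbf{n}, -) \otimes \rat\sym_n$, with the right $\rat\fb$-action visible on the $\sym_n$-factor. Each such summand is projective in the bimodule category: $\catlie(\mathbf{n}, -)$ is a projective left $\catlie$-module, and $\rat\sym_n$ is a projective right $\rat\fb$-module. For $\catlie \odot I$, the identity operad $I$ is concentrated on $\mathbf{1}$ with the trivial $\sym_1$-action, hence is projective as a right $\rat\fb$-module; one then verifies that $\catlie \odot I$, equipped with the bimodule structure of Proposition \ref{prop:delta1_catlie_bimodule}, admits a similar summand decomposition into projective bimodules.

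The main obstacle will be verifying that the nontrivial right $\rat\fb$-module structure on $\catlie \odot I$ is compatible with the desired decomposition. Unlike the naive action by permutation of tensor factors on the PROP side, the right action constructed in Proposition \ref{prop:delta1_catlie_bimodule} is subtle, so the identification of $\catlie \odot I$ as a sum of bimodule-projective summands must be carried out with care, exploiting the Schur correspondence together with the explicit form of the convolution $\odot$. Once this compatibility is secured, the proof is formal.
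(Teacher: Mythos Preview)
Your overall plan is correct and matches the paper's: the exact sequence is definitional, and everything reduces to showing that $\catlie$ and $\catlie \odot I$ are projective as left $\catlie$, right $\rat\fb$-bimodules. However, your perceived ``main obstacle'' rests on a misreading. The right $\catlie$-module structure of Proposition~\ref{prop:delta1_catlie_bimodule} is constructed precisely as an \emph{extension} of the pre-existing right $\rat\fb$-structure on $\delta^{(1)}\catlie \cong \catlie \odot I$ (see the opening sentence of that proof). Restricting back to $\rat\fb$ therefore recovers the standard convolution action, and there is nothing subtle to verify.

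The paper then shortcuts the bimodule projectivity check entirely. Since we work over $\rat$, each $\rat\sym_n$ is semisimple, so projectivity as a left $\catlie$, right $\rat\fb$-bimodule reduces to projectivity of the underlying left $\catlie$-module. For $\catlie$ this is immediate from Yoneda (each $\catlie(n,-)$ is projective); for $\catlie \odot I$ one observes that $(\catlie \odot I)(n,-)$ is, as a left $\catlie$-module, a finite direct sum of copies of $\catlie(n-1,-)$, indexed by the $(n-1)$-element subsets of $\mathbf{n}$. Your more explicit bimodule decomposition would also work once the confusion about the right action is cleared up, but the semisimplicity reduction is cleaner and avoids the bookkeeping you were anticipating.
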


Theoretically this is a pleasing statement, giving a relationship between the underlying structures of $\homol_0$ and $\homol_1$. However, on the computational level it does not give any additional information. From this viewpoint, Corollary \ref{COR} is more significant, together with the further structure derived from Theorem \ref{THM:A}.

\tableofcontents

\section{Background}
\label{sect:background}

The purpose of this section is to review the theory of \cite{P_analytic} that give models for the appropriate functors on free groups. This uses the category of left $\catlie$-modules, where $\catlie$ is the category associated to the Lie operad $\lie$. Then we review the theory of \cite{P_outer}, which identified the structure corresponding to outer functors on free groups, introduced in the author's work with Vespa, \cite{PV}. 

Throughout we work over the field $\rat$ of rational numbers and $\qmod$ denotes the category of $\rat$-vector spaces.  
All undecorated tensor products $\otimes$ denote $\otimes_\rat$.

\subsection{$\fb$-modules and more}
The category of finite sets and bijections is denoted $\fb$, which has small skeleton given by the objects $\mathbf{n} = \{1, \ldots , n \}$, for $n \in \nat$. Since $\fb$ is a groupoid, there is an isomorphism of categories $\fb\cong \fb\op$ induced by taking the inverse. The $\rat$-linearization of $\fb$ is denoted $\rat \fb$. 

The category $\smodug$ of $\rat \fb$-modules is the category of $\rat$-linear functors from  $\rat\fb$ to $\qmod$ (this is equivalent to the category of functors from $\fb$ to $\qmod$). Likewise one has the category $\smodopug$ of  $\rat \fb\op$-modules (or {\em right} $\rat \fb$-modules) and $\fbbimod$, the category of $\rat\fb$-bimodules. Clearly $\smodopug$ is equivalent to $\smodug$, via the isomorphism $\fb \cong \fb\op$.

\begin{rem}
The terminology $\rat\fb$-modules is used here for consistency, since we also work with categories of modules over more general $\rat$-linear categories. (In the literature, $\smodug$ is often known as the category of $\fb$-modules.)
\end{rem}

Recall that the functor $\otimes_\fb : \smodopug \times \smodug \rightarrow \qmod$ is defined for a right $\rat\fb$-module $M$ and a left $\rat\fb$-module $N$ by 
$
M \otimes_\fb N = \bigoplus_{n \in \nat} M (\mathbf {n}) \otimes_{\sym_n} N (\mathbf{n})$,
 where  $\sym_n = \mathrm{Aut} (\mathbf{n})$.

\begin{exam}
For $V$ a $\rat$-vector space, one has the associated $\rat\fb$-module $\underline{V}$ given by $\underline{V} (\mathbf{n}) := V^{\otimes n}$, $n \in \nat$, with the symmetric group $\sym_n$ acting by place permutations. This construction is natural in $V$. 

Then, given a right $\rat\fb$-module $M$, one can form $M \otimes_\fb \underline{V}$, considered as an endofunctor of $\rat$-vector spaces:
\[
V \mapsto
M \otimes_\fb \underline{V} = \bigoplus_{n \in \nat} M(\mathbf{n}) \otimes_{\sym_n} V^{\otimes n}.
\]
This is the Schur functor $V \mapsto M(V)$  associated to $M$; it is an analytic functor of $V$, expressed as the sum of the homogeneous polynomial functors $V \mapsto M(\mathbf{n}) \otimes_{\sym_n} V^{\otimes n}$, $n \in \nat$. One can recover $M$ from this Schur functor; we refer to this as the Schur correspondence. (See \cite[Appendix I.A]{MR3443860} for this and more.)
\end{exam}

The convolution product of left $\rat\fb$-modules is denoted $\odot$;  for two $\rat\fb$-modules $M, N$ and a finite set $S$, this is given by 
\[
M \odot N\  (S) = \bigoplus_{S_1 \amalg S_2 = S} M(S_1) \otimes N(S_2), 
\]
where the sum is over ordered decompositions of $S$ into two subsets.  This gives a symmetric monoidal structure $(\smodug, \odot, \rat)$, where $\rat$ is considered as a left $\rat\fb$-module supported on $\mathbf{0} = \emptyset$. Likewise one has the symmetric monoidal structure $(\smodopug, \odot, \rat)$.

\begin{exam}
\label{exam:fb_schur}
Let $M$, $N$ be right $\rat\fb$-modules; the Schur functor associated to the convolution product $M \odot N$ is naturally isomorphic to the tensor product of the Schur functors of $M$ and $N$: 
\[
(M \odot N) (V)   \cong 
M(V)  \otimes 
N(V).
\]

Write $\circ$ for the composition product for $\rat\fb\op$-modules, which gives the monoidal structure $(\smodopug, \circ, I)$, where $I$ is $\rat$ supported on $\mathbf{1}$  (see \cite[Section 5.2]{LV}, for example). 
 The Schur functor associated to $I$ is the identity functor $I(V) = V$. The Schur functor associated to $M \circ N$ corresponds to the composite:
 \[
 (M \circ N) (V) \cong M (N (V)).
 \]
 
By the Schur correspondence, these identifications encode the monoidal structure  $(\smodopug, \circ, I)$. Since operads are, by definitions, monoids in $(\smodopug, \circ, I)$, the Schur correspondence is a useful tool in studying these; for example, this is employed in Example \ref{exam:gbar} below.
\end{exam}

\subsection{Introducing $\cat\lie$}

Write $\lie$ for the Lie operad (in $\rat$-vector spaces) and $\catlie$ for the associated PROP. Thus $\catlie$ is a $\rat$-linear category with set of objects $\nat$, and with symmetric monoidal structure $\boxplus$ that corresponds to addition on objects. Explicitly, for $m,n \in \nat$, one has 
\[
\catlie (m,n) = \bigoplus_{f \in \hom_{\fs}(\mathbf{m}, \mathbf{n})} \bigotimes_{i=1} ^n \lie (|f^{-1} (i)|),
\]
where the sum is over $\hom_{\fs}(\mathbf{m}, \mathbf{n})$, the set of  surjective set maps from $\mathbf{m}$ to $\mathbf{n}$. In particular, $\catlie (m,n)$ is zero if $n>m$ and is isomorphic (as a $\rat$-algebra) to $\rat \sym_n$ if $m=n$. Thus, forgetting structure, $\catlie$ is a $\rat\fb$-bimodule ($\sym_n$ acts on the left on $\catlie (m,n)$ and $\sym_m$ on the right, and these actions commute).

The category of left $\catlie$-modules is denoted $\lmod$. This is equivalent to the category of $\rat$-linear functors from $\catlie$ to $\qmod$. Likewise one has the category $\rmod$ of right $\catlie$-modules  and the category $\bimod$ of $\catlie$-bimodules.

\begin{exam}
\label{exam:catlie_bimodule}
By restriction of structure,  $\catlie$ is canonically a $\catlie$-bimodule.
\end{exam}

\begin{rem}
The category of right $\catlie$-modules is equivalent to the category of right $\lie$-modules defined with respect to the monoidal structure $(\smodopug, \circ , I)$ (see \cite[Proposition 1.2.6]{KM}).
\end{rem}

\begin{exam}
\label{exam:gbar}
For $\mathfrak{g}$ a Lie algebra, the left $\rat\fb$-module $\underline{\g}$ given by $n \mapsto \g^{\otimes n}$ has a canonical left $\catlie$-module structure induced by the Lie algebra structure of $\g$ (see \cite[Proposition 5.4.2]{LV}, for example).

Consider $\catlie$ as a left $\catlie$, right $\rat\fb$ bimodule (i.e., restricting the right $\catlie$-module structure given by Example \ref{exam:catlie_bimodule} along $\rat \fb \rightarrow \catlie$ induced by the unit $I \rightarrow \lie$). One can then form the Schur functor 
 $V \mapsto  \catlie \otimes_\fb \underline{V}$; this takes values in left $\catlie$-modules. 
 This identifies  with the left $\catlie$-module $\underline{\liealg(V)}$, where $\liealg (V)$ is the free Lie algebra on $V$ (which has underlying  functor $\lie \otimes _\fb \underline{V}$). In particular, for $n\in \nat$:
 \[
 \catlie (-, n) \otimes_\fb \underline{V} \cong \liealg (V)^{\otimes n}.
\]

This can be used to understand the right $\catlie$-module structure as follows. Composition in $\catlie$ corresponds to a morphism of $\fb$-bimodules 
\[
\catlie \otimes_\fb \catlie \rightarrow \catlie,
\]
(variance dictates how $\otimes_\fb$ is formed). Applying $- \otimes_\fb \underline{V}$ this gives 
\[
(\catlie \otimes_\fb \catlie) \otimes_\fb \underline{V} \cong \catlie \otimes _\fb \underline{\liealg (V)}  \rightarrow 
\catlie\otimes_\fb \underline{V}
\cong 
\underline{\liealg (V)},
\]
using associativity to give the first isomorphism. Now, $\catlie \otimes _\fb \underline{\liealg (V)}$ is naturally isomorphic to $\underline{\liealg (\liealg (V))}$, by repeating the identification, hence the structure map is a morphism of left $\catlie$-modules 
\[
\underline{\liealg (\liealg (V)) } \rightarrow \underline{\liealg(V)}. 
\]
This is  the morphism of left $\catlie$-modules induced by the canonical Lie algebra map
\[
\liealg (\liealg (V)) \rightarrow \liealg (V)
\]
that corresponds to the composition $\lie \circ \lie \rightarrow \lie$ of the Lie operad. 
\end{exam}

\subsection{Convolution for right $\catlie$-modules}

The convolution product on $\smodopug$ upgrades to define a symmetric monoidal structure   $(\rmod, \odot, \rat)$   (see \cite[Proposition 1.6.3]{KM}, for example, which references \cite{MR1665330}).

\begin{exam}
\label{exam:conv_right_catlie}
For a right $\catlie$-module $M$, after passage to the associated Schur functor, similarly to the analysis in Example \ref{exam:gbar}, the structure map is encoded by a natural transformation (with respect to $V$)
\[
M (\liealg (V)) \rightarrow M(V)
\]
that is unital and associative with respect to the right $\lie$-action, in the appropriate sense.

For $N$ a second right $\catlie$-module, the convolution product (as right $\catlie$-modules) $M \odot N$ has structure natural transformation 
\[
(M \odot N) (\liealg (V)) \rightarrow (M \odot N) (V) 
\]
that is given by the tensor product of the structure maps $M (\liealg (V)) \rightarrow M(V)$ and $N (\liealg (V)) \rightarrow N(V)$, using the natural isomorphisms $(M \odot N) (\liealg (V)) \cong M(\liealg (V)) \otimes N(\liealg (V))$ and $(M \odot N) (V) \cong M(V) \otimes N(V)$.
\end{exam}

\begin{rem}
\label{rem:conv_left_catlie_modules}
There is also a convolution product for left $\catlie$-modules; this is used in \cite{P_analytic} and \cite{P_outer}. This is not exploited here.
\end{rem}

The following structure is sufficient for current purposes:

\begin{prop}
\label{prop:convolution_bimod_rmod}
The convolution product $\odot$ on $\rmod$ induces a functor 
\[
\bimod \times \rmod \rightarrow \bimod
\]
where, for $n \in \nat$, a bimodule $B$ and a right module $M$, $B \odot M \ (-,n):= B (-,n) \odot M$ as a right $\catlie$-module, with the left $\catlie$-module structure induced by that  of $B$. 
\end{prop}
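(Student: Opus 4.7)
The plan is to reinterpret $\bimod$ as the category of $\rat$-linear functors $\catlie \to \rmod$ and then exploit the fact that $- \odot M$ is an endofunctor of $\rmod$ for any fixed $M \in \rmod$. Under this reinterpretation, a bimodule $B$ corresponds to the functor $n \mapsto B(-,n)$: each $B(-,n)$ is a right $\catlie$-module via the right action of $B$, while functoriality in $n$ encodes the left action. The bimodule axioms are precisely the statement that, for each morphism $f \in \catlie(n,n')$, the induced map $B(-,n) \to B(-,n')$ is a morphism of right $\catlie$-modules.

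Given this reinterpretation, one uses that $- \odot M : \rmod \to \rmod$ is a $\rat$-linear endofunctor, which is immediate from the symmetric monoidal structure $(\rmod, \odot, \rat)$ recalled just before the proposition. Post-composition yields a functor
\[
\catlie \xrightarrow{\ B\ } \rmod \xrightarrow{\ - \odot M\ } \rmod,
\]
whose value at $n$ is $B(-,n) \odot M$. By construction this is a bimodule whose right action on each slice and left action induced from that on $B$ match the formulas in the statement. Functoriality of the assignment $(B,M) \mapsto B \odot M$ is then formal: a morphism $B \to B'$ of bimodules is a natural transformation of functors $\catlie \to \rmod$ and is preserved by post-composition with $- \odot M$, while a morphism $M \to M'$ of right $\catlie$-modules induces a natural transformation $- \odot M \Rightarrow - \odot M'$ of endofunctors of $\rmod$ and hence a morphism $B \odot M \to B \odot M'$ of bimodules.

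I do not anticipate a serious obstacle: the proposition is essentially a bookkeeping statement that $(\rmod, \odot, \rat)$ respects the structure of left $\catlie$-actions. The only point needing care is that the prescribed left action genuinely commutes with the right action built into $B(-,n) \odot M$; but this is automatic since $- \odot M$ sends morphisms of right $\catlie$-modules to morphisms of right $\catlie$-modules, and associativity and unit of the resulting left action are inherited term-wise from $B$.
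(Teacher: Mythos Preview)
Your proof is correct and takes a genuinely different route from the paper's. The paper proceeds via the Schur correspondence: it passes to the associated Schur functors $B \otimes_\fb \underline{V}$ (with values in left $\catlie$-modules) and $M(V)$, forms their tensor product, and checks by hand that the tensor product of the right-module structure maps $B \otimes_\fb \underline{\liealg(V)} \to B \otimes_\fb \underline{V}$ and $M(\liealg(V)) \to M(V)$ is unital, associative, and a morphism of left $\catlie$-modules. Your argument instead stays entirely on the categorical level: you identify $\bimod$ with $\rat$-linear functors $\catlie \to \rmod$ and observe that post-composition with the endofunctor $- \odot M$ of $\rmod$ immediately yields another such functor, with functoriality in both variables formal. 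Your approach is cleaner and more direct for this particular statement; the paper's approach has the advantage of setting up the Schur-functor language (cf.\ Example~\ref{exam:conv_right_catlie}) that is used repeatedly in the explicit computations later on, so the proof here doubles as a warm-up for that machinery.
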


\begin{proof}
Consider the respective Schur functors and their structure morphisms, as in Example \ref{exam:conv_right_catlie}. For $B$, one has  $B \otimes _\fb \underline{V}$; this takes values naturally in left $\catlie$-modules and the structure morphism 
\[
B \otimes _\fb \underline{\liealg(V)}
\rightarrow
B \otimes _\fb \underline{V}
\]
is a morphism of left $\catlie$-modules.  For $M$, one has the structure morphism $M (\liealg (V)) \rightarrow M(V)$ (in $\rat$-vector spaces). 

One can form $(B \otimes _\fb \underline{V}) \otimes M(V)$ and equip this with the tensor product of the above structure morphisms giving 
\[
(B \otimes _\fb \underline{\liealg(V)}) \otimes M(\liealg (V)) 
\rightarrow 
(B \otimes _\fb \underline{V}) \otimes M(V).
\]
This is natural in $V$ and unital and associative as required. Moreover, it is clearly a morphism of left $\catlie$-modules for the action arising from $B$. Hence this encodes a $\catlie$-bimodule. One checks that this corresponds to the structure in the statement.  
\end{proof}

\subsection{The shift functor $\delta$}

The symmetric monoidal structure $\boxplus$ of $\catlie$ induces the $\rat$-linear functor 
\[
- \boxplus 1 : \catlie \rightarrow \catlie.
\]
Thinking of left $\catlie$-modules as $\rat$-linear functors from $\catlie$ to $\qmod$, 
precomposition with $- \boxplus 1$ gives the exact functor 
\[
\delta : \lmod \rightarrow \lmod. 
\]
Thus, for a left $\catlie$-module $M$ and $n \in \nat$,  $(\delta M) (n) = M (n+1)$. (Here and elsewhere, for an $\rat\fb$-module $M$ and for $s \in \nat$, $M(s)$ is synonymous with $M(\mathbf{s})$.)

If $B$ is a $\catlie$-bimodule, $\delta$ can be applied with respect to the left $\catlie$-module structure and this yields $\delta B$, which is a $\catlie$-bimodule, giving  the functor 
\[
\delta : \bimod \rightarrow \bimod.
\]

\begin{lem}
\label{lem:delta_catlie}
There is an isomorphism of $\catlie$-bimodules
\[
\delta \catlie \cong \catlie \odot \lie,
\]
using the structure provided by Proposition \ref{prop:convolution_bimod_rmod} on the right hand side.
\end{lem}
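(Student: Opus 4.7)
The plan is to construct the isomorphism explicitly on the underlying $\rat\fb$-bimodules from the combinatorics of surjections, and then verify each of the two $\catlie$-actions separately.

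For the underlying isomorphism, I would use that a surjection $f:\mathbf{m}\twoheadrightarrow\mathbf{n+1}$ is equivalent data to its fibre $S_2:=f^{-1}(n+1)$ together with the restricted surjection $\bar f:\mathbf{m}\setminus S_2\twoheadrightarrow\mathbf{n}$. Combined with the evident factorization $\bigotimes_{i=1}^{n+1}\lie(|f^{-1}(i)|)\cong\bigl(\bigotimes_{i=1}^{n}\lie(|\bar f^{-1}(i)|)\bigr)\otimes\lie(S_2)$, the explicit formula for $\catlie(m,n+1)$ rewrites as
\[
\catlie(m,n+1)\;\cong\;\bigoplus_{S_1\sqcup S_2=\mathbf{m}}\catlie(S_1,n)\otimes\lie(S_2)\;=\;\bigl(\catlie(-,n)\odot\lie\bigr)(m),
\]
which is $(\catlie\odot\lie)(m,n)$ in the convention of Proposition \ref{prop:convolution_bimod_rmod}. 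This construction is manifestly natural in $\mathbf{m}$, hence is already an isomorphism of right $\rat\fb$-modules.

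For left $\catlie$-equivariance, a morphism $\gamma\in\catlie(n,n')$ acts on $\delta\catlie$ by postcomposition with $\gamma\boxplus 1$. On the summand $\catlie(S_1,n)\otimes\lie(S_2)$ this acts as postcomposition by $\gamma$ on the first factor and as the identity on the second, which is exactly the left action on $\catlie(-,n)\odot\lie$ prescribed by Proposition \ref{prop:convolution_bimod_rmod}. Right $\catlie$-equivariance is the more delicate point, and for it I would pass through the Schur correspondence. By Example \ref{exam:gbar}, $\catlie(-,n+1)\otimes_\fb\underline V\cong\liealg(V)^{\otimes(n+1)}$ and the right $\catlie$-structure corresponds to the $(n+1)$-fold tensor product of the Lie-structure map $\liealg(\liealg(V))\to\liealg(V)$. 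By Examples \ref{exam:fb_schur} and \ref{exam:conv_right_catlie}, the Schur functor of $\catlie(-,n)\odot\lie$ is $\liealg(V)^{\otimes n}\otimes\liealg(V)\cong\liealg(V)^{\otimes(n+1)}$, and its right $\catlie$-structure is the tensor product of the two structure maps — again the $(n+1)$-fold tensor of Lie structure. A direct inspection shows that the combinatorial bijection of Step 1, after tensoring with $\underline V$ and taking $\sym_m$-coinvariants, is exactly the standard associativity isomorphism on Schur functors; by the Schur correspondence this upgrades to right $\catlie$-equivariance.

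The main obstacle is precisely this right $\catlie$-equivariance: a head-on verification in $\catlie$ itself is awkward because a right composition by an element of $\catlie(m',m)$ does not respect the partition $\mathbf{m}=S_1\sqcup S_2$ in any naive sense, and the operadic grafting mixes contributions from the two pieces. Passing to the Schur picture, where the right action on each side is visibly the same tensor product of Lie-structure maps, is the only clean route I see.
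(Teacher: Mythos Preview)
Your proposal is correct and supplies exactly the kind of explicit argument the paper omits: the paper's own proof is a single line (``a straightforward verification using the construction of $\catlie$''), so there is no real comparison of strategies to make. Your combinatorial identification of the underlying $\rat\fb$-bimodules and your check of the left $\catlie$-action are precisely the intended content of that phrase.

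One small comment on your last paragraph: the direct verification of right $\catlie$-equivariance is not as awkward as you suggest. Since right $\catlie$-modules are the same as right $\lie$-modules, it suffices to check compatibility with grafting a single element of $\lie(k)$ into the $i$th input; this visibly lands in the tensor factor ($\catlie(S_1,n)$ or $\lie(S_2)$) determined by whether $i\in S_1$ or $i\in S_2$, and that matches the convolution right action by construction. That said, your detour through the Schur correspondence is perfectly valid, is entirely in the spirit of the paper (cf.\ the proof of Proposition~\ref{prop:convolution_bimod_rmod} itself), and has the virtue of making the identification with $\liealg(V)^{\otimes(n+1)}$ transparent.
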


\begin{proof}
This is a straightforward verification using the construction of $\catlie$.
\end{proof}

\subsection{The morphisms $\mu( n)$}

The purpose of this section is to introduce the important sequence of morphisms 
\[
\mu (n) \in \catlie (n+1, n)
\]
for $n \in \nat$.  These were introduced and exploited in \cite{P_outer}.

Recall that $\lie (1) = \rat$, with canonical generator given by the unit; $\lie(2) \cong \mathrm{sgn}_2$, the signature representation of $\sym_2$, with underlying vector space $\rat$, generated by $[-,-]$. 

By construction, $\catlie (n+1,n) = \bigoplus_{f \in \hom_{\fs} (\mathbf{n+1}, \mathbf{n})} \bigotimes \lie (|f^{-1} (i)|)$. Now, a surjection $f : \mathbf{n+1} \twoheadrightarrow \mathbf{n}$ has exactly one fibre of cardinal two, all the others having cardinal one. Using the above generators, the summand indexed by $f$ is $\rat$, with  a canonical generator that we denote by $\mu_f (n)$. 

\begin{defn}
\label{defn:lambda}
Set $\mu (0)=0$ and, for $n \geq 1$, define $\mu (n)\in \catlie (n+1, n)$ to be the element
\[
\mu (n) := \sum_{\substack{f \in \hom_{\fs} (\mathbf{n+1}, \mathbf{n}) \\ {f|_{\mathbf{n}} = \mathrm{id}_{\mathbf{n}}}}} \mu_f (n),
\]
where the sum is taken over surjections $\mathbf{n+1} \cong \mathbf{n} \amalg \mathbf{1} \twoheadrightarrow \mathbf{n}$ that are the identity when restricted to $\mathbf{n} \subset \mathbf{n+1}$.
\end{defn}

\begin{rem}
\label{rem:box_notation}
It is convenient to use `box notation' to represent the elements $\mu (n)$, using standard conventions for representing morphisms in PROPs (with entries at the top and exits at the bottom). Such notation is used in the study of Jacobi diagrams (see  \cite[Example 3.2]{MR4321214} for example). 

Using this, $\mu (3)$ is represented by:
\begin{center}
 \begin{tikzpicture}[scale = .2]
 \draw (1,1) -- (1,-3);
 \draw (3,1) -- (3,-3);
 \draw (5,1) -- (5,-3);
\draw [rounded corners] (7,1) -- (7,-1) -- (6,-1);
 \draw [fill= lightgray] (0,0) -- (6,0) -- (6, -2)  -- (0, -2) -- cycle;
 \node at (7,-3) {,};
 \end{tikzpicture}
 \end{center}
which is shorthand for
\begin{center}
 \begin{tikzpicture}[scale = .2]
 \draw (1,1) -- (1,-3);
 \draw (3,1) -- (3,-3);
 \draw (5,1) -- (5,-3);
\draw [rounded corners] (7,1) -- (7,-1) -- (5,-1);
\draw [fill=black] (5,-1) circle (0.2);
\node at (10,-1) {$+$};
\draw (13,1) -- (13,-3);
 \draw (15,1) -- (15,-3);
 \draw (17,1) -- (17,-3);
 \draw [fill=white, white] (17,-1) circle (0.2);
\draw [rounded corners] (19,1) -- (19,-1) -- (15,-1);
\draw [fill=black] (15,-1) circle (0.2);
\node at (22,-1) {$+$};
\draw (25,1) -- (25,-3);
 \draw (27,1) -- (27,-3);
 \draw (29,1) -- (29,-3);
 \draw [fill=white, white] (29,-1) circle (0.2);
  \draw [fill=white, white] (27,-1) circle (0.2);
\draw [rounded corners] (31,1) -- (31,-1) -- (25,-1);
\draw [fill=black] (25,-1) circle (0.2);
\node at (31,-3) {,};
 \end{tikzpicture}
 \end{center}
 where $\bullet$ represents $[-,-] \in \lie (2)$. 
 
These diagrams are understood to be planar. Hence, for example, the antisymmetry relation for $[-,-]$ is represented by:
\begin{center}
 \begin{tikzpicture}[scale = .2]
 \draw (5,1) -- (5,-3);
\draw [rounded corners] (7,1) -- (7,-1) -- (5,-1);
\draw [fill=black] (5,-1) circle (0.2);
\node at (10,-1) {\quad $=$ \quad $-$};
 \draw (17,1) -- (17,-3);
 \draw [fill=white, white] (17,0) circle (0.2);
\draw [rounded corners] (19,1) -- (19,0) -- (16,0) -- (16, -1) -- (17,-1);
\draw [fill=black] (17,-1) circle (0.2);
\node at (19,-3) {.};
 \end{tikzpicture}
 \end{center}
\end{rem}

 The  family  $\{ \mu(n) \ | \ n \in \nat\}$ satisfies  the following key `centrality' property:
 
 \begin{prop}
 \cite{P_outer}
 For $n,t \in \nat$ and $\phi \in \catlie (n, t)$ (so that $\phi \boxplus 1 \in \catlie (n+1, t+1)$), 
 \[
 \phi \circ \mu (n) = \mu (t) \circ (\phi \boxplus 1) 
 \]
 in $\catlie (n+1, t)$.
 \end{prop}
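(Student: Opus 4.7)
The plan is to prove the identity by induction on the construction of $\phi$ from the generators of $\catlie$. As a symmetric monoidal $\rat$-linear category, $\catlie$ is generated by the bracket $[-,-] \in \catlie(2,1) = \lie(2)$ together with the permutations $\sym_k \subset \catlie(k,k)$, so every $\phi \in \catlie(n,t)$ is built by iterated composition $\circ$ and monoidal product $\boxplus$ of these. I will therefore show that the centrality identity is stable under both $\circ$ and $\boxplus$, and then verify it on the two types of generator.

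Stability under $\circ$ is immediate: given $\phi : n \to t$ and $\psi : t \to s$ both satisfying centrality, functoriality of $\boxplus$ for the final equality yields
\[
(\psi\phi) \circ \mu(n) = \psi \circ \mu(t) \circ (\phi \boxplus 1) = \mu(s) \circ (\psi \boxplus 1) \circ (\phi \boxplus 1) = \mu(s) \circ \bigl((\psi\phi) \boxplus 1\bigr).
\]
Stability under $\boxplus$ will require decomposing $\mu(n+n')$ according to whether the new strand is bracketed with one of the first $n$ or with one of the last $n'$ inputs. Inspecting Definition \ref{defn:lambda} gives
\[
\mu(n+n') = (\mu(n) \boxplus \id_{n'}) \circ \rho_1 \; + \; (\id_n \boxplus \mu(n')) \circ \rho_2
\]
for suitable routing permutations $\rho_1, \rho_2 \in \sym_{n+n'+1}$ that move the $(n+n'+1)$-th input adjacent to its intended bracketing partner. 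Applying $\phi \boxplus \phi'$ on the left and invoking centrality for $\phi$ and $\phi'$ separately then gives centrality for $\phi \boxplus \phi'$ after a routine bookkeeping calculation.

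For the base cases, when $\phi = \sigma \in \sym_n$, the identity $\sigma \circ \mu(n) = \mu(n) \circ (\sigma \boxplus 1)$ follows at once from Definition \ref{defn:lambda}: the sum over surjections $\mathbf{n+1} \to \mathbf{n}$ that restrict to the identity on $\mathbf{n}$ is $\sym_n$-equivariant on either side. When $\phi = [-,-] \in \catlie(2,1)$, expanding $\mu(2)$ as the sum of its two summands reduces the claim $[-,-] \circ \mu(2) = \mu(1) \circ ([-,-] \boxplus 1)$ to the equality
\[
[[x_1, v], x_2] + [x_1, [x_2, v]] = [[x_1, x_2], v],
\]
which is precisely the Jacobi identity (after reindexing).

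The principal obstacle will be the $\boxplus$ step: one must accurately identify the routing permutations $\rho_1, \rho_2$ and verify that the two pieces of the decomposition of $\mu(n+n')$ pair correctly with the two tensor factors $\phi$ and $\phi'$ of the monoidal product. The composition step and the two base cases, by contrast, are formal once this framework is in place.
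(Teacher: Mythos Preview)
Your approach is correct and essentially the same as the paper's: reduce to generators of $\catlie$ as a PROP, verify the permutation case from the definition of $\mu(n)$, and verify the bracket case via the Jacobi identity, with closure under composition being formal. The only organizational difference is that the paper bypasses your general $\boxplus$-closure step by observing that every morphism factors through permutations and morphisms of the specific form $\mu(1) \boxplus \id_{t-1}$, and then checks the identity directly for that form (this is exactly your Jacobi calculation carried along with $t-1$ passive strands), so your decomposition of $\mu(n+n')$ and the routing-permutation bookkeeping, while correct, can be avoided.
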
 
 
 \begin{proof}
 This is proved in \cite{P_outer}; for the reader's convenience, a proof is given here. 
 
Since $\catlie (n,t)=0$ if $t=0$ or if $n<t$, we may suppose that $n \geq t >0$. In particular, both $\mu (n)$ and $\mu (t)$ are non-zero. Now, as a PROP, $\catlie$ is generated by $\mu (1)\in \catlie (2,1)$ corresponding to $[-,-] \in \lie (2)$. This allows one to reduce to considering the following cases: 
\begin{enumerate} 
\item 
$n=t$, so that $\phi \in \catlie (n,n) = \rat \sym_n$;
\item
$n=t+1$ and $\phi = \mu (1) \boxplus (t-1)$.
\end{enumerate} 

In the first case, it is straightforward to check the required equivariance $\phi \circ \mu (n) = \mu (n) \circ (\phi \boxplus 1)$. (Indeed, $\mu(n)$ was defined so as to ensure this.)

The second case follows from the Jacobi identity. The generalized form can be denoted diagrammatically (in the case $t=2$):
\begin{center}
 \begin{tikzpicture}[scale = .2]
 \draw (1,1) -- (1,-2) -- (2,-3) -- (2,-4);
 \draw (3,1) -- (3,-2)-- (2,-3);
 \draw [fill=black] (2,-3) circle (.2);
 \draw (5,1) -- (5,-4);
\draw [rounded corners] (7,1) -- (7,-1) -- (6,-1);
 \draw [fill= lightgray] (0,0) -- (6,0) -- (6, -2)  -- (0, -2) -- cycle;
 \node at (9,-1) {$=$};
  \draw (12,1) -- (13,0) -- (13,-4);
   \draw (14,1) -- (13,0);
\draw [fill=black] (13,0) circle (.2);
 \draw (16,1) -- (16,-4);
\draw [rounded corners] (18,1) -- (18,-2) -- (17,-2);
 \draw [fill= lightgray] (11,-1) -- (17,-1) -- (17, -3)  -- (11, -3) -- cycle;
 \node at (9,-1) {$=$};
 \end{tikzpicture}
 \end{center}
 where $\bullet$ denotes the Lie bracket. (The reader is encouraged to verify that this follows from the Jacobi identity and that this gives the required identity.)
 \end{proof}
 
 \begin{cor}
 \cite{P_outer}
 \label{cor:mut}
 The family $\{ \mu  (n) \ | \ n \in \nat\}$, induces a natural transformation of endofunctors of $\lmod$:
 \[
 \mut : \delta \rightarrow \id.
 \]
 Evaluated on $n \in \nat$, for a left $\catlie$-module $M$, this is the morphism
 $
\mut(n) : M (n+1) \rightarrow M(n) 
 $ 
  induced by $\mu (n)$.
 \end{cor}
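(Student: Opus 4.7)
The plan is to define, for each left $\catlie$-module $M$ and each $n \in \nat$, the morphism $\mut(n)_M := M(\mu(n)) : M(n+1) \rightarrow M(n)$, viewing $M$ as a $\rat$-linear functor $\catlie \rightarrow \qmod$, and then to check two things: (i) that, for each $M$, these assemble into a morphism $\mut_M : \delta M \rightarrow M$ in $\lmod$, and (ii) that $\mut$ is natural in $M$.

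For (i), the key point is to unpack the left $\catlie$-module structure on $\delta M$. Since $\delta M = M \circ (- \boxplus 1)$, the action of any $\phi \in \catlie(n, t)$ on $\delta M$ is the map $M(\phi \boxplus 1) : M(n+1) \rightarrow M(t+1)$. The compatibility of the collection $\{\mut(n)_M\}_{n \in \nat}$ with the $\catlie$-actions is therefore the assertion that
\[
M(\phi) \circ M(\mu(n)) = M(\mu(t)) \circ M(\phi \boxplus 1),
\]
which, by functoriality of $M$, reduces to the identity $\phi \circ \mu(n) = \mu(t) \circ (\phi \boxplus 1)$ in $\catlie(n+1, t)$. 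This is precisely the centrality property established in the preceding proposition. Since $\catlie$ is generated (as a $\rat$-linear category) by morphisms of the form occurring in that proposition, it suffices to verify compatibility for such $\phi$, and no further check is needed.

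For (ii), naturality in $M$ is immediate: a morphism $f : M \rightarrow N$ in $\lmod$ is by definition a natural transformation of $\rat$-linear functors $\catlie \rightarrow \qmod$, so the naturality square of $f$ applied to the morphism $\mu(n) \in \catlie(n+1, n)$ is exactly the required identity $f(n) \circ M(\mu(n)) = N(\mu(n)) \circ f(n+1)$.

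The entire content of the corollary is thus packaged into the centrality identity of the preceding proposition, together with the observation that $\delta$ is precomposition with $- \boxplus 1$; there is no genuine obstacle, and the remainder is a direct translation into the language of $\catlie$-modules.
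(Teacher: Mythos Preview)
Your proof is correct and follows the same route as the paper intends: the corollary is stated as an immediate consequence of the preceding centrality proposition, and your argument makes explicit exactly how that deduction goes. One small remark: the centrality proposition is already stated for \emph{all} $\phi \in \catlie(n,t)$, so your sentence about reducing to generators is unnecessary (that reduction happens inside the proof of the proposition, not here); otherwise the argument is exactly what is required.
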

 
\begin{rem}
\label{rem:flie}
In \cite{P_outer}, the category $\lmod$ is denoted $\flie$. Then $\flie^\mu$ is defined to be the full subcategory of objects for which $\mut$ is zero. As is established by \cite{P_outer}, this category is of significant interest, due to its relationship with outer functors on free groups. 
\end{rem}

\begin{exam}
\label{exam:mut_bimodule}
One can apply  $\mut$ in $\catlie$-bimodules. In particular, this gives the natural transformation of $\catlie$-bimodules:
\[
\mut : \delta \catlie \rightarrow \catlie.
\]
 Consider the induced morphism between the associated Schur functors:
\[
\mut : \delta \catlie \otimes_\fb \underline{V} \rightarrow \catlie \otimes _\fb \underline{V}, 
\]
which is a natural transformation of left $\catlie$-modules. This has the form
\[
\mut (V) : \underline{\liealg (V)} \otimes \liealg (V) \rightarrow \underline{\liealg (V)}.
\]
In arity $n$ (with respect to the left $\catlie$-module structure), this identifies as 
$
\liealg (V) ^{\otimes n} \otimes \liealg(V) \rightarrow \liealg(V)^{\otimes n},
$
corresponding to  the $n$th tensor product of the adjoint representation (with action from the right).
\end{exam} 

\begin{rem}
\label{rem:Lie_action}
The fact that the tensor product of the adjoint action is a Lie algebra action reflects a further fundamental property of the morphisms $\mu(n)$, $1 \leq n \in \nat$.

This can be represented diagrammatically as follows. Consider precomposing with the Lie bracket on the distinguished right hand entry (represented by the $\bullet$ in the diagram):

\begin{center}
 \begin{tikzpicture}[scale = .2]
 \draw (1,1) -- (1,-3);
 \draw [dotted] (3,1) -- (3,-3);
 \draw (5,1) -- (5,-3);
 \draw (7,1) -- (7.5,0) -- (8,1);
 \draw [fill=black] (7.5,0) circle (.2); 
\draw [rounded corners] (7.5,0) -- (7.5,-1) -- (6,-1);
 \draw [fill= lightgray] (0,0) -- (6,0) -- (6, -2)  -- (0, -2) -- cycle;
 \node at (8,-3) {.};
 \end{tikzpicture}
 \end{center} 
This is equal to  
 \begin{center}
 \begin{tikzpicture}[scale = .2]
 \draw (1,1) -- (1,-6);
 \draw [dotted] (3,1) -- (3,-6);
 \draw (5,1) -- (5,-6);
\draw [rounded corners] (7,1) -- (7,-1) -- (6,-1);
\draw [rounded corners] (8,1) -- (8,-4) -- (6,-4);
 \draw [fill= lightgray] (0,0) -- (6,0) -- (6, -2)  -- (0, -2) -- cycle;
 \draw [fill= lightgray] (0,-3) -- (6,-3) -- (6, -5)  -- (0, -5) -- cycle;
\node at (10, -2.5) {$-$};
 \draw (13,1) -- (13,-6);
 \draw [dotted] (15,1) -- (15,-6);
 \draw (17,1) -- (17,-6);
 \draw [rounded corners] (20,1) -- (20,-1) -- (18,-1);
 \draw [fill=white, white] (19,-1) circle (.2); 
\draw [rounded corners] (19,1) --  (19,-4) -- (18,-4);
 \draw [fill= lightgray] (12,0) -- (18,0) -- (18, -2)  -- (12, -2) -- cycle;
 \draw [fill= lightgray] (12,-3) -- (18,-3) -- (18, -5)  -- (12, -5) -- cycle;
 \node at (20,-6) {};
 \end{tikzpicture}
 \end{center} 
as can be checked directly from the definition of $\mu (n)$.
 \end{rem}

\section{The adjoint action complex as a DG category}
\label{sect:complex}

The purpose of this section is show that the complex which underlies the calculation of the Lie algebra homology 
 $
H_* (\liealg(V); \liealg(V)^{\otimes n})
$ 
for varying $n$ (naturally in $V$) has additional structure. Namely, it underlies a DG category $\dgcat$. This is proved in Theorem \ref{thm:DG_category}, which puts together the ingredients established in the intervening sections.

In  Section \ref{sect:ce}, $\dgcat$ is related to the universal Chevalley-Eilenberg complex $\ce_\lie$.

\subsection{$\catlie$ as a $\lie$-module}

Using the symmetric monoidal structure $(\rmod , \odot, \rat)$, $\lie$ is a Lie algebra in $\rmod$ (see, for example, \cite[Observation 9.1.3]{MR2494775}).

\begin{rem}
This can be seen by considering the associated Schur functor, as follows. Tautologically, the associated Schur functor  is $V \mapsto \liealg (V)$, which takes values in Lie algebras. Moreover, the structure map $\liealg (\liealg(V)) \rightarrow \liealg (V)$ is a morphism of Lie algebras. Putting these facts together gives the stated property.
\end{rem}

\begin{defn}
\label{defn:lie-modules}
A  $\catlie$-bimodule $B$ is a right $\lie$-module if it is equipped with a structure morphism in $\catlie$-bimodules
$
\alpha : B \odot \lie \rightarrow B
$ 
such that the following equality holds as morphisms of $\catlie$-bimodules $B \odot \lie \odot \lie \rightarrow B$:
\[
\alpha \circ (\id_B \odot [-,-] )
= 
\alpha \circ (\alpha \circ \id_\lie) - \alpha \circ (\alpha \circ \id_\lie) \circ (\id_B \circ \tau )
\]
where $[-,-] : \lie \odot \lie \rightarrow \lie$ is the Lie bracket  and $\tau : \lie \odot \lie \rightarrow \lie\odot \lie$ is given by the symmetry of $\odot$. 

Morphisms between such structures are defined in the obvious way, giving a category. 
\end{defn}

Consider the morphism of $\catlie$-bimodules 
\[
\mut : \delta \catlie \cong \catlie \odot \lie \rightarrow \catlie.
\]

\begin{prop}
\label{prop:mut_Lie_action}
The structure map $\mut : \catlie \odot \lie \rightarrow \catlie$ makes $\catlie$  a right $\lie$-module in $\catlie$-bimodules.
\end{prop}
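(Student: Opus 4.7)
The plan is first to dispose of the verification that $\mut : \catlie \odot \lie \rightarrow \catlie$ is a morphism of $\catlie$-bimodules: this follows from the centrality statement established immediately after Definition~\ref{defn:lambda}, combined with Corollary~\ref{cor:mut}, once one identifies $\catlie \odot \lie$ with $\delta \catlie$ via Lemma~\ref{lem:delta_catlie}. What remains is to verify the right Lie module axiom of Definition~\ref{defn:lie-modules}, namely that the two composites $\catlie \odot \lie \odot \lie \rightrightarrows \catlie$ in the statement agree.

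My preferred approach is to pass through the Schur correspondence, exploiting the naturality of all structures. By Example~\ref{exam:gbar}, the Schur functor associated to $\catlie$ (viewed as a left $\catlie$, right $\rat\fb$-bimodule) is $\underline{\liealg(V)}$, and as recorded in Example~\ref{exam:mut_bimodule} the map $\mut$ corresponds in arity $n$ to the $n$-fold tensor product of the right adjoint action $\liealg(V)^{\otimes n} \otimes \liealg(V) \rightarrow \liealg(V)^{\otimes n}$. Under this identification, using Example~\ref{exam:fb_schur} to transport $\catlie \odot \lie \odot \lie$ to the Schur picture, the two composites in the axiom become respectively the action of $[y,z] \in \liealg(V)$ on $\liealg(V)^{\otimes n}$ and the antisymmetrization in $y,z$ of two successive right actions. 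Their equality is exactly the statement that the adjoint action makes $\liealg(V)^{\otimes n}$ a right $\liealg(V)$-module, which is a formal consequence of the Jacobi identity together with the Leibniz rule for the diagonal action on a tensor product.

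The main obstacle is a bookkeeping one: one must verify that the convolution structure on $\rmod$ used in Definition~\ref{defn:lie-modules} transports faithfully under the Schur correspondence, so that the factor $\lie \odot \lie$ genuinely corresponds to $\liealg(V) \otimes \liealg(V)$ and the symmetry $\tau$ to the swap of tensor factors; once that is in place, the identity at the level of left $\catlie$-modules is equivalent to the bimodule identity, since the right $\catlie$-action plays no role in the axiom itself (only in ensuring $\mut$ is a bimodule map). As an alternative — and as an illuminating sanity check — one can give a direct diagrammatic verification using Remark~\ref{rem:Lie_action}: precomposing $\mu(n)$ with the Lie bracket on its distinguished $(n{+}1)$th entry yields, by the box-notation identity displayed there, precisely the difference of the two iterated compositions of $\mu(n)$'s demanded by the right Lie module axiom, bypassing the Schur transfer altogether.
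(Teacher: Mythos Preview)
Your proposal is correct. The paper's own proof is the one-sentence version: it simply invokes Remark~\ref{rem:Lie_action} (the diagrammatic identity showing that precomposing $\mu(n)$ with the bracket on the distinguished entry equals the antisymmetrized iterate), noting that this is exactly the right $\lie$-module axiom for the adjoint action. In other words, the paper's argument is precisely what you present as your \emph{alternative} route, bypassing the Schur transfer entirely. Your primary route through the Schur correspondence is a legitimate and more explicit unpacking of the same content---it makes transparent that the identity in question is nothing but the familiar fact that $\liealg(V)^{\otimes n}$ is a $\liealg(V)$-module under the diagonal adjoint action---at the cost of the bookkeeping you flag; the paper's terser diagrammatic appeal avoids that bookkeeping but leaves the reader to decode Remark~\ref{rem:Lie_action}.
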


\begin{proof}
This follows directly from the property of the morphisms $\mu (n)$ presented in Remark \ref{rem:Lie_action}, corresponding to the fact that $\mut$ is constructed from the right adjoint action of $\lie$. 
\end{proof}

Recall that there are inclusions of operads
$ 
\lie \hookrightarrow \ass \hookrightarrow \uass, 
$
where $\ass$ is the associative operad and $\uass$ encodes unital associative algebras; the first morphism encodes the commutator bracket of an associative algebra and the second encodes forgetting the unit. Similarly to the case of $\lie$, $\uass$ is a unital associative algebra in right $\uass$-modules;  it follows by restriction of structure, that $\uass$ is a unital associative algebra in the category of right $\catlie$-modules. 

\begin{rem}
Since $\lie$ is a Lie algebra in right $\catlie$-modules, 
one can form the universal enveloping algebra $U \lie$ of $\lie$ in the category of right $\catlie$-modules, mimicking the classical construction. The inclusion $\lie \hookrightarrow \uass$ induces an isomorphism
\[
U \lie \cong \uass
\]
of unital associative algebras in $\rmod$. (This corresponds to the fact that the universal enveloping $U \liealg (V)$ of a free Lie algebra on $V$ is  naturally isomorphic to $T(V)$, the free unital associative algebra on $V$.)
\end{rem}

Analogously to Definition \ref{defn:lie-modules}, one has:

\begin{defn}
\label{defn:uass-modules}
A $\catlie$-bimodule $B$ is a right  $\uass$-module in $\catlie$-bimodules if it is equipped with a structure map 
 $
\beta : B \odot \uass \rightarrow B 
$ 
in $\catlie$-bimodules that satisfies the unit and associativity constraints. 

Morphisms are defined in the obvious way, giving a category.
\end{defn}

By a straightforward generalization of the classical interpretation of right $\g$-modules (for a Lie algebra $\g$) as right $U\g$-modules, one has:

\begin{prop}
\label{prop:restrict_uass_lie}
Restriction along $\lie \hookrightarrow \uass$ induces an equivalence of categories between the category of right $\uass$-modules in $\bimod$ and the category of right $\lie$-modules in $\bimod$.
\end{prop}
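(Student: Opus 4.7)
The plan is to mirror the classical equivalence between right $\g$-modules and right $U\g$-modules, interpreted internally in the symmetric monoidal category $(\rmod, \odot, \rat)$, with all structure maps required to be morphisms of $\catlie$-bimodules. The key inputs are the isomorphism $U\lie \cong \uass$ of unital associative algebras in $\rmod$ highlighted in the remark above, together with Proposition \ref{prop:convolution_bimod_rmod}, which ensures that $\odot$ lifts to a functor $\bimod \times \rmod \to \bimod$.

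In one direction, given $\beta : B \odot \uass \to B$ in $\bimod$, restriction along $\iota : \lie \hookrightarrow \uass$ yields $\alpha := \beta \circ (\id_B \odot \iota) : B \odot \lie \to B$. The Lie module axiom of Definition \ref{defn:lie-modules} then follows from the associativity of $\beta$ together with the fact that, by construction of $\lie$ as a suboperad of $\ass$, the bracket on $\lie$ is sent to the commutator of the associative product in $\uass$; this is an identity of morphisms in $\rmod$, hence in $\bimod$.

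For the quasi-inverse, given $\alpha : B \odot \lie \to B$ satisfying the Lie module axiom, I would iterate $\alpha$ to obtain morphisms $B \odot \lie^{\odot k} \to B$ for each $k \in \nat$, which assemble into a map from $B \odot T(\lie)$, where $T(\lie)$ denotes the tensor algebra of $\lie$ formed in $(\rmod, \odot, \rat)$. The Lie module axiom is precisely the condition needed to ensure that this map factors through the quotient of $T(\lie)$ defining $U\lie \cong \uass$. The resulting $\beta : B \odot \uass \to B$ is unital and associative by construction, and routine verification shows the two assignments are mutually inverse: extending then restricting recovers $\alpha$ by construction, while restricting then extending recovers $\beta$ by the universal property of $U\lie$.

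The main obstacle, though essentially formal, is to verify that all constructions take place in $\bimod$ rather than merely in $\rmod$. For this, Proposition \ref{prop:convolution_bimod_rmod} is essential: it guarantees that $B \odot \lie^{\odot k}$ inherits a natural $\catlie$-bimodule structure from $B$, and that $\alpha$ being a bimodule morphism forces the iterated actions, and hence the assembled $\beta$, to be bimodule morphisms as well. Beyond this, no new ingredients are required; everything reduces to the classical argument transported to the enriched setting.
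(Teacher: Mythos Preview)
Your proposal is correct and matches what the paper intends: the paper gives no proof for this proposition, stating only that it is ``a straightforward generalization of the classical interpretation of right $\g$-modules (for a Lie algebra $\g$) as right $U\g$-modules.'' Your outline is precisely this classical argument transported to the symmetric monoidal category $(\rmod,\odot,\rat)$, with Proposition~\ref{prop:convolution_bimod_rmod} invoked to keep everything in $\bimod$; nothing more is expected.
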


In particular, this applies to the right $\lie$-module structure on $\catlie$ given by $\mut$ (see Proposition \ref{prop:mut_Lie_action}):

\begin{cor}
\label{cor:mut_uass}
The morphism $\mut$ extends canonically to $\mut ^{\uass} : \catlie \odot \uass \rightarrow \catlie$ that makes $\catlie$ a right $\uass$-module in $\bimod$. In particular, the following diagram commutes in $\bimod$:
\[
\xymatrix{
\catlie  \odot \lie 
\ar@{^(->}[d]
\ar[rd]^{\mut}
\\
\catlie \odot \ass
\ar@{^(->}[d]
&
\catlie
\\
\catlie \odot \uass,
\ar[ur]_{\mut^{\uass}}
}
\]
in which the vertical maps are given by applying $\catlie\odot - $ to $\lie \hookrightarrow \ass \hookrightarrow \uass$.

Moreover, by associativity, $\mut^{\uass}$ (and hence $\mut$) is determined by the restriction 
\[
\catlie \odot I \rightarrow \catlie
\]
along the unit morphisms $\xymatrix{ I \ar[r]
\ar@/^1pc/[rr] &\lie \ar@{^(->}[r] & \uass}$.
\end{cor}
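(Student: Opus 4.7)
The plan is to deduce the corollary from Proposition \ref{prop:restrict_uass_lie} combined with the fact that $\uass$ is generated as a unital associative algebra in $(\rmod, \odot, \rat)$ by the image of its unit morphism $I \rightarrow \uass$.

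First, I would apply Proposition \ref{prop:restrict_uass_lie} directly to the right $\lie$-module structure on $\catlie$ supplied by Proposition \ref{prop:mut_Lie_action}: the inverse of the restriction equivalence produces a canonical right $\uass$-module structure $\mut^{\uass} : \catlie \odot \uass \rightarrow \catlie$ whose defining property is that its restriction along $\catlie \odot \lie \hookrightarrow \catlie \odot \uass$ recovers $\mut$. This yields the uniqueness and naturality implicit in the word ``canonically''. Commutativity of the displayed triangle (and of the intermediate triangle through $\catlie \odot \ass$) then follows immediately, since the inclusions are obtained by applying $\catlie \odot -$ to the operad inclusions $\lie \hookrightarrow \ass \hookrightarrow \uass$, and restriction of a $\uass$-action along these inclusions is by definition what produces the $\ass$- and $\lie$-actions.

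For the \emph{moreover} clause, I would argue that $\uass$ is generated as a unital associative algebra in $(\rmod, \odot, \rat)$ by its unit $I \rightarrow \uass$. The cleanest justification is via the Schur correspondence (as set up in Example \ref{exam:fb_schur} and exploited throughout Section \ref{sect:background}): the Schur functor associated to $\uass$ is the free unital associative algebra functor $V \mapsto T(V)$, the Schur functor of $I$ is the identity $V \mapsto V$, and $I \hookrightarrow \uass$ corresponds to the generating inclusion $V \hookrightarrow T(V)$. Consequently, any right $\uass$-module structure in $\bimod$ is determined by its restriction along $\catlie \odot I \hookrightarrow \catlie \odot \uass$: for each $n$, the composite $\catlie \odot I^{\odot n} \rightarrow \catlie \odot \uass \rightarrow \catlie$ coincides, by associativity and unitality of the module action, with the iterated application of $\mut^{\uass}|_{\catlie \odot I}$, so it is forced by the restriction. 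The corresponding statement for $\mut$ is then a consequence of the already-established factorization $\mut = \mut^{\uass} \circ (\catlie \odot (\lie \hookrightarrow \uass))$.

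There is no substantive obstacle here: the argument is essentially a repackaging of the earlier material. The only point needing a modicum of care is justifying that $\uass$ is generated by $I$ as a unital associative algebra in $(\rmod, \odot, \rat)$, for which the Schur correspondence provides the most economical argument.
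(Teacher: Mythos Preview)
Your proposal is correct and follows exactly the approach the paper intends: the corollary is stated without proof precisely because it is the immediate application of Proposition~\ref{prop:restrict_uass_lie} to the $\lie$-module structure of Proposition~\ref{prop:mut_Lie_action}, together with the observation (which you make explicit via the Schur correspondence) that $\uass$ is generated over $I$ as a unital associative algebra in $(\rmod,\odot,\rat)$. Your write-up supplies the details that the paper leaves implicit, and does so accurately.
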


\begin{rem}
\label{rem:uass-action_picture}
The property of the  $\uass$-action can be illustrated by the following equality:
\begin{center}
 \begin{tikzpicture}[scale = .2]
 \draw (1,1) -- (1,-6);
 \draw [dotted] (3,1) -- (3,-6);
 \draw (5,1) -- (5,-6);
\draw [rounded corners] (7,1) -- (8,-1) --(8,-2.5) -- (6,-2.5);
\draw (9,1) -- (8,-1);
\draw [fill=white] (8,-1) circle (.2);
 \draw [fill= lightgray] (0,-1.5) -- (6,-1.5) -- (6, -3.5)  -- (0, -3.5) -- cycle;
\node at (10, -2.5) {$=$};
 \draw (13,1) -- (13,-6);
 \draw [dotted] (15,1) -- (15,-6);
 \draw (17,1) -- (17,-6);
 \draw [rounded corners] (19,1) -- (19,-1) -- (18,-1);
 \draw [fill=white, white] (19.5,0) circle (.2); 
\draw [rounded corners] (21,1) -- (21,-4) -- (18,-4);
 \draw [fill= lightgray] (12,0) -- (18,0) -- (18, -2)  -- (12, -2) -- cycle;
 \draw [fill= lightgray] (12,-3) -- (18,-3) -- (18, -5)  -- (12, -5) -- cycle;
 \node at (21,-6) {,};
 \end{tikzpicture}
 \end{center} 
 in which $\circ$ represents the generator of $\uass(2)$. Via the Corollary, this yields the diagram illustrating Remark \ref{rem:Lie_action}.
 
 This extends in the obvious way by using the associative algebra structure of $\uass$ in right $\catlie$-modules. 
 \end{rem}

\subsection{Restricting to $\catlie \odot I$}
The unit $I \hookrightarrow \lie$ induces a morphism of left $\catlie$, right $\rat\fb$ bimodules:
\[
\catlie \odot I \hookrightarrow \catlie \odot \lie,
\]
where the right $\catlie$-module structure of $\catlie \odot \lie$ has been restricted to a right $\rat\fb$-module structure via $\rat\fb \hookrightarrow \catlie$ induced by the unit $I \hookrightarrow \lie$.

\begin{defn}
\label{defn:delta1}
Let $\delta^{(1)} \catlie \subset \delta \catlie$ be the sub left $\catlie$, right $\rat\fb$ bimodule corresponding to $\catlie \odot I$ under the identification $\delta \catlie \cong \catlie \odot \lie$ of Lemma \ref{lem:delta_catlie}.
\end{defn}

\begin{rem}
By definition, $\delta \catlie (-,-) = \catlie (-, - \boxplus 1)$. In particular, 
\[
\delta \catlie (m,n) = \bigoplus_{f \in \hom_{\fs}(\mathbf{m}, \mathbf{n+1})} \bigotimes_{i=1} ^n \lie (|f^{-1} (i)|).
\]
Under this identification, $\delta^{(1)}\catlie (m, n) $ is the subspace corresponding to   the terms with $f\in \hom_{\fs}(\mathbf{m}, \mathbf{n+1})$ such that $|f^{-1} (n+1)|=1$.
\end{rem}

Recall that there is an isomorphism of the underlying right $\rat\fb$-modules
$ 
\ass \cong \uass \odot I$.  
At the level of the associated Schur functors, writing $\overline{T} (V)$ for the augmentation ideal of $T(V)$, this corresponds to the isomorphism $ \overline{T}(V) \cong T(V) \otimes V$ (this is even an isomorphism of left $T(V)$-modules).

\begin{defn}
\label{defn:pi}
Let $\pi : \catlie \odot \lie \twoheadrightarrow \catlie \odot I$ be the surjective morphism of left $\cat\lie$, right $\rat\fb$ bimodules given by the composite:
\[
 \catlie \odot \lie
\hookrightarrow 
 \catlie \odot \ass 
\cong 
\catlie \odot \uass \odot I
\longrightarrow 
 \catlie \odot I,
\]
where the first map is induced by $\lie \hookrightarrow \ass$ and the second by $\mut^{\uass}$.
\end{defn}

\begin{rem}
The reader should verify that $\pi$ is a morphism of left $\catlie$, right $\rat\fb$ bimodules, as asserted.
\end{rem}

By construction, $\pi$ is a retract of $\catlie \odot I \hookrightarrow \catlie \odot \lie$ in left $\catlie$, right $\rat\fb$ bimodules. Passing to the associated Schur functors, $\pi$ gives
\[
\pi(V) : 
\underline{\liealg(V)} \otimes \liealg(V) 
\twoheadrightarrow 
\underline{\liealg (V)} \otimes V, 
\]
which is a morphism of left $\catlie$-modules. This is determined explicitly by the following:

\begin{lem}
\label{lem:understand_pi}
The natural morphism $\pi(V)$ is determined recursively by the following:
\begin{enumerate}
\item 
for $\mathfrak{Z}\in \underline{\liealg (V)}$ and $x\in V \subset \liealg (V)$, $\pi(V) (\mathfrak{Z} \otimes x) = \mathfrak{Z}\otimes x$;
\item 
for $X, Y \in \liealg (V) $, $\pi(V) (\mathfrak{Z} \otimes [X, Y]) = \pi(V) (\mathfrak{Z}\cdot X \otimes Y) - \pi(V) (\mathfrak{Z}\cdot Y \otimes X)$,  where $\cdot$ denotes the  action of $\liealg(V)$ on $\underline{\liealg(V)}$.
\end{enumerate}
\end{lem}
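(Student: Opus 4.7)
The plan is to deduce the recursive characterization directly from the construction of $\pi$ in Definition \ref{defn:pi}, passed through the Schur correspondence. First I would make explicit the map $\pi(V)$: under the identification $\ass \cong \uass \odot I$ that corresponds via Schur functors to the isomorphism $\overline{T}(V) \cong T(V) \otimes V$ given by extracting the last letter of a monomial, the composite defining $\pi$ sends $\mathfrak{Z} \otimes Z$ (with $Z \in \liealg(V) \subset \overline{T}(V)$) as follows: write the image of $Z$ under $\liealg(V) \hookrightarrow \overline{T}(V) \cong T(V) \otimes V$ as $\sum_i Z'_i \otimes v_i$, and then
\[
\pi(V)(\mathfrak{Z} \otimes Z) \;=\; \sum_i \mathfrak{Z} \cdot Z'_i \otimes v_i,
\]
where $\cdot$ denotes the right $T(V)$-action on $\underline{\liealg(V)}$ coming from $\mut^{\uass}$. (By Corollary \ref{cor:mut_uass}, this action is the restriction of the $U\liealg(V) \cong T(V)$-action that extends the adjoint $\liealg(V)$-action.)

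Property (1) is then immediate: for $x\in V$, the last-letter decomposition of $x$ is $1 \otimes x \in T(V) \otimes V$, so $\pi(V)(\mathfrak{Z} \otimes x) = \mathfrak{Z}\cdot 1 \otimes x = \mathfrak{Z}\otimes x$. For property (2), the key observation is that the isomorphism $\overline{T}(V) \cong T(V)\otimes V$ is one of \emph{left} $T(V)$-modules (as recorded in the paper immediately before Definition \ref{defn:pi}); hence, writing $Y \mapsto \sum_\beta Y'_\beta \otimes v_\beta$, we have $XY \mapsto \sum_\beta X Y'_\beta \otimes v_\beta$ in $T(V) \otimes V$. Combined with the associativity of the $T(V)$-action on $\underline{\liealg(V)}$, this gives
\[
\pi(V)(\mathfrak{Z} \otimes XY) \;=\; \sum_\beta \mathfrak{Z}\cdot (X Y'_\beta) \otimes v_\beta \;=\; \sum_\beta (\mathfrak{Z}\cdot X)\cdot Y'_\beta \otimes v_\beta \;=\; \pi(V)(\mathfrak{Z}\cdot X \otimes Y).
\]
The analogous identity holds for $YX$, and subtracting yields property (2), using $[X,Y] = XY - YX$ in $\overline{T}(V)$ (coming from $\lie \hookrightarrow \ass$).

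Finally, I would verify that these two identities determine $\pi(V)$ uniquely by induction on the bracket depth of the second tensor factor: elements of $\liealg(V)$ of depth zero lie in $V$ and are covered by (1), while (2) rewrites $\pi(V)(\mathfrak{Z}\otimes [X,Y])$ in terms of $\pi(V)$ evaluated on elements of strictly smaller depth, noting that the $\liealg(V)$-action on $\underline{\liealg(V)}$ does not affect this depth. The only subtlety — and what I expect to be the main bookkeeping obstacle — is ensuring the precise match between the composite defining $\pi$ and the $T(V)$-module structure on $\overline{T}(V)$; once this is pinned down via Corollary \ref{cor:mut_uass}, the remaining verifications are formal.
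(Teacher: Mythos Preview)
Your proposal is correct and follows essentially the same approach as the paper's proof, which simply asserts that the two identities are ``straightforward'' to check from the definition of $\pi$ and that they determine $\pi(V)$ by recursion on the arity of $\lie$ (i.e., bracket depth). You have supplied the details that the paper leaves implicit --- the explicit last-letter description of $\pi(V)$ via $\overline{T}(V)\cong T(V)\otimes V$ and the use of the left $T(V)$-module structure of this isomorphism to verify identity~(2) --- but the strategy is the same.
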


\begin{proof}
It is straightforward, using the definition of $\pi$, to check that the two equalities in the statement hold.
Moreover, by recursion on the arity of the Lie operad $\lie$ (i.e., on the length of iterated Lie brackets in $\liealg (V)$), these equalities determine $\pi(V)$. 
\end{proof}

\begin{rem}
Via the Schur correspondence, the description of $\pi(V)$ given in Lemma \ref{lem:understand_pi} could be used to {\em define} $\pi$. However, this would requiring checking that this gives a well-defined morphism, i.e., that anti-symmetry and the Jacobi relation are satisfied. Definition \ref{defn:pi} circumvents this, since these relations are already encoded in the action $\mut ^{\uass}$ and $\lie \hookrightarrow \uass$.
\end{rem}

Using Definition \ref{defn:delta1}, $\pi$ can be viewed as a morphism 
\[
\pi : \delta \catlie \twoheadrightarrow \delta^{(1)} \catlie
\]
of left $\catlie$, right $\rat\fb$ bimodules, where $\delta \catlie$ is considered as a left $\catlie$, right $\rat\fb$ bimodule via restriction of its right $\catlie$-module structure along $\rat \fb \hookrightarrow \catlie$.

The key input is the following:

\begin{prop}
\label{prop:delta1_catlie_bimodule}
There is a unique $\catlie$-bimodule structure on $\delta^{(1)} \catlie$ such that $\pi :\delta \catlie \twoheadrightarrow \delta^{(1)} \catlie$ is a morphism of $\catlie$-bimodules.
\end{prop}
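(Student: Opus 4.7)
The plan is to deduce uniqueness from the retraction property of $\pi$, and to establish existence by proving that $\ker \pi \subset \delta \catlie$ is stable under the right $\catlie$-action, so that the sought bimodule structure descends to the quotient.

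For uniqueness I would observe that $\pi$ is a surjective morphism of left $\catlie$, right $\rat\fb$-bimodules, with splitting $i : \delta^{(1)}\catlie \hookrightarrow \delta \catlie$ induced by $I \hookrightarrow \lie$. Any $\catlie$-bimodule structure on $\delta^{(1)}\catlie$ making $\pi$ a bimodule morphism must satisfy
\[
x \cdot \phi \;=\; \pi(i(x)) \cdot \phi \;=\; \pi\bigl(i(x) \cdot_{\delta\catlie} \phi\bigr),
\]
so the right action is forced. For existence I take this formula as the definition. The unit axiom, the bimodule compatibility and the left-equivariance of the resulting $\pi$ are immediate from $\pi i = \id$, the bimodule structure on $\delta \catlie$, and the left-linearity of $\pi$. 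Associativity is equivalent to the statement that $(y - i\pi(y)) \cdot \phi' \in \ker \pi$ for every $y \in \delta \catlie$ and every $\phi' \in \catlie$; since $y - i\pi(y) \in \ker \pi$, the whole existence problem reduces to showing that $\ker \pi$ is a right $\catlie$-submodule of $\delta \catlie$.

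To prove stability I would pass to Schur functors. Lemma \ref{lem:understand_pi} characterizes $\pi(V)$ recursively, and by Example \ref{exam:conv_right_catlie} the right $\catlie$-action on $\delta \catlie \cong \catlie \odot \lie$ corresponds at the Schur level to applying the Lie-algebra monad counit $\alpha : \liealg(\liealg(V)) \to \liealg(V)$ on each of the $n+1$ tensor factors. I would introduce
\[
\beta(V) : \underline{\liealg(\liealg(V))} \otimes \liealg(V) \longrightarrow \underline{\liealg(V)} \otimes V,
\qquad
\mathfrak{Z} \otimes X \;\longmapsto\; \pi(V)\bigl(\alpha^{\otimes n}(\mathfrak{Z}) \otimes X\bigr),
\]
and prove the key identity $\pi(V) \circ \alpha^{\otimes (n+1)} = \beta(V) \circ \pi(\liealg(V))$, which gives stability of $\ker \pi$ and simultaneously identifies $\beta(V)$ as the induced Schur-level right $\catlie$-action on $\delta^{(1)}\catlie$. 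The identity is proved by induction on the outer Lie-bracket depth of the second tensor factor $X \in \liealg(\liealg(V))$. The base case, $X \in \liealg(V) \subset \liealg(\liealg(V))$, is immediate since $\alpha$ and $\pi(\liealg(V))$ both act as the identity on such $X$ (Lemma \ref{lem:understand_pi}(i)). The inductive step, $X = [X_1, X_2]$, follows by expanding with Lemma \ref{lem:understand_pi}(ii) at both the $\liealg(V)$- and $V$-levels and matching the results using the Lie-homomorphism identity $\alpha([X_1, X_2]) = [\alpha X_1, \alpha X_2]$ together with the $\alpha$-equivariance of the adjoint action, $\alpha_n(\mathfrak{Z} \cdot X) = \alpha_n(\mathfrak{Z}) \cdot \alpha(X)$, which holds because $\alpha$ is a Lie-algebra map.

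The main obstacle is precisely this reconciliation: $\pi(\liealg(V))$ is controlled by the \emph{outer} Lie structure on $\liealg(\liealg(V))$, whereas $\pi(V) \circ \alpha^{\otimes(n+1)}$ is controlled by the \emph{inner} Lie structure on $\liealg(V)$, and the inductive comparison is what translates the outer brackets through $\alpha$ into inner brackets while preserving the recursive shape of $\pi$. Once stability is established, the full $\catlie$-bimodule axioms and the naturality in $V$ of $\beta$ pass to $\delta^{(1)}\catlie$ from the corresponding properties on $\delta \catlie$ by passage to the quotient, completing the argument.
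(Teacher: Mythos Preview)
Your argument is correct and close in spirit to the paper's, but the organization is genuinely different and worth noting. The paper \emph{defines} the structure map on $\delta^{(1)}\catlie$ directly (your $\beta(V)$ coincides with the paper's map~(3.1)) and then verifies associativity via a separate diagram, reducing to elements $X\wedge Y$ in $\lie_2(\liealg(V))$ by invoking that $\lie$ is binary quadratic; the commutativity of the $\pi$-compatibility square is used only to \emph{determine} what the structure map must be. You instead frame the whole thing as passage to the quotient by $\ker\pi$: once you show $\ker\pi$ is stable under the right $\catlie$-action (your key identity $\pi(V)\circ\alpha^{\otimes(n+1)}=\beta(V)\circ\pi(\liealg(V))$), associativity, unitality, bimodule compatibility, and the fact that $\pi$ is a bimodule map all come for free from the corresponding properties of $\delta\catlie$. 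Your inductive verification on outer bracket depth in $\liealg(\liealg(V))$ is the unreduced form of the paper's $\lie_2$ check; the paper's one-step reduction is shorter, but your induction is self-contained and does not require the observation that $\lie$ is binary quadratic. The computational heart --- Lemma~\ref{lem:understand_pi}(ii) together with the $\alpha$-equivariance of the adjoint action --- is identical in both proofs.
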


\begin{proof}
If such a structure exists, it is clearly unique. Moreover, since $\pi$ is a morphism of left $\catlie$, right $\rat\fb$ bimodules, it suffices to extend the right $\rat \fb$-bimodule structure on $\delta^{(1)} \catlie$ to a suitable right $\catlie$-module structure. 

Since the Schur functor corresponding to $\delta^{(1)} \catlie$ is $V \mapsto \underline{\liealg(V)} \otimes V$, 
we require to exhibit the appropriate natural structure map
\begin{eqnarray}
\label{eqn:structure}
\underline{\liealg(\liealg (V))} \otimes
\liealg (V) 
\rightarrow 
 \underline{\liealg (V)} \otimes V. 
\end{eqnarray}
This should be a morphism of left $\catlie$-modules and also satisfy the unital and associativity constraints for a  right $\catlie$-module structure. Moreover, the compatibility with the morphism $\pi$ requires the 
commutativity of:
\[
\xymatrix{
\underline{\liealg(\liealg (V))} \otimes
\liealg (\liealg(V))
\ar[r]
\ar@{->>}[d]_{\pi(\liealg(V))} 
&
 \underline{\liealg(V)} \otimes
\liealg (V)
\ar@{->>}[d]^{\pi(V)}
\\
\underline{\liealg(\liealg (V))} \otimes
\liealg (V) 
\ar[r]_(.55){(\ref{eqn:structure})}
&
\underline{\liealg (V)} \otimes V,
}
\]
in which the horizontal natural morphisms correspond to the respective right $\catlie$-actions.
Explicitly, the top horizontal map is induced by the Lie algebra composition $\liealg(\liealg(V)) \rightarrow \liealg(V)$   on both tensor factors; we are in the process of constructing the bottom horizontal map, (\ref{eqn:structure}).
 
 Since $\pi(\liealg(V))$ is a retract of the inclusion $\underline{\liealg(\liealg (V))} \otimes
\liealg (V) \hookrightarrow \underline{\liealg(\liealg (V))} \otimes
\liealg (\liealg(V))$ given by $\liealg (V) = I (\liealg (V)) \hookrightarrow \liealg (\liealg (V))$ on the second tensor factor, the commutativity of the square requires that (\ref{eqn:structure}) be defined as the composite
\[
\underline{\liealg(\liealg (V))} \otimes
\liealg (V)
\rightarrow 
\underline{\liealg(V)} \otimes
\liealg (V)
\stackrel{\pi(V)} {\rightarrow}
\underline{\liealg(V)} \otimes V,
\]
where the first map is given by the structure map $\underline{\liealg(\liealg (V))} \rightarrow \underline{\liealg(V)}$ on the first tensor factor (corresponding to the right $\catlie$-module structure of $\catlie$). 

This is clearly a morphism of left $\catlie$-modules, hence it remains to check that it defines a right $\catlie$-module structure on $\delta^{(1)} \catlie$. The unit condition is immediate, hence it suffices to check associativity for the composition.

Since $\catlie$ is a $\catlie$-bimodule (see Example \ref{exam:catlie_bimodule}) and  the adjoint action is an action in right $\catlie$-modules, one reduces to checking the commutativity of the following diagram:
\[
\xymatrix{
\underline{\liealg(V)} \otimes \liealg (\liealg(V))
\ar[rr]^{\lambda_1}
\ar@{^(->}[d]_{\lambda_2}
&&
\underline{\liealg(V)} \otimes \liealg (V)
\ar[dd]^{\pi(V)} 
\\
\underline{\liealg(\liealg(V))} \otimes \liealg (\liealg(V))
\ar[d]_{\pi (\liealg(V))} 
\\
\underline{\liealg(\liealg (V))} \otimes \liealg (V)
\ar[r]_{\lambda_3}
&
\underline{\liealg(V)} \otimes \liealg (V)
\ar[r]_{\pi(V)} 
&
\underline{\liealg(V)} \otimes V,
}
\]
in which $\lambda_1$ and $\lambda_3$ are  induced by the composition $\liealg(\liealg(V)) \rightarrow \liealg (V)$ (on the second and first tensor factors respectively), and $\lambda_2$  
 is given by $\underline{\liealg (-)}$ applied to $V \hookrightarrow \liealg (V)$ on the first tensor factor.
 
Moreover, since $\lie$ is a  binary quadratic operad, one reduces further to considering $\lie_2 (\liealg (V)) \subset \liealg (\liealg(V))$, namely to terms of the form $X\wedge Y$ (considered as an element of $\lie_2 (\liealg(V))$), with $X, Y \in \liealg (V)$; under the map  $\lie_2 (\liealg(V)) \rightarrow \liealg(V)$, $ X \wedge Y \mapsto [X,Y]$.

That the diagram commutes follows from the second equality of Lemma \ref{lem:understand_pi}. Namely, consider $\mathfrak{Z} \in \underline{\liealg (V)}$ and $X$, $Y$ as above. Then, the image of $\mathfrak{Z} \otimes (X \wedge Y)$ on passing around the bottom of the diagram is 
\[
\pi(V) (\mathfrak{Z} \cdot X \otimes Y)  - \pi (V) (\mathfrak{Z} \cdot Y \otimes X).
\]
Passing around the top of the diagram, gives $\pi (V)( \mathfrak{Z} \otimes [X,Y])$. These elements  are equal, by  Lemma \ref{lem:understand_pi}.
\end{proof}

\begin{nota}
\label{nota:mut1}
Write $\mut^{(1)} : \delta^{(1)} \catlie  \rightarrow \catlie $  for the restriction of 
$\mut : \delta \catlie  \rightarrow \catlie $ to $\delta^{(1)} \catlie$.
\end{nota}

The transformations $\mut$ and $\mut^{(1)}$ are also compatible via $\pi$:

\begin{lem}
\label{lem:mut_mut1_compat_pi}
The following diagram commutes:
\[
\xymatrix{
\delta \catlie
\ar[r]^\mut 
\ar@{->>}[d]_\pi 
&
\catlie 
\ar@{=}[d]
\\
\delta^{(1)}\catlie 
\ar[r]_{\mut^{(1)}}
&
\catlie.
}
\]
\end{lem}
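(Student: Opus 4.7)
The plan is to factor both sides of the desired equation through the associative-algebra action $\mut^{\uass}: \catlie \odot \uass \to \catlie$ of Corollary \ref{cor:mut_uass} and reduce the identity to associativity of that action.

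First, I would rewrite the three morphisms involved using Corollary \ref{cor:mut_uass}: $\mut$ is $\mut^{\uass}$ precomposed with $\catlie \odot \lie \hookrightarrow \catlie \odot \uass$, and $\mut^{(1)}$ (the restriction of $\mut$ to $\delta^{(1)} \catlie \cong \catlie \odot I$) is $\mut^{\uass}$ precomposed with the unit inclusion $\catlie \odot I \hookrightarrow \catlie \odot \uass$ arising from $I \hookrightarrow \lie \hookrightarrow \uass$. Unwinding Definition \ref{defn:pi}, the composite $\mut^{(1)} \circ \pi$ therefore equals
\[
\catlie \odot \lie \hookrightarrow \catlie \odot \ass \cong \catlie \odot \uass \odot I \xrightarrow{\mut^{\uass} \odot I} \catlie \odot I \hookrightarrow \catlie \odot \uass \xrightarrow{\mut^{\uass}} \catlie.
\]

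Next, by naturality of $\odot$, the segment $(\catlie \odot I \hookrightarrow \catlie \odot \uass) \circ (\mut^{\uass} \odot I)$ coincides with $(\mut^{\uass} \odot \uass) \circ (\catlie \odot \uass \odot I \hookrightarrow \catlie \odot \uass \odot \uass)$. The associativity of the $\uass$-action, $\mut^{\uass} \circ (\mut^{\uass} \odot \uass) = \mut^{\uass} \circ (\catlie \odot m)$ (where $m: \uass \odot \uass \to \uass$ is the multiplication of the algebra $\uass$), then converts the above composite into $\mut^{\uass} \circ (\catlie \odot m)$ precomposed with $(\catlie \odot \uass \odot I \hookrightarrow \catlie \odot \uass \odot \uass) \circ (\catlie \odot \lie \hookrightarrow \catlie \odot \ass \cong \catlie \odot \uass \odot I)$.

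The concluding observation is that $m$ restricted to $\uass \odot I$ (via the unit $I \hookrightarrow \uass$) is the inclusion $\ass \hookrightarrow \uass$ under the identification $\ass \cong \uass \odot I$; indeed, this is precisely how that isomorphism is defined, since at the Schur level the map $T(V) \otimes V \to \overline{T}(V)$ is concatenation in $T(V)$. After this identification the entire composite collapses to $\mut^{\uass} \circ (\catlie \odot \lie \hookrightarrow \catlie \odot \ass \hookrightarrow \catlie \odot \uass) = \mut$, again by Corollary \ref{cor:mut_uass}. The only substantive task is bookkeeping of these inclusions, and there is no real obstacle; alternatively, one could give an elementwise Schur-functor proof based on Lemma \ref{lem:understand_pi}: writing $X \in \liealg(V) \subset \overline{T}(V)$ as $\sum_i T_i \otimes x_i \in T(V) \otimes V$, one has $\pi(V)(\mathfrak{Z} \otimes X) = \sum_i (\mathfrak{Z} \cdot T_i) \otimes x_i$, and associativity of the $T(V) = U\liealg(V)$-action on $\underline{\liealg(V)}$ yields $\sum_i (\mathfrak{Z} \cdot T_i) \cdot x_i = \mathfrak{Z} \cdot X = \mut(V)(\mathfrak{Z} \otimes X)$.
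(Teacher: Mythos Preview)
Your proof is correct and follows essentially the same route as the paper: both unwind $\mut^{(1)} \circ \pi$ via Definition \ref{defn:pi}, invoke associativity of the $\uass$-action from Corollary \ref{cor:mut_uass} to collapse the composite to the restriction of $\mut^{\uass}$ along $\catlie \odot \ass \hookrightarrow \catlie \odot \uass$, and then identify this with $\mut$ via Corollary \ref{cor:mut_uass}. Your version spells out the naturality and the identification $\ass \cong \uass \odot I$ more explicitly than the paper does, and the alternative elementwise argument via Lemma \ref{lem:understand_pi} is a nice bonus not present in the paper.
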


\begin{proof}
Consider the composite around the bottom of the diagram, using the identifications $\delta \catlie \cong \catlie \odot \lie$ and $\delta^{(1)} \catlie \cong \catlie \odot I$.
By the construction of $\pi$, this is given by the composite
\[
\xymatrix{
\catlie \odot \lie
\ar[d]
\\
 \catlie \odot \ass 
\ar@{.>}@/^1pc/[drrr]
\ar[d]_\cong 
\\
\catlie \odot \uass \odot I 
\ar[rr]_{\mut^{\uass} \odot I}
&&
\catlie \odot I 
\ar[r]_{\mut^{(1)}}
&
\catlie.
}
\]
By associativity of the right $\uass$ action on $\catlie$, the composite indicated by the dotted arrow is simply the restriction of the action $\catlie \odot \uass \rightarrow \catlie$ to $\catlie \odot \ass$. 

By Corollary \ref{cor:mut_uass},  the map obtained by restriction along $\lie \hookrightarrow \ass$ is  $\mut$, as required.
\end{proof}

From this one deduces: 

\begin{prop}
\label{prop:mut1_bimodule}
The map $\mut^{(1)} : \delta^{(1)} \catlie \rightarrow \catlie$ is a morphism of $\catlie$-bimodules.
\end{prop}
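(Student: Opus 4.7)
The strategy is to use surjectivity of $\pi$ together with the two facts that are already in hand: (i) the natural transformation $\mut : \delta \catlie \to \catlie$ is a morphism of $\catlie$-bimodules, since Corollary \ref{cor:mut} is stated in the generality of endofunctors of $\lmod$ and may be applied in $\bimod$ (compare Example \ref{exam:mut_bimodule}); and (ii) by construction, $\mut^{(1)}$ is the restriction of $\mut$ to the sub left $\catlie$, right $\rat\fb$-bimodule $\delta^{(1)} \catlie \subset \delta \catlie$, so it is automatically a morphism of left $\catlie$, right $\rat\fb$-bimodules. Only the compatibility with the right $\catlie$-action on $\delta^{(1)} \catlie$ remains to be verified.

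For this, recall from Proposition \ref{prop:delta1_catlie_bimodule} that the right $\catlie$-action on $\delta^{(1)} \catlie$ is the unique one making the surjection $\pi : \delta \catlie \twoheadrightarrow \delta^{(1)} \catlie$ into a morphism of $\catlie$-bimodules; in particular $\pi$ is right $\catlie$-linear. The plan is then to carry out a short diagram chase using Lemma \ref{lem:mut_mut1_compat_pi}, which asserts $\mut^{(1)} \circ \pi = \mut$. Given $x \in \delta^{(1)} \catlie$ and $c \in \catlie$, choose a lift $\tilde{x} \in \delta \catlie$ with $\pi(\tilde{x}) = x$; then
\[
\mut^{(1)}(x \cdot c) \;=\; \mut^{(1)}\bigl(\pi(\tilde{x} \cdot c)\bigr) \;=\; \mut(\tilde{x} \cdot c) \;=\; \mut(\tilde{x}) \cdot c \;=\; \mut^{(1)}(x) \cdot c,
\]
using right $\catlie$-linearity of $\pi$, Lemma \ref{lem:mut_mut1_compat_pi}, right $\catlie$-linearity of $\mut$, and Lemma \ref{lem:mut_mut1_compat_pi} again.

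There is no real obstacle here: the substantive content has already been packaged into Proposition \ref{prop:delta1_catlie_bimodule} (which manufactured the right $\catlie$-structure on $\delta^{(1)}\catlie$ precisely so that $\pi$ would be a bimodule morphism) and into Lemma \ref{lem:mut_mut1_compat_pi} (which identifies $\mut^{(1)} \circ \pi$ with $\mut$). Proposition \ref{prop:mut1_bimodule} is effectively a corollary of those two statements together with surjectivity of $\pi$. If one prefers a more structural formulation, one may instead note that $\mut$ factors through $\pi$ in $\catlie$-bimodules because the kernel of $\pi$ lies in the kernel of $\mut$ (a fact encoded in Lemma \ref{lem:mut_mut1_compat_pi}), so the induced factorisation $\mut^{(1)}$ inherits the bimodule structure automatically.
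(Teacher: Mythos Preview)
Your proof is correct and follows essentially the same approach as the paper's. The paper phrases the argument diagrammatically---stacking the square from Lemma \ref{lem:mut_mut1_compat_pi} beneath the square expressing right $\catlie$-linearity of $\mut$, and observing that the composite left vertical is the right $\catlie$-action on $\delta^{(1)}\catlie$ constructed in Proposition \ref{prop:delta1_catlie_bimodule}---while you phrase the same chase elementwise via a lift through $\pi$; the content is identical. (A minor simplification: since $\pi$ retracts the inclusion $\delta^{(1)}\catlie \hookrightarrow \delta\catlie$, you may take $\tilde{x}=x$ and avoid choosing lifts at all.)
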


\begin{proof}
It is clear that $\mut^{(1)}$ is a morphism of left $\catlie$-modules, hence it suffices to check that it is a morphism of right $\catlie$-modules. 

Consider the following diagram
\[
\xymatrix{
(\catlie\odot I) \otimes_\fb \catlie 
\ar[rr]^{\mut^{(1)} \otimes \id}
\ar[d]
&&
\catlie \otimes _\fb \catlie 
\ar[d]
\\
\catlie\odot \lie
\ar[rr]^\mut
\ar[d]_{\pi} 
&&
\catlie
\ar@{=}[d]
\\
\catlie \odot I 
\ar[rr]_{\mut^{(1)}}
&&
\catlie.
}
\]
Here, the vertical arrows in the top square are given respectively by the right $\catlie$ action on  $\catlie$ and the restriction of the right $\catlie$ action on $\catlie \odot \lie$ to $\catlie \odot I$. The top square commutes since $\mut$ is a morphism of right $\catlie$-modules. The bottom square is commutative by Lemma \ref{lem:mut_mut1_compat_pi}, so that the whole diagram is commutative. 

Now, by construction (see the proof of Proposition \ref{prop:delta1_catlie_bimodule}), the left hand vertical composite is the right $\catlie$-module structure map of $\catlie \odot I$. The commutativity of the outer square therefore gives that $\mut^{(1)}$ is a morphism of right $\catlie$-modules, as required.
\end{proof}

\subsection{The DG category}

To motivate the construction in this section, we first consider the following simpler situation. Let  $A$ be a unital, associative algebra and $M$  an $A$-bimodule, both concentrated in (homological) degree $0$. Write $M[1]$ for the shift of $M$, placed in homological degree one. Then one can form the associated square-zero extension:
\[
A \oplus M[1],
\]
which is a graded associative algebra concentrated in homological degrees zero and one. 

Suppose that $d : M \rightarrow A$ is a morphism of $A$-bimodules. This induces a degree $-1$ morphism $M[1] \rightarrow A$, which can be considered as a differential. This extends  to a differential on  $(A \oplus M[1])^{\otimes 2}$ (considered as a tensor product of complexes). In particular, restricted to $M[1] \otimes M[1]$, this gives
\[
M[1] \otimes M[1] \rightarrow A \otimes M[1]  \  \oplus \   M[1] \otimes A . 
\]
Explicitly, for $x, y \in M$, $x[1] \otimes y[1] \mapsto dx \otimes y[1] - x[1] \otimes dy$.

\begin{lem}
\label{lem:DGA_equivalent}
The morphism of $A$-bimodules $d : M \rightarrow A$ induces a differential graded algebra structure on $(A \oplus M[1] , d)$ if and only if the following equivalent conditions are satisfied:
\begin{enumerate}
\item 
the composite 
\[
M[1] \otimes M[1] \stackrel{d}{\rightarrow} A \otimes M[1]  \  \oplus \   M[1] \otimes A
\rightarrow M[1]
\]
is zero, where the second map is induced by the left and right $A$-actions on $M$ respectively; 
\item 
the following diagram commutes
\[
\xymatrix{
M \otimes M \ar[r]^{d \otimes \id_M}
\ar[d]_{\id_M \otimes d}
&
A \otimes M \ar[d] 
\\
 M \otimes A \ar[r] 
 &M,
}
\]
where the unlabelled arrows are given by the left and right $A$-module structures of $M$ respectively.
\end{enumerate}
\end{lem}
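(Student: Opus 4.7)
The plan is to reduce the assertion to checking the graded Leibniz rule case by case, using that the underlying graded algebra $A \oplus M[1]$ is concentrated in degrees $0$ and $1$ with the $M[1] \cdot M[1]$ product equal to zero (square-zero extension).

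First I would dispense with $d^2 = 0$: since $d$ vanishes on $A$ and its restriction to $M[1]$ lands in $A$, the composite $d \circ d$ is automatically zero. Hence the only content in being a DGA is the graded Leibniz rule
\[
d(\xi \eta) = d(\xi) \eta + (-1)^{|\xi|} \xi \, d(\eta).
\]
I would verify this by splitting into the four possible cases according to whether $\xi$ and $\eta$ lie in $A$ or in $M[1]$. For $\xi, \eta \in A$, both sides are zero. For $\xi \in A$, $\eta \in M[1]$ (and symmetrically), the required identity $d(a \cdot m) = a \cdot d(m)$ (resp.\ $d(m \cdot a) = d(m) \cdot a$) is exactly the statement that $d : M \to A$ is a morphism of left (resp.\ right) $A$-modules; this is given by hypothesis. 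The only remaining case is $\xi, \eta \in M[1]$: here $\xi \eta = 0$ by the square-zero structure, so the left-hand side vanishes, and the right-hand side is precisely the composite appearing in condition (1), namely $(dx) \cdot y - x \cdot (dy)$ for $x,y \in M$ (the sign arising from $|\xi|=1$). Thus the Leibniz rule reduces exactly to condition (1).

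The equivalence of (1) and (2) is then purely a translation: condition (1) asserts that $(dx) \cdot y - x \cdot (dy) = 0$ in $M$ for all $x, y \in M$, while condition (2) asserts the equality of the two maps $M \otimes M \to M$ given by $(x,y) \mapsto (dx)\cdot y$ and $(x,y) \mapsto x \cdot (dy)$; these are manifestly the same statement. I would simply observe this identification.

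The only delicate point, and hence the main thing to get right, is the Koszul sign: because both tensor factors lie in degree $1$, the induced differential on $M[1] \otimes M[1]$ carries a minus sign on the second summand, which is what produces the combination $(dx)\cdot y - x \cdot (dy)$ rather than a sum. Once this is handled correctly, the remaining verifications are straightforward case-checks using only that $d$ is a bimodule map and that the product $M[1] \cdot M[1]$ vanishes.
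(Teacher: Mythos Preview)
Your proposal is correct and follows essentially the same approach as the paper: both arguments verify the Leibniz rule component by component on $(A \oplus M[1])^{\otimes 2}$, observe that the $A\otimes A$ and mixed $A\otimes M[1]$, $M[1]\otimes A$ cases are handled by $d$ being an $A$-bimodule morphism, and that the $M[1]\otimes M[1]$ case yields exactly condition~(1), with condition~(2) being its restatement after tracking the Koszul sign. Your explicit remark that $d^2=0$ is automatic is a minor addition the paper leaves implicit.
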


\begin{proof}
One has a DGA if and only if the multiplication $(A\oplus M[1]) ^{\otimes 2} \rightarrow A\oplus M[1]$ is compatible with the differentials. This multiplication has components:
\begin{eqnarray*}
A \otimes A & \rightarrow & A
\\
A \otimes M[1] \ \oplus \  M[1] \otimes A &\rightarrow & M[1] 
\\
M[1] \otimes M[1] & \rightarrow & 0,
\end{eqnarray*}
corresponding to the multiplication of $A$, the $A$-bimodule structure maps of $M[1]$ and the `square zero' condition (which also follows here from the homological grading).  

Since $d: M \rightarrow A$ is a morphism of $A$-bimodules, the compatibility for the first two components is immediate. The first condition of the statement is equivalent to the compatibility with the final component, giving the first equivalent condition. 

The second equivalent condition is simply a restatement without the homological grading, taking into account the Koszul signs.
\end{proof}

\begin{rem}
\label{rem:generators}
Both the composite maps $M \otimes M \rightrightarrows M$ considered in the second equivalent condition of Lemma \ref{lem:DGA_equivalent}  are morphisms of $A$-bimodules, where $A$ acts on the left via the left hand tensor factor of $M \otimes M$ and on the right via the right hand tensor factor. Hence, to check the condition, one can restrict to considering  generators of $M \otimes M$ as an $A$-bimodule.
\end{rem}

Here we adapt the above, working with unital monoids in the category of $\rat\fb$-bimodules and bimodules over such, using the monoidal structure provided by  $\otimes_\fb$. We take $A$ to be $\catlie$ and $M$ to be $\delta^{(1)} \catlie$, with $d = \mut ^{(1)}$; $\catlie$ is a unital monoid in $\rat\fb$-bimodules  and $\delta^{(1)} \catlie$ is a bimodule over this. 

Proposition \ref{prop:mut1_bimodule} tells us that $d$ is a morphism of $\catlie$-bimodules. We may therefore consider the $\rat\fb$-bimodule 
\[
\catlie \oplus \delta^{(1)}\catlie[1]
\]
as a category enriched in $\nat$-graded $\rat$-vector spaces. As for $\catlie$, the set of  objects is $\nat$ and composition of morphisms is given by the square zero extension, which yields
\[
(\catlie \oplus \delta^{(1)}\catlie[1])
\otimes_\fb
(\catlie \oplus \delta^{(1)}\catlie[1])
\rightarrow 
\catlie \oplus \delta^{(1)}\catlie[1].
\]

For the proof of Theorem \ref{thm:DG_category} below, we require suitable generators for $\delta^{(1)}\catlie$. 
Recall that, for $a, b \in \nat$, $\delta^{(1)} \catlie (a,b) \subset \catlie (a, b \boxplus 1) = \catlie (a, b+1)$. 

\begin{nota}
\label{nota:iota}
For $0<a \in \nat$, let $\iota_a\in \delta^{(1)} \catlie(a,a-1)$ be the element corresponding to $\id_a \in \catlie (a,a)$.
(By convention, $\iota_0=0$.)
\end{nota}

This family provides generators for $\delta^{(1)} \catlie$ in the following sense:

\begin{lem}
\label{lem:iota_generate}
The family $\{ \iota_a \ | \ a \in \nat \}$ generates $\delta^{(1)} \catlie $ in both of the following ways:
\begin{enumerate}
\item 
as a right $\catlie$-module; 
\item 
as a left $\catlie$, right $\rat\fb$ bimodule. 
\end{enumerate}
\end{lem}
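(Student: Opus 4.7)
The plan is to verify the two generation claims by directly unraveling the explicit descriptions established earlier. For (1), I would first identify $\iota_{n+1}\in\delta^{(1)}\catlie(n+1,n)$ with $\id_{n+1}\in\catlie(n+1,n+1)=\delta\catlie(n+1,n)$ under the inclusion $\delta^{(1)}\catlie\hookrightarrow\delta\catlie$. Proposition \ref{prop:delta1_catlie_bimodule} defines the right $\catlie$-module structure on $\delta^{(1)}\catlie$ precisely so that $\pi:\delta\catlie\twoheadrightarrow\delta^{(1)}\catlie$ is a morphism of right $\catlie$-modules; since $\pi$ is moreover a retract of the inclusion, for $\psi\in\catlie(m,n+1)$ one computes
\[
\iota_{n+1}\cdot\psi \;=\; \pi\bigl(\id_{n+1}\circ\psi\bigr) \;=\; \pi(\psi),
\]
the composition being taken inside $\catlie$ via $\delta\catlie(-,n)=\catlie(-,n+1)$. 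Surjectivity of $\pi$ then implies that every element of $\delta^{(1)}\catlie(m,n)$ is of this form, so already $\iota_{n+1}$ alone generates the target-$n$ part as a right $\catlie$-module.

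For (2), I would describe $\xi\in\delta^{(1)}\catlie(m,n)$ via its underlying surjection $f\colon\mathbf{m}\twoheadrightarrow\mathbf{n+1}$ (which by definition has $|f^{-1}(n+1)|=1$), decorated by a Lie element in each fibre. Let $j\in\mathbf{m}$ denote the unique element of $f^{-1}(n+1)$, choose $\sigma\in\sym_m$ with $\sigma(m)=j$, and act on $\xi$ on the right by $\sigma$ through the inclusion $\rat\fb\hookrightarrow\catlie$: the result $\xi\cdot\sigma$ has singleton fibre over $n+1$ at input $m$, and hence takes the form $\phi\boxplus\id_1$ for a unique $\phi\in\catlie(m-1,n)$. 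The left $\catlie$-action on $\delta\catlie$ acts on the underlying $\catlie$-morphisms by $\phi\cdot\eta=(\phi\boxplus\id_1)\circ\eta$; combined with $\iota_m\leftrightarrow\id_m$, this gives $\phi\cdot\iota_m=\phi\boxplus\id_1$, and therefore $\xi=\phi\cdot\iota_m\cdot\sigma^{-1}$ lies in the sub left $\catlie$, right $\rat\fb$ bimodule generated by $\iota_m$.

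The principal bookkeeping obstacle is to keep the identifications $\delta\catlie(a,b)=\catlie(a,b+1)$ and $\delta^{(1)}\catlie\subset\delta\catlie\cong\catlie\odot\lie$ coherent with the fact that the right $\catlie$-action on $\delta^{(1)}\catlie$ is not the naive restriction from $\delta\catlie$ but rather the one transferred through $\pi$. Once these conventions are pinned down, neither claim requires any further computation beyond the identities displayed above.
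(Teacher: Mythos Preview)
Your argument is correct and, for part (2), essentially identical to the paper's: both reduce an arbitrary element supported on a surjection $f$ to one with $f^{-1}(n+1)=\{m\}$ via the right $\sym_m$-action, then recognise the result as $\phi\cdot\iota_m$ for suitable $\phi\in\catlie(m-1,n)$ via the left action.

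For part (1) the paper simply asserts the claim is ``clear'' and gives no further detail, whereas you supply an explicit argument through $\pi$: since $\pi$ is a surjective morphism of right $\catlie$-modules and $\pi(\id_{n+1})=\iota_{n+1}$, one has $\iota_{n+1}\cdot\psi=\pi(\id_{n+1})\cdot\psi=\pi(\psi)$, whence surjectivity of $\pi$ gives generation. This is a perfectly valid unpacking and arguably what the paper intends; it has the merit of making transparent why the non-naive right $\catlie$-structure on $\delta^{(1)}\catlie$ (transferred via $\pi$ rather than restricted from $\delta\catlie$) does not obstruct the generation statement. One small point: in (2) you implicitly treat a general $\xi$ as supported on a single surjection $f$; this is harmless since such elements span, but it would do no harm to say so.
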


\begin{proof}
That the elements generate $\delta^{(1)}\catlie$ as a right $\catlie$-module is clear.

The left $\catlie$-module generated by these elements when evaluated on $(a,b)$ gives the subspace  
 $$\bigoplus_f \bigotimes_{i=1}^{b+1} \lie( |f^{-1} (i)|)$$
 of $\delta \catlie (a,b)$, where $f$ ranges over the surjective set maps $f : \mathbf{a} \twoheadrightarrow \mathbf{b+1}$ such that $f^{-1} (b+1) =\{ a \}$. Once the right action by $\sym_a$ is taken into account, one obtains all of $\delta^{(1)} \catlie (a,b)$. 
\end{proof}

\begin{thm}
\label{thm:DG_category}
The differential $\mut^{(1)} : \delta^{(1)} \catlie[1] \rightarrow \catlie$ makes $(\catlie \oplus \delta^{(1)}\catlie [1], \mut^{(1)})$ into a differential graded category.

The inclusion $(\catlie , 0) \hookrightarrow ( \catlie \oplus \delta^{(1)}\catlie[1], \mut^{(1)})$ of the subcomplex in homological degree $0$ is a functor of DG categories.
\end{thm}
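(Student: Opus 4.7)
The plan is to apply the many-object analogue of Lemma \ref{lem:DGA_equivalent}, with the monoidal structure on $\rat\fb$-bimodules given by $\otimes_\fb$ replacing $\otimes$: $\catlie$ plays the role of the algebra $A$, $\delta^{(1)} \catlie$ plays the role of the bimodule $M$, and $d = \mut^{(1)}$. Since $\mut^{(1)}$ is a morphism of $\catlie$-bimodules by Proposition \ref{prop:mut1_bimodule}, the Leibniz rule for the composition of a degree-$0$ element with a degree-$1$ element (on either side) encodes precisely the left- and right-$\catlie$-module compatibility of $\mut^{(1)}$ and is therefore automatic. The condition $d^2=0$ is trivial since $\catlie \oplus \delta^{(1)}\catlie[1]$ is concentrated in degrees $0$ and $1$, and the Leibniz rule on two degree-$0$ elements reduces to associativity of composition in $\catlie$. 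Hence only the analogue of condition (2) in Lemma \ref{lem:DGA_equivalent} needs to be checked: that the two composites
\[
\delta^{(1)}\catlie \otimes_\fb \delta^{(1)}\catlie
\rightrightarrows
\delta^{(1)}\catlie,
\]
given respectively by $(f,g) \mapsto \mut^{(1)}(f) \cdot g$ and $(f,g) \mapsto f \cdot \mut^{(1)}(g)$ using the left and right $\catlie$-actions on $\delta^{(1)}\catlie$, agree.

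Both composites are morphisms of $\catlie$-bimodules, so (following Remark \ref{rem:generators}) it suffices to verify this equality on a family of generators for $\delta^{(1)}\catlie \otimes_\fb \delta^{(1)}\catlie$ as a $\catlie$-bimodule. By Lemma \ref{lem:iota_generate}, the elements $\iota_a$ generate $\delta^{(1)}\catlie$ both as a right $\catlie$-module and as a left $\catlie$, right $\rat\fb$ bimodule, so the reduction is to the family $\iota_{a-1} \otimes \iota_a$ for $a \geq 2$.

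The main obstacle is this explicit verification, which I would carry out by direct calculation. By Corollary \ref{cor:mut} and Notation \ref{nota:iota}, one first records $\mut^{(1)}(\iota_a) = \mu(a-1)$. Unwinding the left $\catlie$-action on $\delta^{(1)}\catlie$ inherited from $\delta\catlie$, the left-hand composite $\mu(a-2) \cdot \iota_a$ evaluates to $\mu(a-2) \boxplus \id_1 \in \catlie(a, a-1)$, which indeed lies in $\delta^{(1)}\catlie(a,a-2)$. For the right-hand composite, Proposition \ref{prop:delta1_catlie_bimodule} and the identification $\pi(\id_b) = \iota_b$ reduce it to $\pi(\mu(a-1))$. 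Expanding the defining sum of $\mu(a-1)$: all summands whose Lie bracket involves only the first $a-1$ inputs are fixed by $\pi$, while the single summand whose bracket involves the distinguished last input is expanded via the second relation of Lemma \ref{lem:understand_pi}. The second term produced by this expansion cancels against the other summands, leaving exactly $\mu(a-2) \boxplus \id_1$; this is the technical crux of the proof.

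Finally, the inclusion $(\catlie, 0) \hookrightarrow (\catlie \oplus \delta^{(1)}\catlie[1], \mut^{(1)})$ is immediately a DG functor: it identifies $\catlie$ with the subcomplex in homological degree $0$, on which the differential vanishes on both sides, and composition in the square-zero extension restricts to composition in $\catlie$ on this subcomplex by construction.
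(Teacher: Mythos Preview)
Your proof is correct and follows the same route as the paper's: both reduce via Lemma \ref{lem:DGA_equivalent} and the generators of Lemma \ref{lem:iota_generate} to checking that the two composites agree on $\iota_n \otimes \iota_{n+1}$, and both verify this by splitting $\mu(n)$ (the paper writes $\mu(n)=\mu'(n)+\mu''(n)$) and computing the right $\catlie$-action via $\pi$ to obtain $\mu(n-1)\boxplus 1$ on each side. One wording point to fix: the distinction governing $\pi$ is whether the bracket sits at the distinguished last \emph{output} (position $a-1$ in $\delta\catlie(a,a-2)=\catlie(a,a-1)$), not which \emph{inputs} it involves --- every summand of $\mu(a-1)$ brackets the last input $x_a$ with some $x_i$, so your phrase ``involves only the first $a-1$ inputs'' does not actually separate the cases; the summands with $i<a-1$ lie in $\delta^{(1)}\catlie$ (hence are fixed by $\pi$) precisely because their bracket is not at output $a-1$.
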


\begin{proof}
By Lemma \ref{lem:DGA_equivalent} (adapted to the current setting), it suffices to show that, for all natural numbers $m,n,t \in \nat$, the following diagram commutes:
\[
\xymatrix{
\delta^{(1)} \catlie (n,t) \otimes _{\sym_n} \delta^{(1)} \catlie (m,n) 
\ar[rr]^{\id \otimes \mut^{(1)}}
\ar[d]_{\mut^{(1)} \otimes \id}
&&
\delta^{(1)} \catlie (n,t) \otimes_{\sym_n} \catlie (m,n) 
\ar[d]
\\
\catlie (n,t) \otimes _{\sym_n} \delta^{(1)} \catlie (m,n) 
\ar[rr]
&&
\delta^{(1)}\catlie (m,t),
}
\]
in which the unlabelled maps are given by the left and right $\catlie$-module structures of $\delta^{(1)}\catlie$ respectively.

Using Notation \ref{nota:iota}, one has the elements $\iota_n \in \delta^{(1)} \catlie (n, n-1)$  and $\iota_{n+1} \in \delta^{(1)} \catlie (n+1, n)$ (we may assume $n \geq 1$ here). Then, by using Lemma \ref{lem:iota_generate} and 
arguing as in Remark \ref{rem:generators}, one reduces to studying the images of the following family:
 $$
 \iota_n \otimes \iota_{n+1} \in \delta^{(1)} \catlie (n,n-1) \otimes _{\sym_n} \delta^{(1)} \catlie (n+1,n)
\quad
1 \leq n \in \nat 
 .
 $$ 
For each $n$, one considers the two images of $\iota_n \otimes \iota_{n+1}$ in $\delta^{(1)}\catlie (n+1,n-1)$ given by the two paths around the above square. We require to show that these are equal.

By the definition of $\mut$ and of the elements $\mu (*)$, one has 
\begin{eqnarray*}
\mut^{(1)} (\iota_n)&=&\mu (n-1)\in \catlie(n,n-1) ; 
\\
\mut^{(1)} (\iota_{n+1})&= &\mu (n) \in \catlie (n+1,n).
\end{eqnarray*}

Hence, passage around the bottom of the diagram gives $\mu(n-1) \boxplus 1$, considered as an element of $\delta^{(1)}\catlie (n+1,n-1)  \subset  \catlie (n+1,(n-1) \boxplus 1)= \catlie(n+1,n)$. This is represented by the diagram 

\begin{center}
 \begin{tikzpicture}[scale = .2]
 \draw (1,1) -- (1,-3);
 \draw [dotted] (3,1) -- (3,-3);
 \draw (5,1) -- (5,-3);
 \draw (9,1) -- (9,-3);
\draw [rounded corners] (7,1) -- (7,-1) -- (6,-1);
 \draw [fill= lightgray] (0,0) -- (6,0) -- (6, -2)  -- (0, -2) -- cycle;
 \node at (11,-3) {.};
 \end{tikzpicture}
 \end{center}
 
To pass around the top of the diagram, we have to use the right $\catlie$-module structure of $\delta^{(1)} \catlie$. For this, we write $\mu (n)$ as $\mu'(n) + \mu''(n)$, given by 

 \begin{center}
 \begin{tikzpicture}[scale = .2]
\node at (-5,-1) {$\mu'(n) :=$};
 \draw (1,1) -- (1,-3);
 \draw [dotted] (3,1) -- (3,-3);
 \draw (5,1) -- (5,-3);
 \draw (7,1) -- (7,-3);
 \draw [fill=white, white] (7,-1) circle (0.2);
\draw [rounded corners] (9,1) -- (9,-1) -- (6,-1);
 \draw [fill= lightgray] (0,0) -- (6,0) -- (6, -2)  -- (0, -2) -- cycle;
 \end{tikzpicture}
 \end{center} 
 
 \begin{center}
 \begin{tikzpicture}[scale = .2]
 \node at (8,-1) {$\mu''(n) :=$};
\draw (14,1) -- (14,-3) ;
 \draw [dotted] (16,1) -- (16,-3);
 \draw (18,1) -- (18,-3) ;
 \draw (20,1) -- (20,-3) ;
\draw  [rounded corners] (22,1) -- (22,-1) -- (20,-1); 
\draw [fill= black] (20,-1) circle (0.2);
\node at (22,-3) {.};
 \end{tikzpicture}
 \end{center}
(Note that $\mu'(1)=0$.) 
 
The action of $\mu'(n)$ on the right on $\iota_{n-1}$ does not involve $\pi$, whereas the action of $\mu''(n)$ does.  
  Explicitly, by the definition of the right $\catlie$-module structure of $\delta^{(1)}\catlie$,  the action of $\mu''(n)$ on  $\iota_{n-1}$ gives the following element (considered as belonging to $\delta^{(1)} \catlie (n+1, n-1)$):
 \begin{center}
 \begin{tikzpicture}[scale = .2]
 \draw (1,1) -- (1,-3);
 \draw [dotted] (3,1) -- (3,-3);
 \draw (5,1) -- (5,-3);
 \draw (9,1) -- (9,-3);
\draw [rounded corners] (7,1) -- (7,-1) -- (6,-1);
 \draw [fill= lightgray] (0,0) -- (6,0) -- (6, -2)  -- (0, -2) -- cycle;
 \node at (11,-1) {-};
 
  \draw (14,1) -- (14,-3);
 \draw [dotted] (16,1) -- (16,-3);
 \draw (18,1) -- (18,-3);
 \draw (20,1) -- (20,-3);
 \draw [fill=white, white] (20,-1) circle (0.2);
\draw [rounded corners] (22,1) -- (22,-1) -- (19,-1);
 \draw [fill= lightgray] (13,0) -- (19,0) -- (19, -2)  -- (13, -2) -- cycle;
 \node at (22,-3) {.};
 \end{tikzpicture}
 \end{center}

Summing with the contribution from the action of  $\mu' (n)$ thus gives the element written as $\mu (n-1) \boxplus 1$ above. This establishes the commutativity of the diagram and hence the result.
\end{proof}

\begin{nota}
\label{nota:dgcat}
Write $\dgcat$ for the DG category $ (\catlie \oplus \delta^{(1)}\catlie [1], \mut^{(1)})$, equipped with the inclusion 
$
\catlie \hookrightarrow \dgcat
$ 
of DG categories, where $\catlie$ is considered as a DG category with morphisms concentrated in homological degree zero.
\end{nota}

 \section{The Chevalley-Eilenberg complex}
 \label{sect:ce}

 The purpose of this section is to explain the relationship between the complex of $\catlie$-bimodules underlying $\dgcat$ and the universal Chevalley-Eilenberg complex related to calculating the Lie algebra homology $H_* (\g; \underline{\g})$, for any $\rat$-Lie algebra $\g$. 
 
 \subsection{Constructing the complex}
 
 Recall that, for a Lie algebra $\g$ and a right $\g$-module $M$, the Chevalley-Eilenberg complex $\ce(\g; M)$ has the form $(M \otimes \Lambda^* \g, d)$, where $M\otimes \Lambda^t \g$ is in homological degree $t$. The tail of the complex is thus 
 \[
 \ldots 
 \rightarrow 
 M \otimes \Lambda^2 \g
 \rightarrow 
 M \otimes \g 
 \rightarrow 
 M
 \]
 with differentials given for $m \in M$ and $x,y\in \g$ by:
 \begin{eqnarray*}
 m \otimes (x \wedge y) & \mapsto & m\cdot x \otimes y  - m \cdot y \otimes x - m \otimes [x,y] \\
 m \otimes x & \mapsto & m\cdot x,
 \end{eqnarray*}
where $\cdot$ denotes the action of $\g$ on $M$. The homology of $\ce(\g; M)$ is the Lie algebra homology $H_* (\g; M)$ of $\g$ with coefficients in the module $M$.

In particular, this can be applied taking $M = \g^{\otimes n}$, the $n$-fold tensor product of the (right) adjoint representation. These assemble to $\underline{\g}$ using the universal example $\mut$, via Proposition \ref{prop:mut_Lie_action}, which gives that 
\[
\mut : \catlie \odot \lie \rightarrow \catlie 
\]
is a morphism of $\catlie$-bimodules which makes $\catlie$ a right $\lie$-module, in the sense of Definition \ref{defn:lie-modules}.

Using the tensor product $- \otimes_{\catlie}\underline{\g}$, $\mut$ gives the morphism of left $\catlie$-modules 
$
\underline{\g} \otimes \g \rightarrow \underline{\g},
$
 where the action is given by that on $\underline{\g}$. In arity $n$, this coincides with the $n$-fold tensor product of the right adjoint representation $\g^{\otimes n} \otimes \g \rightarrow \g^{\otimes n}$, by the construction of $\mut$. 

This gives the following:

\begin{prop}
\label{prop:CE_g}
For a Lie algebra $\g$, $\ce (\g; \underline{\g})$ is naturally a complex of left $\catlie$-modules. Hence the Lie algebra homology groups $H_*(\g; \underline{\g})$ take values naturally in left $\catlie$-modules.  
\end{prop}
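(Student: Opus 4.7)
The plan is to equip each graded piece of $\ce(\g;\underline{\g})$ with the left $\catlie$-module structure inherited from the $\underline{\g}$ factor, and then to show that the standard Chevalley--Eilenberg differential respects this structure; the homology claim will then follow for formal reasons. For each $t \in \nat$, I would regard $\underline{\g}\otimes \Lambda^t \g$ as a left $\catlie$-module by the rule $(\underline{\g}\otimes \Lambda^t \g)(n) = \g^{\otimes n}\otimes \Lambda^t \g$, with $\catlie$ acting on the first tensor factor via the canonical structure of Example \ref{exam:gbar} and with $\Lambda^t \g$ treated as a fixed coefficient vector space. The whole graded object $\underline{\g}\otimes \Lambda^* \g$ is then a graded left $\catlie$-module.

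Next, I would split the Chevalley--Eilenberg differential into its two classical kinds of summands. The \emph{bracket} summands, sending $m\otimes (x_1\wedge\cdots\wedge x_t)$ to $(-1)^{i+j}\, m\otimes [x_i,x_j]\wedge x_1\wedge\cdots\widehat{x_i}\cdots\widehat{x_j}\cdots\wedge x_t$, act as the identity on the $\underline{\g}$ factor and modify only the $\Lambda^* \g$ part by a fixed $\rat$-linear operation; since the $\catlie$-action lives entirely on the first factor, these summands are manifestly morphisms of left $\catlie$-modules. The \emph{action} summands, $m \otimes (x_1\wedge\cdots\wedge x_t) \mapsto (-1)^{i+1}(m\cdot x_i)\otimes x_1\wedge\cdots\widehat{x_i}\cdots\wedge x_t$, factor (after a $\catlie$-equivariant shuffle on the coefficient space $\Lambda^t\g$) through the map $\underline{\g}\otimes \g\otimes \Lambda^{t-1}\g \to \underline{\g}\otimes \Lambda^{t-1}\g$ obtained by applying the right adjoint action $\underline{\g}\otimes \g \to \underline{\g}$ to the first two factors and the identity to the third. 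By the paragraph preceding the statement, this adjoint action is the morphism of left $\catlie$-modules deduced from $\mut : \catlie \odot \lie \to \catlie$ by the base change $-\otimes_\catlie \underline{\g}$, using that $\mut$ is a morphism of $\catlie$-\emph{bimodules} (Proposition \ref{prop:mut_Lie_action}). In particular, the action summands are $\catlie$-equivariant as well.

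Summing the two kinds of summands, the total CE differential is a morphism of left $\catlie$-modules, so $\ce(\g;\underline{\g})$ is a chain complex in $\lmod$. The final assertion about $H_*(\g;\underline{\g})$ is then automatic, since $\lmod$ is an abelian category in which kernels and cokernels are computed pointwise. The only non-formal input is the $\catlie$-equivariance of the adjoint action $\underline{\g}\otimes\g\to\underline{\g}$; this is where all of the content resides, and it has already been arranged through Proposition \ref{prop:mut_Lie_action}. Everything else is bookkeeping with Koszul signs and the compatibility of $-\otimes_\rat \Lambda^t \g$ with the left $\catlie$-action on $\underline{\g}$.
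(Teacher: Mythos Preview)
Your proof is correct and follows essentially the same approach as the paper. The paper does not give a separate proof of this proposition, instead deducing it from the discussion immediately preceding the statement, whose key point is precisely that the adjoint action $\underline{\g}\otimes\g\to\underline{\g}$ is a morphism of left $\catlie$-modules (obtained from $\mut$ via $-\otimes_{\catlie}\underline{\g}$, using Proposition~\ref{prop:mut_Lie_action}); your argument spells out in detail how this single input, together with the evident $\catlie$-equivariance of the bracket summands, makes the full Chevalley--Eilenberg differential $\catlie$-linear.
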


As already suggested by the usage of $\mut$ to give the left $\catlie$-module structure in Proposition \ref{prop:CE_g}, it is natural to consider the universal example. This corresponds to working in the symmetric monoidal category $(\rmod, \odot , \rat)$ rather than $(\qmod, \otimes, \rat)$, using the fact that $\lie$ is a Lie algebra in $\rmod$. 

\begin{nota}
Write $\ce_\lie$ for the universal Chevalley-Eilenberg complex that encodes $\g \mapsto \ce(\g; \underline{\g})$.  
\end{nota}

\begin{rem}
This complex $\ce_\lie$ can be constructed using the Schur correspondence as follows. The associated complex of Schur functors $V \mapsto   \ce_\lie (V)$ is 
\[
\ce_\lie (V) = \ce( \liealg (V); \underline{\liealg(V)} ); 
\]
the right $\catlie$-module structure is induced by the composition morphism $\liealg (\liealg (V)) \rightarrow \liealg (V)$ corresponding to the operadic composition $\lie \circ \lie \rightarrow \lie$. 
\end{rem}

The key properties are resumed in the following:

\begin{prop}
\label{prop:CE_universal}
\ 
\begin{enumerate}
\item 
The complex $\ce_\lie$ is a complex of $\catlie$-bimodules. 
\item 
Restricted to the right $\catlie$-module structure, $\ce_\lie$ is a complex of projective right $\catlie$-modules.
\item
For a Lie algebra $\g$, there is a natural isomorphism of complexes of left $\catlie$-modules
\[
\ce( \g; \underline{\g}) \cong \ce_\lie \otimes_{\catlie} \underline{\g}.
\]
\end{enumerate}
\end{prop}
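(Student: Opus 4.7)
The plan is to construct $\ce_\lie$ term-by-term as a complex of $\catlie$-bimodules and derive each property in turn. Set $\ce_\lie^t := \catlie \odot \Lambda^t \lie$, placed in homological degree $t$, where $\Lambda^t$ is the antisymmetric power in the symmetric monoidal category $(\rmod, \odot, \rat)$, and the $\catlie$-bimodule structure is supplied by Proposition \ref{prop:convolution_bimod_rmod} applied to the bimodule $\catlie$ and the right $\catlie$-module $\Lambda^t \lie$ (a summand of $\lie^{\odot t}$, using characteristic zero). The differential $d_t \colon \ce_\lie^t \rightarrow \ce_\lie^{t-1}$ is the signed sum of an \emph{action} morphism, which extracts one factor from $\Lambda^t \lie$ and applies $\mut \colon \catlie \odot \lie \rightarrow \catlie$ (Proposition \ref{prop:mut_Lie_action}), and a \emph{bracket} morphism, which extracts two factors and collapses them via $[-,-] \colon \lie \odot \lie \rightarrow \lie$. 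Both contributions are morphisms of $\catlie$-bimodules. The vanishing $d^2 = 0$ is the abstract shadow of the standard Chevalley-Eilenberg computation: its three components cancel thanks to, respectively, the right $\lie$-action axiom (Definition \ref{defn:lie-modules}) satisfied by $\mut$ via Proposition \ref{prop:mut_Lie_action}, the Jacobi identity for $[-,-]$, and the natural compatibility between action and bracket. This establishes (1).

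For (2), iterating Lemma \ref{lem:delta_catlie} (using that $\delta$ commutes with $-\odot \lie$ applied on the right in the convolution sense) yields a $\catlie$-bimodule isomorphism $\catlie \odot \lie^{\odot t} \cong \delta^t \catlie$. Restricted to the right $\catlie$-module structure and evaluated at left index $n$, $\delta^t \catlie(-, n) = \catlie(-, n+t)$ is the representable right $\catlie$-module at $n+t$, hence projective. Since $\rat$ has characteristic zero, $\Lambda^t \lie$ is a direct summand of $\lie^{\odot t}$ via the antisymmetrization idempotent $\tfrac{1}{t!}\sum_{\sigma \in \sym_t} \mathrm{sgn}(\sigma)\sigma$, so $\ce_\lie^t = \catlie \odot \Lambda^t \lie$ is a summand of $\catlie \odot \lie^{\odot t} \cong \delta^t \catlie$ as a right $\catlie$-module, yielding the projectivity claim.

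For (3), the key ingredient is the natural isomorphism
\[
(M \odot N) \otimes_\catlie \underline{\g} \cong (M \otimes_\catlie \underline{\g}) \otimes_\rat (N \otimes_\catlie \underline{\g})
\]
for right $\catlie$-modules $M$, $N$, which is the ``evaluate on the $\lie$-algebra $\g$'' version of Example \ref{exam:fb_schur}. Applied to $\ce_\lie^t$, and using the tautology $\catlie \otimes_\catlie \underline{\g} \cong \underline{\g}$ together with the standard identity $\lie \otimes_\catlie \underline{\g} \cong \g$ (the value of the Lie operad on the $\lie$-algebra $\g$ is $\g$ itself), one obtains $\ce_\lie^t \otimes_\catlie \underline{\g} \cong \underline{\g} \otimes \Lambda^t \g$, the $t$-th term of $\ce(\g;\underline{\g})$. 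The action-contribution to $d$ specializes to the right adjoint action of $\g$ on $\underline{\g}$ (cf.\ Example \ref{exam:mut_bimodule}), and the bracket-contribution specializes to the Lie bracket on $\g$, reproducing the classical differential.

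The main obstacle lies in (3): careful bookkeeping of Koszul signs through the symmetric monoidal identifications, and verification that the universally defined differential specializes on $-\otimes_\catlie \underline{\g}$ to the classical Chevalley-Eilenberg differential with matching signs. A useful sanity check is to work Schur-functorially on the universal example $\g = \liealg(V)$, where the construction tautologically recovers $\ce(\liealg(V); \underline{\liealg(V)})$ via Example \ref{exam:gbar}, and then extend to arbitrary Lie algebras by naturality in $\g$.
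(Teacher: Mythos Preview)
Your proof is correct and essentially parallels the paper's, though with a different emphasis. The paper \emph{defines} $\ce_\lie$ via the Schur correspondence as the object whose associated Schur functor is $V \mapsto \ce(\liealg(V); \underline{\liealg(V)})$, so that (1) becomes an immediate corollary of Proposition~\ref{prop:CE_g}, (3) is tautological for free Lie algebras and follows for general $\g$ from the identity $\lie \otimes_{\catlie} \underline{\g} \cong \g$, and (2) is argued exactly as you do (the $t$th term at left index $n$ is a summand of $\catlie(-, n+t)$). You instead construct $\ce_\lie$ directly as $\catlie \odot \Lambda^t \lie$ with bimodule structure supplied by Proposition~\ref{prop:convolution_bimod_rmod}, and then verify the properties; your iteration $\catlie \odot \lie^{\odot t} \cong \delta^t \catlie$ is the explicit form of the paper's summand argument, and your monoidal compatibility $(M \odot N)\otimes_\catlie \underline{\g} \cong (M \otimes_\catlie \underline{\g}) \otimes (N \otimes_\catlie \underline{\g})$ unpacks what the paper leaves as ``by construction''. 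Your approach buys explicitness (the bimodule structure and differential are visibly built from $\mut$ and $[-,-]$, both morphisms in $\bimod$); the paper's buys brevity, since the Schur-first definition makes the link to the classical complex immediate and offloads $d^2=0$ and the sign bookkeeping to the classical case.
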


\begin{proof}
The first statement is an immediate generalization of Proposition \ref{prop:CE_g}. Indeed, using the Schur correspondence and the construction of $\ce_\lie$ outlined above, it is a Corollary of Proposition \ref{prop:CE_g}.

The second statement follows from the observation that, for a Lie algebra $\g$, the $t$th term $\g^{\otimes n} \otimes \Lambda^t \g$ of $\ce(\g; \g^{\otimes n}) $ is naturally (with respect to $\g$) a direct summand of $\g^{\otimes n+t}$, since we are working over $\rat$. This implies that, for the universal complex, the $t$th term of $\big( \ce_\lie \big) (-, n)$ is a direct summand of $\catlie (-, n+t)$ as a right $\catlie$-module. (Here $n$ is the degree corresponding to the left $\catlie$-module structure.) In particular, this implies that the $t$th term of $\ce_\lie$ is projective as a right $\catlie$-module.

The final statement essentially holds by construction. It generalizes the tautological fact that the Lie algebra structure of $\lie$ in right $\catlie$-modules induces an isomorphism of Lie algebras 
$
\g \cong \lie \otimes_{\catlie} \underline{\g}$.
\end{proof}

\begin{rem}
\label{rem:gen_CE_homology}
\ 
\begin{enumerate}
\item 
More generally, one can consider the functor 
\[
\ce_\lie \otimes _{\catlie} - \ : \ \lmod \rightarrow \mathrm{Ch}(\lmod)
\]
from left $\catlie$-modules to chain complexes of left $\catlie$-modules, together with the associated homology with values in left $\catlie$-modules:
\[
M \mapsto H_* (\ce_\lie \otimes_{\catlie} M). 
\]
Since $\ce_\lie$ has terms that are projective as right $\catlie$-modules, this is a homological functor on $\lmod$.
 When evaluated with $M = \underline{\g}$, for a Lie algebra $\g$, one recovers $H_* (\g; \underline{\g})$.
 \item 
 \label{item:CE_hom_left_derived_functors} 
The functor  $ M \mapsto H_* (\ce_\lie \otimes_{\catlie} M)$ is related to the left derived functors of $M \mapsto H_0
(\ce_\lie \otimes_{\catlie} M)$. However, they do not coincide, since $H_1$ does not in general vanish when $M$ is a projective left $\catlie$-module, as is seen by considering the case of free Lie algebras.
\end{enumerate}
\end{rem}

\subsection{Relating $\ce_\lie$ and $\dgcat$ as complexes of $\catlie$-bimodules}

In this subsection, $\dgcat$ is considered only as a complex of $\catlie$-bimodules. 

\begin{rem}
\label{rem:subcomplex}
As a complex of left $\catlie$-modules, $\dgcat$ was essentially constructed as a subcomplex of $\ce_\lie$ (cf. Notation \ref{nota:mut1}). Note, however, that this is not a subcomplex as a $\catlie$-bimodule.
\end{rem}

\begin{thm}
\label{thm:ce_dgcat}
The morphism $\pi :\delta \catlie \twoheadrightarrow \delta^{(1)} \catlie$ of $\catlie$-bimodules induces a morphism of complexes of $\catlie$-bimodules 
\[
\ce_\lie \twoheadrightarrow \dgcat
\]
which is a weak equivalence. 

This is  determined by the commutative diagram in $\catlie$-bimodules:
\[
\xymatrix{
(\ce_\lie)_2 
\ar[r]
\ar[d]
&
(\ce_\lie)_1 = \delta \catlie 
\ar@{->>}[d]_\pi 
\ar[r]^{\mut}
&
(\ce_\lie)_0 = \catlie 
\ar@{=}[d]
\\
0 
\ar[r]
&
\delta^{(1)}\catlie
\ar[r]_{\mut^{(1)}}
 &
 \catlie.
}
\]
\end{thm}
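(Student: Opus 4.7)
My plan is to define the comparison map with the prescribed components---identity in degree $0$, the surjection $\pi$ in degree $1$, and zero in degrees $\geq 2$---and then separately verify the chain condition and the quasi-isomorphism property. Proposition \ref{prop:delta1_catlie_bimodule} ensures that $\pi$ is a morphism of $\catlie$-bimodules, so the comparison map automatically lands in $\catlie$-bimodules, and surjectivity is immediate.

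For the chain condition, the right-hand square of the diagram in the statement commutes by Lemma \ref{lem:mut_mut1_compat_pi}. The only remaining requirement is that the composite $(\ce_\lie)_2 \xrightarrow{d_2} \delta\catlie \stackrel{\pi}{\to} \delta^{(1)}\catlie$ vanish. I would verify this after applying the Schur correspondence (to the right $\rat\fb$-module structure obtained from the right $\catlie$-module structure by restriction): the Chevalley-Eilenberg differential $d_2(V)$ sends $\mathfrak{Z} \otimes (X \wedge Y)$ to $\mathfrak{Z}\cdot X \otimes Y - \mathfrak{Z}\cdot Y \otimes X - \mathfrak{Z} \otimes [X,Y]$, and applying $\pi(V)$ to this expression yields zero by the second recursive identity of Lemma \ref{lem:understand_pi}. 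Compatibility in degrees $\geq 2$ is automatic since $\dgcat$ vanishes there.

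For the weak equivalence, I would pass to the short exact sequence
\[
0 \to K \to \ce_\lie \twoheadrightarrow \dgcat \to 0
\]
of complexes of $\catlie$-bimodules, with $K_0 = 0$, $K_1 = \ker\pi$, and $K_t = (\ce_\lie)_t$ for $t \geq 2$. (That $K$ is a sub-complex follows from the chain condition just established and from the factorisation $\mut = \mut^{(1)} \circ \pi$, which gives $\ker\pi \subseteq \ker\mut$.) It then suffices to show that $K$ is acyclic. I would test this via the Schur correspondence: on evaluation at $V$, $\ce_\lie$ becomes the full Chevalley-Eilenberg complex $\ce(\liealg(V); \underline{\liealg(V)})$ and $\dgcat$ becomes the two-term complex (\ref{eqn:cx_V}). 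The classical fact recalled in the introduction---that (\ref{eqn:cx_V}) computes $H_*(\liealg(V); \liealg(V)^{\otimes n})$---follows from the length-one free resolution $T(V) \otimes V \to T(V)$ of $\rat$ over $U\liealg(V) = T(V)$. The resulting standard comparison quasi-isomorphism from the Chevalley-Eilenberg complex to (\ref{eqn:cx_V}) agrees, up to the explicit identifications of Lemma \ref{lem:understand_pi}, with $\pi(V)$ in degree $1$, so $K(V)$ is acyclic.

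The principal obstacle is making rigorous the final identification of $\pi(V)$ with the standard comparison quasi-isomorphism rather than merely observing that source and target compute abstractly isomorphic homology. If this proves technically awkward, a cleaner fallback is to build a contracting homotopy for $K(V)$ directly by induction on bracket length in $\liealg(V)$, exploiting that $\pi(V)$ splits the inclusion $\underline{\liealg(V)} \otimes V \hookrightarrow \underline{\liealg(V)} \otimes \liealg(V)$ and that the Chevalley-Eilenberg differentials interact predictably with this splitting.
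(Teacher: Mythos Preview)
Your chain-condition argument is essentially the paper's: same appeal to Lemma \ref{lem:mut_mut1_compat_pi} for the right square and the same use of the Schur correspondence together with Lemma \ref{lem:understand_pi} to kill $\pi\circ d_2$. (The paper adds one small simplification you omit: because these are morphisms of right $\catlie$-modules, one may restrict to $\Lambda^2(V)\subset\Lambda^2(\liealg(V))$ before invoking Lemma \ref{lem:understand_pi}.)

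For the weak equivalence, however, the paper takes a shorter route than your kernel/acyclicity argument. It observes that the surjection $\ce_\lie\twoheadrightarrow\dgcat$ is, as a map of complexes of \emph{left} $\catlie$-modules, a retract of the inclusion $\dgcat\hookrightarrow\ce_\lie$ of Remark \ref{rem:subcomplex} (i.e.\ $\pi$ splits $\catlie\odot I\hookrightarrow\catlie\odot\lie$). That inclusion is a quasi-isomorphism by the standard fact that Lie algebra homology of a free Lie algebra is concentrated in degrees $0,1$; then two-out-of-three gives the result. This sidesteps precisely the obstacle you flag---you never have to identify $\pi(V)$ with ``the'' standard comparison map, only to know that $\pi$ retracts a known quasi-isomorphism. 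Your approach is not wrong, but the retraction observation makes both your main line and your fallback (a hand-built contracting homotopy) unnecessary.
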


\begin{proof}
The right hand square in the diagram is the commutative diagram given by Lemma \ref{lem:mut_mut1_compat_pi}. (Recall that $\pi$ and $\mut^{(1)}$  are morphisms of $\catlie$-bimodules by Propositions \ref{prop:delta1_catlie_bimodule} and \ref{prop:mut1_bimodule} respectively; that $\mut$ is a morphism of $\catlie$-bimodules is explained in Example \ref{exam:mut_bimodule}.)

It follows that the existence of the required chain complex map is equivalent to the vanishing of the composite $(\ce_\lie)_2 \rightarrow \delta \catlie \stackrel{\pi}{\rightarrow} \delta^{(1)}\catlie$, as asserted in the diagram. This vanishing can be established using the Schur correspondence, which reduces to considering the composite
\[
\underline{\liealg (V)} \otimes \Lambda ^2 (\liealg (V))
\rightarrow 
\underline{\liealg (V)} \otimes \liealg (V)
\stackrel{\pi}{\rightarrow}
\underline{\liealg (V)} \otimes V
\]
in which the first map is the Chevalley-Eilenberg differential. 

Since these correspond to morphisms of right $\catlie$-modules, one can reduce using the inclusion $\Lambda^2 (V) \hookrightarrow \Lambda ^2 (\liealg (V))$ induced by $V \subset \liealg (V)$ to considering the restriction:
\[
\underline{\liealg (V)} \otimes \Lambda ^2 (V)
\rightarrow 
\underline{\liealg (V)} \otimes \liealg (V)
\stackrel{\pi(V)}{\rightarrow}
\underline{\liealg (V)} \otimes V.
\]

Consider $\mathfrak{Z} \in \underline{\liealg (V)}$ and $x,y \in V$. The image of $\mathfrak{Z} \otimes x \wedge y \in \underline{\liealg (V)} \otimes \Lambda ^2 (V)$ under the Chevalley-Eilenberg differential is $\mathfrak{Z}\cdot x \otimes y 
 - \mathfrak{Z} \cdot y \otimes x - \mathfrak{Z} \otimes [x,y]$ in $\underline{\liealg (V)} \otimes \liealg (V)$. Lemma \ref{lem:understand_pi} shows that this element lies in the kernel of $\pi(V)$. This establishes the required vanishing. 
 
By construction, this gives the surjection of complexes of $\catlie$-bimodules $\ce_\lie \twoheadrightarrow \dgcat$. Moreover, it is clear that this is a retract of the inclusion (as complexes of left $\catlie$-modules) recalled in Remark \ref{rem:subcomplex}. This inclusion induces an isomorphism in homology by standard results of homological algebra, hence the result follows from two out of three.
\end{proof}

\begin{cor}
\label{cor:nat_trans_ce_dgcat_homol}
For $M \in \ob \lmod$, there is a natural surjection of complexes of left $\catlie$-modules
\[
\ce_\lie \otimes_{\catlie} M \twoheadrightarrow \dgcat \otimes_\catlie M, 
\]
induced by the surjection $\ce_\lie \twoheadrightarrow \dgcat$ of Theorem \ref{thm:ce_dgcat}.

Hence one obtains the natural transformation $H_* (\ce_\lie \otimes_{\catlie} M )\twoheadrightarrow H_*(\dgcat \otimes_\catlie M)$ of (graded) left $\catlie$-modules.  This is an isomorphism in homological degree zero and a surjection in degree one. 
\end{cor}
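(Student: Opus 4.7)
The plan is to obtain the map of complexes by simply applying the right exact functor $- \otimes_\catlie M$ to the surjection of complexes of $\catlie$-bimodules $\ce_\lie \twoheadrightarrow \dgcat$ given by Theorem \ref{thm:ce_dgcat}. Since that morphism is termwise surjective by construction, right exactness of $- \otimes_\catlie M$ yields a termwise surjective morphism of chain complexes of left $\catlie$-modules
\[
\ce_\lie \otimes_\catlie M \twoheadrightarrow \dgcat \otimes_\catlie M,
\]
and naturality in $M$ is automatic from the functoriality of $- \otimes_\catlie M$.

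For the homology statements I would \emph{not} attempt to transport the weak equivalence $\ce_\lie \to \dgcat$ directly through $- \otimes_\catlie M$; since $M$ is not assumed flat, this is a genuine obstruction. Instead, I would analyse the kernel complex
\[
K_\bullet := \ker\bigl( \ce_\lie \otimes_\catlie M \twoheadrightarrow \dgcat \otimes_\catlie M \bigr).
\]
Using the explicit description of $\ce_\lie \twoheadrightarrow \dgcat$ from Theorem \ref{thm:ce_dgcat}, the map is the identity in homological degree zero, the surjection $\pi \otimes_\catlie M$ in degree one, and the zero map $(\ce_\lie)_t \rightarrow 0$ in degrees $t \geq 2$. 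In particular $K_0 = 0$, while $K_1 = \ker(\pi \otimes_\catlie M)$ and $K_t = (\ce_\lie)_t \otimes_\catlie M$ for $t \geq 2$.

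I would then conclude using the long exact sequence in homology associated to the short exact sequence $0 \rightarrow K_\bullet \rightarrow \ce_\lie \otimes_\catlie M \rightarrow \dgcat \otimes_\catlie M \rightarrow 0$:
\[
\cdots \rightarrow H_1(K_\bullet) \rightarrow H_1(\ce_\lie \otimes_\catlie M) \rightarrow H_1(\dgcat \otimes_\catlie M) \rightarrow H_0(K_\bullet) \rightarrow H_0(\ce_\lie \otimes_\catlie M) \rightarrow H_0(\dgcat \otimes_\catlie M) \rightarrow 0.
\]
Since $K_0 = 0$, one has $H_0(K_\bullet) = 0$, which immediately forces $H_0(\ce_\lie \otimes_\catlie M) \xrightarrow{\cong} H_0(\dgcat \otimes_\catlie M)$ and makes the connecting map out of $H_1(\dgcat \otimes_\catlie M)$ zero, hence $H_1(\ce_\lie \otimes_\catlie M) \twoheadrightarrow H_1(\dgcat \otimes_\catlie M)$ as required. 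The one substantive step, rather than an obstacle, is the identification of the morphism in degrees $0$ and $1$, but this is read off directly from the defining diagram in Theorem \ref{thm:ce_dgcat}.
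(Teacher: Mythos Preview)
Your proposal is correct and follows essentially the same approach as the paper: apply the right exact functor $-\otimes_\catlie M$ to the surjection of Theorem~\ref{thm:ce_dgcat}, then use the long exact sequence in homology associated to the resulting short exact sequence of complexes, together with the observation that the kernel vanishes in homological degree zero. Your write-up is more explicit (you spell out $K_t$ in each degree and display the long exact sequence), but the underlying argument is identical.
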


\begin{proof}
The first statement follows from Theorem \ref{thm:ce_dgcat}, using the right exactness of $- \otimes_\catlie M$. 

For the second statement, one uses the long exact sequence associated to the short exact sequence of complexes of left $\catlie$-modules
\[
0
\rightarrow 
\ker \rightarrow 
\ce_\lie \otimes_{\catlie} M \rightarrow \dgcat \otimes_\catlie M
\rightarrow 
0.
\]
Since $\ker$ is zero in homological degree zero, the result follows.
\end{proof}

\begin{rem}
Whilst $\ce_\lie$ has the significant advantage over $\dgcat$ that its terms are projective as right $\catlie$-modules, we have not taken into the key additional property of $\dgcat$ that leads to its structure as a DG category (see Theorem \ref{thm:DG_category}). Moreover, the underlying complex of $\dgcat$ has the advantage of being supported in homological degrees zero and one.
\end{rem}

\subsection{An example}
\label{subsect:exam}

By Corollary \ref{cor:nat_trans_ce_dgcat_homol}, one has the natural isomorphism
\[
H_0 (\ce_\lie \otimes_{\catlie} M )\stackrel{\cong}{\rightarrow} H_0(\dgcat \otimes_\catlie M)
\]
 of  left $\catlie$-modules, for $M \in \ob \lmod$. Hence, as in Remark \ref{rem:gen_CE_homology} (\ref{item:CE_hom_left_derived_functors}), the homology $H_* (\ce_\lie \otimes_{\catlie} M )$ gives an approximation to the left derived functors of $M \mapsto H_0(\dgcat \otimes_\catlie M)$. The latter functor is of interest in relation to the theory of \cite{P_outer}, since it identifies with the functor of Proposition \ref{prop:induction} below. 
 
\begin{rem} 
Recall that there is an augmentation of operads $\lie \rightarrow I$ and this induces the augmentation $\catlie \rightarrow \rat \fb$. Hence, any left $\rat\fb$-module can be considered as a left $\catlie$-module by restriction along the augmentation. 
\end{rem} 

From the point of view of the theory of \cite{P_outer}, fundamental examples are given by $M = \rat \sym_n$, for $n \in \nat$, where $M$ is considered as a left $\catlie$-module supported on $\mathbf{n}$. The left derived functors of $M \mapsto H_0(\dgcat \otimes_\catlie M)$ in this case are of major interest: they reflect the different cohomological behaviour between functors on free groups and {\em outer} functors on free groups.

As a first approximation, one considers the homology:
\[
H_* (\ce_\lie \otimes_{\catlie} \rat \sym_n ).
\]
By construction, this is $\nat$-graded and takes values in left $\catlie$, right $\rat \sym_n$ bimodules, where the right $\rat\sym_n$ structure is induced by the right regular action on $\rat \sym_n$.

\begin{prop}
\label{prop:H_ce_QSn}
For $n \in \nat$, the left $\catlie$-module structure on $H_* (\ce_\lie \otimes_{\catlie} \rat \sym_n) $ is induced by the underlying left $\rat\fb$-module structure.

The underlying left $\rat\fb$, right $\rat \sym_n$ bimodule structure is given by
\[
H_t (\ce_\lie \otimes_{\catlie} \rat \sym_n )(\mathbf{m}) = 
\left\{
\begin{array}{ll}
\rat \sym_m \odot \mathrm{sgn_t} & m+t = n \\
0 & \mbox{otherwise,}
\end{array}
\right.
\]
where the convolution product $\odot$  is formed with respect to the respective right actions of $\sym_m$ and $\sym_t$, giving a right $\rat \sym_n$-module, and $\rat \sym_m$ acts on the left via the left regular action on $\rat \sym_m$. 
\end{prop}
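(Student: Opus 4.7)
My strategy is to reduce the computation via base change along the augmentation $\epsilon : \catlie \twoheadrightarrow \rat\fb$. The key observation is that $\rat \sym_n$, viewed as a left $\catlie$-module supported on $\mathbf{n}$, coincides with $\epsilon^* (\rat \fb (n, -))$, the restriction along $\epsilon$ of the Yoneda representable left $\rat\fb$-module at $\mathbf{n}$. Standard base change, combined with Yoneda on the right $\rat\fb$-side, then supplies a natural isomorphism of complexes of left $\catlie$-modules
\[
\ce_\lie \otimes_{\catlie} \rat \sym_n
\;\cong\;
\big(\ce_\lie \otimes_{\catlie} \rat \fb\big) (\mathbf{n}),
\]
where the right-hand side is the evaluation at $\mathbf{n}$ of the right $\rat\fb$-module structure on the left-$\catlie$, right-$\rat\fb$-bimodule $\ce_\lie \otimes_{\catlie} \rat\fb$. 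The problem is thus reduced to computing this ``right abelianization'' of $\ce_\lie$.

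The next step is to use that, at degree $t$, $(\ce_\lie)_t$ is the $\catlie$-bimodule $\catlie \odot \Lambda^t \lie$ (via Proposition \ref{prop:convolution_bimod_rmod}), together with the fact that the functor $- \otimes_{\catlie} \rat\fb$ on right $\catlie$-modules is symmetric monoidal with respect to $\odot$. Applied factor by factor, one then only needs that $\catlie \otimes_{\catlie} \rat\fb \cong \rat\fb$ (tautological, via $\epsilon$) and that $\Lambda^t \lie \otimes_{\catlie} \rat\fb \cong \mathrm{sgn}_t$ supported at $\mathbf{t}$. The latter is transparent through the Schur correspondence: the abelianization of the free Lie algebra $\liealg(V)$ is $V$, hence that of $\Lambda^t \liealg(V)$ is $\Lambda^t V$, which corresponds as a right $\rat\fb$-module to $\mathrm{sgn}_t$ at $\mathbf{t}$. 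Assembling gives that $(\ce_\lie)_t \otimes_{\catlie} \rat\fb$ at left arity $m$ is the right $\rat\fb$-module $\rat \sym_m \odot \mathrm{sgn}_t$, supported at arity $\mathbf{m+t}$, with exactly the bimodule structure specified in the statement.

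Evaluation at $\mathbf{n}$ then yields zero unless $m + t = n$, in which case one recovers $\rat \sym_m \odot \mathrm{sgn}_t$. Moreover, at each left arity $m$ the chain complex $(\ce_\lie \otimes_{\catlie} \rat\sym_n)(\mathbf{m})$ has a nonzero term only at the single degree $t = n - m$ (because the abelianized term at degree $t$ is supported at the single right-$\rat\fb$-arity $\mathbf{m+t}$); hence its differential is automatically zero and its homology agrees with the complex itself. For the first assertion, the left $\catlie$-module structure on $\ce_\lie \otimes_{\catlie} \rat\fb$ factors through $\epsilon$: on the Schur side it is induced by the Lie-algebra action on $V^{\otimes m}$, which degenerates to place-permutation in the abelianized setting. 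This factorization passes to homology, giving the left $\rat\fb$-module statement.

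The main obstacle should be establishing the monoidality of $- \otimes_{\catlie} \rat\fb$ with respect to $\odot$ and explicitly identifying the right abelianization of $\Lambda^t \lie$. Both points are clean via the Schur correspondence, but require a little bookkeeping to phrase in the abstract right-$\catlie$-module language used earlier in the paper, especially as the convolution product $\odot$ on right $\catlie$-modules uses the Lie operadic composition on one side while the Schur side is controlled by ordinary tensor product of functors.
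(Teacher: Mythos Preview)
Your argument is correct and reaches the same conclusion, but it is organized differently from the paper's proof. The paper proceeds more directly: it uses that $(\ce_\lie)_t$, evaluated at left arity $m$, is (as a right $\catlie$-module) a direct summand of the representable $\catlie(-, m+t)$, and then applies Yoneda to compute $\catlie(-, m+t) \otimes_{\catlie} \rat\sym_n$, which vanishes unless $m+t = n$ and yields $\rat\sym_n$ in that case; the relevant summand is then read off from the antisymmetrizer idempotent, and the vanishing of the differential is immediate since at each $m$ only a single homological degree survives. Your route instead factors through the ``right abelianization'' $\ce_\lie \otimes_{\catlie} \rat\fb$ and invokes strong monoidality of $-\otimes_{\catlie}\rat\fb$ with respect to $\odot$ to compute factor by factor. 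That monoidality does hold---the defining coequalizer $M\circ\lie \rightrightarrows M$ is reflexive via the unit $I\hookrightarrow\lie$, and reflexive coequalizers interact well with $\odot$---but this is precisely the ``bookkeeping'' you flag and is the one step that would need to be spelled out. The paper's argument sidesteps this issue entirely by staying with representables and Yoneda, at the cost of being slightly less conceptual; your approach packages the computation more structurally and makes the factorization of the left $\catlie$-action through $\epsilon$ fall out automatically from $\catlie\otimes_{\catlie}\rat\fb\cong\rat\fb$.
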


\begin{proof}
This follows directly from the identification of the complex $\ce_\lie \otimes_{\catlie} \rat \sym_n$. As in the proof of Proposition \ref{prop:CE_universal}, evaluated on $\mathbf{m}$ and in homological degree $t$, this is a direct summand of 
\[
\catlie (-, m+t) \otimes_{\catlie} \rat \sym_n.
\] 
By the Yoneda lemma, the latter is non-zero if and only if $m+t=n$, in which case one recovers $\rat \sym_n$. 

It follows easily that the differential on $\ce_\lie \otimes_{\catlie} \rat \sym_n$ is zero. Hence it remains to identify the appropriate direct summand. In terms of the Chevalley-Eilenberg complex for a Lie algebra $\g$, this corresponds to $\g^{\otimes m} \otimes \Lambda^t (\g) \hookrightarrow \g^{\otimes m} \otimes \g^{\otimes t}$, induced by $\Lambda^t (\g) \subset \g^{\otimes t}$. Unravelling the identifications, one obtains the stated result. 
\end{proof}

\section{The homology of $\dgcat$ and applications}
\label{sect:consequences}

In this section, we consider some consequences of the DG category structure on $\dgcat$ and its relationship with $\catlie$ via 
the functor $
\catlie \rightarrow \dgcat
.
$

\subsection{Passing to homology}
Having a DG category in hand, one naturally considers its homology. Generalizing the fact that the homology of a DG algebra is a graded associative algebra, the homology has the structure of a category enriched in $\nat$-graded $\rat$-vector spaces.

\begin{nota}
\label{nota:homology}
Write $\homol$ for the homology of $\dgcat = (\catlie \oplus \delta^{(1)}\catlie [1], \mut^{(1)})$ in $\fb$-bimodules and 
$\homol_0$ (respectively $\homol_1$) for the degree zero (resp. one) components.
\end{nota}

Theorem \ref{thm:DG_category} implies immediately:

\begin{prop}
\label{prop:homol}
The following properties hold:
\begin{enumerate}
\item 
$\homol$ is a small category enriched in $\nat$-graded $\rat$-vector spaces concentrated in homological degrees zero and one, with set of objects $\nat$; 
\item 
$\homol_0$ is a small $\rat$-linear category with set of objects $\nat$;  
\item 
there is a full, $\rat$-linear functor 
$ 
\catlie \rightarrow \homol_0
$ 
that is the identity on objects; in particular,  $\homol_0$ has an underlying $\catlie$-bimodule structure; 
\item  
$\homol_1$ is an $\homol_0$-bimodule and hence has  an underlying $\catlie$-bimodule structure.
\end{enumerate}
\end{prop}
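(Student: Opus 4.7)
The plan is to deduce everything formally from Theorem \ref{thm:DG_category}, by using the standard fact that taking homology of a DG category yields a category enriched in $\nat$-graded $\rat$-vector spaces (this is the categorical analogue of the fact that the homology of a DG algebra is a graded associative algebra).

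First I would recall the construction of $\homol$: for $m,n \in \nat$,
\[
\homol(m,n) := H_* \bigl( \dgcat(m,n), \mut^{(1)} \bigr),
\]
taken in $\rat$-vector spaces. Composition is induced by the composition of $\dgcat$: given the chain map $\dgcat(n,t) \otimes_{\sym_n} \dgcat(m,n) \to \dgcat(m,t)$ (which is a chain map by Theorem \ref{thm:DG_category}), passage to homology together with the K\"unneth map over $\sym_n$ (which is an isomorphism in characteristic zero) gives the composition in $\homol$. Unitality and associativity are inherited from $\dgcat$. Since $\dgcat$ has morphisms concentrated in homological degrees $0$ and $1$, the same holds for $\homol$; the set of objects is $\nat$. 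This proves (1), and (2) is the restriction to the degree-zero part, which is automatically enriched in $\rat$-vector spaces rather than merely graded ones.

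For (3), I would invoke the inclusion of DG categories $\catlie \hookrightarrow \dgcat$ of Notation \ref{nota:dgcat}. Taking $H_0$ gives a $\rat$-linear functor $\catlie \to \homol_0$ which is the identity on objects. Since $\dgcat_0 = \catlie$ and the cycles in degree zero are all of $\catlie$, the map $\catlie(m,n) \twoheadrightarrow \homol_0(m,n) = \catlie(m,n)/\mathrm{im}(\mut^{(1)})$ is surjective, proving fullness. This surjection at once equips $\homol_0$ with an underlying $\catlie$-bimodule structure via restriction along $\catlie \to \homol_0$.

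For (4), the general formalism gives that $\homol_1$ is an $\homol_0$-bimodule (composition in $\dgcat$ restricts to maps $\dgcat_0 \otimes_\fb \dgcat_1 \to \dgcat_1$ and $\dgcat_1 \otimes_\fb \dgcat_0 \to \dgcat_1$, both of which are chain maps with respect to the differential induced by $\mut^{(1)}$, and these descend to the required $\homol_0$-actions on $\homol_1$). Composing with the functor of (3) yields the underlying $\catlie$-bimodule structure.

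The routine part is unwinding that the enrichment-level K\"unneth and the compatibility of composition with the differential (from Theorem \ref{thm:DG_category}) really do give a well-defined composition on homology; no genuine obstacle is expected, and nothing here requires more than formal manipulation of the DG category structure already established.
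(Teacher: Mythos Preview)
Your proposal is correct and follows exactly the approach the paper takes: the paper simply states that the proposition follows immediately from Theorem \ref{thm:DG_category}, and you have filled in the formal details of how passing to homology of a DG category yields a graded-enriched category, with the functor of (3) coming from the inclusion $\catlie \hookrightarrow \dgcat$ of Notation \ref{nota:dgcat}. Your remarks on fullness and on the K\"unneth/semisimplicity argument in characteristic zero are appropriate elaborations of what the paper leaves implicit.
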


Using the restricted right $\rat \fb$-module structures on $\homol_0$, $\homol_1$, one can form the associated Schur functors 
\begin{eqnarray*}
\homol_0 \otimes_\fb \underline{V} & \mbox{\ and \ } & 
\homol_1 \otimes_\fb \underline{V};
\end{eqnarray*}
these take values in left $\homol_0$-modules (and hence in left $\catlie$-modules).

The underlying left $\rat\fb$-modules identify as follows:

\begin{prop}
\label{prop:relate_Lie_algebra_homology}
For $n \in  \nat$, there are natural isomorphisms for $\epsilon \in \{0, 1\}$:
\begin{eqnarray*}
\big( \homol_\epsilon \otimes_\fb \underline{V}\big)(n)  & \cong & H_\epsilon (\liealg (V); \liealg(V)^{\otimes n}),
\end{eqnarray*}
where the right hand side denotes Lie algebra homology of the free Lie algebra $\liealg(V)$ with coefficients in the $n$-fold tensor product of the adjoint representation.
\end{prop}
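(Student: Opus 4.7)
The strategy is to chain together three identifications: one using exactness to commute the Schur functor with homology; one using the quasi-isomorphism $\ce_\lie \twoheadrightarrow \dgcat$ from Theorem \ref{thm:ce_dgcat}; and one identifying $\ce_\lie \otimes_\fb \underline{V}$ as the classical Chevalley-Eilenberg complex of the free Lie algebra $\liealg(V)$.

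First I would note that, since we are working over $\rat$, the functor $-\otimes_\fb \underline{V}$ from right $\rat\fb$-modules to $\rat$-vector spaces is exact (coinvariants under the $\sym_m$-actions are exact over $\rat$). Applied termwise, it commutes with taking homology. Hence
\[
\big(\homol_\epsilon \otimes_\fb \underline{V}\big)(n) \;\cong\; H_\epsilon\!\big(\dgcat(-,n)\otimes_\fb \underline{V}\big)
\]
for $\epsilon\in\{0,1\}$ and $n \in \nat$.

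Next, I would apply the $-\otimes_\fb \underline{V}$ functor to the surjection $\ce_\lie \twoheadrightarrow \dgcat$ of Theorem \ref{thm:ce_dgcat}. Viewed through the restriction to the right $\rat\fb$-module structure on each side (coming from $\rat\fb\hookrightarrow \catlie$), that map is still a quasi-isomorphism of complexes of right $\rat\fb$-modules, so exactness of $-\otimes_\fb \underline{V}$ promotes it to a quasi-isomorphism of complexes of left $\catlie$-modules
\[
\ce_\lie \otimes_\fb \underline{V} \;\stackrel{\sim}{\twoheadrightarrow}\; \dgcat \otimes_\fb \underline{V}.
\]
It therefore suffices to identify the left hand complex with the classical Chevalley-Eilenberg complex computing $H_*(\liealg(V);\underline{\liealg(V)})$.

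For the last step I would invoke Proposition \ref{prop:CE_universal}(3), specialised to $\g=\liealg(V)$. Combined with the identification $\underline{\liealg(V)}\cong \catlie \otimes_\fb \underline{V}$ as left $\catlie$-modules (Example \ref{exam:gbar}) and the associativity isomorphism $\ce_\lie \otimes_\catlie (\catlie \otimes_\fb \underline{V}) \cong \ce_\lie \otimes_\fb \underline{V}$, this yields a natural isomorphism of complexes of left $\catlie$-modules
\[
\ce(\liealg(V);\underline{\liealg(V)}) \;\cong\; \ce_\lie \otimes_\fb \underline{V}.
\]
Taking $H_\epsilon$ in arity $n$ recovers $H_\epsilon(\liealg(V);\liealg(V)^{\otimes n})$, and combining with the previous quasi-isomorphism completes the proof.

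The main obstacle is bookkeeping: one must be careful that the right $\catlie$-module structure used in $\ce_\lie\otimes_\catlie \underline{\liealg(V)}$ is compatible with the right $\rat\fb$-module structure used in $\ce_\lie\otimes_\fb\underline{V}$, so that the associativity isomorphism is legitimate. This is a formal consequence of the fact that $\underline{\liealg(V)}$ arises from $\catlie\otimes_\fb \underline{V}$ with the right $\catlie$-action inherited from $\catlie$ itself. As a sanity check, one may directly unfold $\dgcat(-,n)\otimes_\fb \underline{V}$ as the two-term complex $\liealg(V)^{\otimes n}\otimes V \to \liealg(V)^{\otimes n}$ (using $\delta^{(1)}\catlie\cong \catlie\odot I$ and Example \ref{exam:mut_bimodule}), recognising the differential as the restricted adjoint action and hence the standard two-term complex (\ref{eqn:cx_V}) computing Lie algebra homology of a free Lie algebra.
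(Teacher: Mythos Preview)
Your proposal is correct, but it takes a longer route than the paper. The paper's proof is essentially just your final ``sanity check'': it directly unfolds $\dgcat(-,n)\otimes_\fb \underline{V}$ as the two-term complex $\liealg(V)^{\otimes n}\otimes V \to \liealg(V)^{\otimes n}$ (using exactness of $-\otimes_\fb\underline{V}$ and the identifications $\delta^{(1)}\catlie\cong\catlie\odot I$, $\catlie\otimes_\fb\underline{V}\cong\underline{\liealg(V)}$), and then invokes the standard fact that this is the complex~(\ref{eqn:cx_V}) computing $H_*(\liealg(V);\liealg(V)^{\otimes n})$. Your detour through Theorem~\ref{thm:ce_dgcat} and Proposition~\ref{prop:CE_universal}(3) is valid and has the virtue of not relying on the special homological feature of free Lie algebras (that the Chevalley-Eilenberg complex reduces to two terms), but here that feature is already built into the paper's setup, so the direct identification is both shorter and more transparent.
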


\begin{proof}
By definition, the complex underlying the DG category $\dgcat$  is 
 $
\catlie \odot I \rightarrow \catlie
$ 
with differential induced by the tensor products of the adjoint representation. On applying the exact functor $- \otimes_\fb \underline{V}$, one obtains 
\[
\underline{\liealg (V)} \otimes V \rightarrow \underline{\liealg (V)}
\]
which, evaluated in arity $n$ gives
$ 
\liealg (V)^{\otimes n} \otimes V \rightarrow \liealg (V)^{\otimes n}
$, 
with differential the restriction of the $n$-fold tensor product of the adjoint action of $\liealg (V)$ from the right. 
Since the homology of this complex is the stated Lie algebra homology,  the result follows.
\end{proof}

\begin{cor}
\label{cor:lie_alg_homol_H0_modules}
The functors $H_\epsilon (\liealg (V); \liealg(V)^{\otimes *})$, for $\epsilon \in \{0 ,1\}$, take values in left $\homol_0$-modules.
\end{cor}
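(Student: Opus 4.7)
The plan is to deduce this directly from Propositions~\ref{prop:homol} and \ref{prop:relate_Lie_algebra_homology}; no substantial new argument is needed, only a transport of structure. Proposition~\ref{prop:homol}(4) endows $\homol_\epsilon$ with the structure of a left $\homol_0$-module for $\epsilon \in \{0,1\}$ (for $\epsilon = 0$ this is tautological, via the left regular action on the $\rat$-linear category $\homol_0$).

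Next, as recorded in the paragraph preceding Proposition~\ref{prop:relate_Lie_algebra_homology}, the Schur functor $\homol_\epsilon \otimes_\fb \underline{V}$, formed using the restricted right $\rat\fb$-module structure, takes values in left $\homol_0$-modules: the $\homol_0$-action is induced on the left-hand tensor factor by composition in $\homol_0$, exactly as for $\catlie$ in Example~\ref{exam:gbar}. By Proposition~\ref{prop:relate_Lie_algebra_homology}, this Schur functor, evaluated at $n\in \nat$, is naturally identified with $H_\epsilon(\liealg(V); \liealg(V)^{\otimes n})$. Since this identification arises from applying the exact functor $-\otimes_\fb \underline{V}$ to the complex of $\catlie$-bimodules $\catlie \odot I \to \catlie$ underlying $\dgcat$, it is an isomorphism of left $\catlie$-modules (in the variable $n$); transporting the $\homol_0$-action along it yields the claim.

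There is no real obstacle: the corollary is essentially a bookkeeping statement that assembles the results of the two cited propositions. The one conceptual point worth emphasising is that this upgrades the natural left $\catlie$-module structure on the family $H_\epsilon(\liealg(V); \liealg(V)^{\otimes *})$ to an action of the (a priori smaller) $\rat$-linear category $\homol_0$ via the full functor $\catlie \to \homol_0$ of Proposition~\ref{prop:homol}(3); this refinement reflects the fact that relations imposed by the differential $\mut^{(1)}$ are automatically satisfied after passage to homology.
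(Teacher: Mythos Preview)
Your proposal is correct and follows exactly the route the paper intends: the corollary is stated without proof in the paper, as an immediate consequence of the paragraph preceding Proposition~\ref{prop:relate_Lie_algebra_homology} (which already records that $\homol_\epsilon \otimes_\fb \underline{V}$ takes values in left $\homol_0$-modules) together with the identification of Proposition~\ref{prop:relate_Lie_algebra_homology} itself. Your write-up makes this explicit and is entirely in line with the paper's reasoning.
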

\subsection{$\homol_0$-modules}

Recall from Remark \ref{rem:flie} that, by definition, $\flie^\mu$ is the full subcategory of $\lmod$ of objects for which the natural transformation  $\mut$ is zero (see \cite{P_outer} for more details). The category $\flie^\mu$ models outer functors on free groups.

\begin{thm}
\label{thm:fliemu_homol0}
The category $\flie^\mu$ is equivalent to the category of left $\homol_0$-modules and, with respect to this equivalence,  the canonical inclusion $\flie^\mu \hookrightarrow \flie$ is induced by restriction along the full, $\rat$-linear functor $\catlie \rightarrow \homol_0$.
\end{thm}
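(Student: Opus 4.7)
The plan is to make the quotient description of $\homol_0$ explicit and then translate the condition for a left $\catlie$-module to descend to a left $\homol_0$-module into the defining condition of $\flie^\mu$.

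First, I would observe that, by Notation \ref{nota:homology}, $\homol$ is the homology of the two-term complex $\delta^{(1)}\catlie[1] \xrightarrow{\mut^{(1)}} \catlie$ in $\catlie$-bimodules. This yields a canonical isomorphism of $\catlie$-bimodules
\[
\homol_0 \;\cong\; \catlie / \mathrm{im}(\mut^{(1)})
\]
under which the full $\rat$-linear functor $\catlie \to \homol_0$ of Proposition \ref{prop:homol} is the canonical projection. Restriction along this projection therefore identifies the category of left $\homol_0$-modules with the full subcategory of $\lmod$ consisting of those $M$ on which every element of the sub-$\catlie$-bimodule $\mathrm{im}(\mut^{(1)}) \subseteq \catlie$ acts as zero, together with the compatibility with $\flie^\mu\hookrightarrow\flie$ claimed in the statement. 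The theorem thus reduces to showing that this subcategory coincides with $\flie^\mu$.

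The key step is to identify $\mathrm{im}(\mut^{(1)})$ as the sub-$\catlie$-bimodule of $\catlie$ generated by the family $\{\mu(n) \mid n \in \nat\}$. Indeed, Lemma \ref{lem:iota_generate} provides generation of $\delta^{(1)}\catlie$ by $\{\iota_a\}_{a \in \nat}$ as a left $\catlie$, right $\rat\fb$-bimodule (hence a fortiori as a $\catlie$-bimodule), and $\mut^{(1)}(\iota_a) = \mu(a-1)$ by construction of $\mut^{(1)}$; since $\mut^{(1)}$ is a morphism of $\catlie$-bimodules by Proposition \ref{prop:mut1_bimodule}, the image is the $\catlie$-bimodule generated by these values.

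The conclusion is then formal: for any $M \in \lmod$, the set of $\phi \in \catlie$ acting as zero on $M$ is closed under pre- and post-composition (since $(\alpha \phi \beta)\cdot x = \alpha \cdot \phi \cdot (\beta \cdot x) = 0$), so it is itself a sub-$\catlie$-bimodule of $\catlie$. Consequently, $\mathrm{im}(\mut^{(1)})$ acts as zero on $M$ if and only if each generator $\mu(n)$ does, which by Corollary \ref{cor:mut} is precisely the condition $\mut = 0$ defining $\flie^\mu$. I do not anticipate any significant obstacle here: the essential work has already been carried out in Propositions \ref{prop:delta1_catlie_bimodule} and \ref{prop:mut1_bimodule} and in Lemma \ref{lem:iota_generate}. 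The only point one should be careful to state correctly is that the image of a bimodule morphism is a sub-bimodule, so that sub-$\catlie$-bimodule generation by $\{\mu(n)\}$ genuinely follows from left $\catlie$, right $\rat\fb$-bimodule generation of $\delta^{(1)}\catlie$ by $\{\iota_a\}$.
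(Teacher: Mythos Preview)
Your proposal is correct and gives a clean, self-contained argument. It is, however, a genuinely different route from the paper's. You work directly with the quotient description $\homol_0 \cong \catlie/\mathrm{im}(\mut^{(1)})$, identify the ideal as the $\catlie$-bimodule generated by the family $\{\mu(n)\}$ via Lemma \ref{lem:iota_generate} and Proposition \ref{prop:mut1_bimodule}, and then invoke the universal property of a quotient category to match the vanishing condition with the definition of $\flie^\mu$. The paper instead imports structural results from \cite{P_outer}: it uses that $\flie^\mu$ is abelian with enough projectives, identifies $\homol_0(n,-)$ as the projective in $\flie^\mu$ corepresenting evaluation at $n$, computes $\hom_{\lmod}(\homol_0(b,-),\homol_0(a,-)) \cong \homol_0(a,b)$, and concludes by a Morita-style argument. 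Your approach is more elementary and does not depend on the projective-generator machinery of \cite{P_outer}; the paper's approach, while less direct, makes explicit the representing projectives in $\flie^\mu$, which is additional information of independent interest.
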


\begin{proof}
By the results of \cite{P_outer}, $\flie^\mu$ is abelian with enough projectives. Moreover, for $n \in \nat$,   $\homol_0(n,- )$ (considered as a left $\catlie$-module) identifies as the projective in $\flie^\mu$ that corepresents evaluation on $n$.
 In particular, this gives that $\hom_{\lmod} ( \homol_0(b,- ), \homol_0 (a,-)) \cong \homol_0 (a,b)$ for $a, b \in \nat$ and these isomorphisms are compatible with composition. The result then follows by standard arguments.
\end{proof}

This has the immediate consequence:

\begin{cor}
\label{cor:H1_outer}
Considered as a left $\catlie$-module, $\homol_1$ belongs to $\flie^\mu$.
\end{cor}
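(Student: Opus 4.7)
The plan is a short deduction from Theorem~\ref{thm:fliemu_homol0} together with Proposition~\ref{prop:homol}. The key point is that $\homol_1$ carries not merely a left $\catlie$-module structure, but actually a left $\homol_0$-module structure, and the $\catlie$-action factors through the canonical functor $\catlie \to \homol_0$.

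More precisely, the first step is to invoke Proposition~\ref{prop:homol}(4), which states that $\homol_1$ is naturally an $\homol_0$-bimodule; in particular, it carries a left $\homol_0$-module structure. (This is a standard consequence of $\homol$ being a category enriched in $\nat$-graded $\rat$-vector spaces, together with the DG category structure of Theorem~\ref{thm:DG_category}; the degree-one component is a bimodule over the degree-zero component of the homology.) The second step is to observe that the underlying left $\catlie$-module structure on $\homol_1$ referred to in the statement is obtained precisely by restriction along the full $\rat$-linear functor $\catlie \to \homol_0$ of Proposition~\ref{prop:homol}(3); this is immediate from the constructions, since the $\catlie$-action on $\dgcat$ comes from the inclusion $\catlie \hookrightarrow \dgcat$ of DG categories (Notation~\ref{nota:dgcat}), and hence factors through $\homol_0$ after passing to homology.

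Finally, applying Theorem~\ref{thm:fliemu_homol0}, which identifies the category of left $\homol_0$-modules with $\flie^\mu$ via restriction along $\catlie \to \homol_0$, one concludes that the left $\catlie$-module $\homol_1$ lies in the essential image of the inclusion $\flie^\mu \hookrightarrow \flie$. In other words, $\mut$ acts as zero on $\homol_1$, i.e., $\homol_1 \in \flie^\mu$. There is no real obstacle here: the entire content has been loaded into Theorem~\ref{thm:DG_category} and Theorem~\ref{thm:fliemu_homol0}, and the corollary is simply the observation that the bimodule structure on $\homol_1$ over $\homol_0$ is already enough to force the vanishing of $\mut$.
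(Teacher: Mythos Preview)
Your argument is correct and follows exactly the approach of the paper: the corollary is stated there as an immediate consequence of Theorem~\ref{thm:fliemu_homol0}, using that $\homol_1$ is a left $\homol_0$-module (Proposition~\ref{prop:homol}(4)) whose $\catlie$-action is obtained by restriction along $\catlie \to \homol_0$. You have simply spelled out the details that the paper leaves implicit.
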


\begin{rem}
The results of  \cite{P_outer} explain how $\flie^\mu$ is related to the category $\f^{\mathrm{Out}} (\gr\op)$ of outer functors on free groups (as introduced in \cite{PV}). Via this relationship, the objects $\homol_0$ and $\homol_1$ are directly related to the higher Hochschild homology functors considered in \cite{PV}, inspired by the work of Turchin and Willwacher \cite{MR3982870}. 

It was Turchin and Willwacher who observed that inner automorphisms of free groups act trivially upon the relevant higher Hochschild homology groups.  As observed in \cite{PV}, they are {\em outer functors} on free groups. This is {\em equivalent} to the assertion that both $\homol_0$ and $\homol_1$ lie in $\flie^\mu$. 

Whilst the fact that $\homol_0$ lies in $\flie^\mu$ is clear (as exploited in the proof of Theorem \ref{thm:fliemu_homol0} above), hitherto there was no direct argument establishing that $\homol_1$ also lies in $\flie^\mu$. Corollary \ref{cor:H1_outer} provides such an argument.  
\end{rem}

\subsection{A complex for left $\catlie$-modules}

In \cite{P_outer}, the left adjoint to $\flie^\mu \hookrightarrow \flie$ was identified as the functor $F \mapsto F^\mu$, where 
$F^\mu$ is defined as the cokernel of the natural transformation 
\[
\mut : \delta F \rightarrow F.
\]

\begin{nota}
Write $\lHmod$ for the category of left $\homol_0$-modules, so that there is an inclusion 
$ 
\lHmod \hookrightarrow \lmod
$
given by restriction along $\catlie \rightarrow \homol_0$, 
\end{nota}

Using the equivalence of Theorem \ref{thm:fliemu_homol0}, the left adjoint to $\flie^\mu \hookrightarrow \flie$ is identified by the following:

\begin{prop}
\label{prop:induction}
The left adjoint to the functor $\lHmod \hookrightarrow \lmod$ is 
\[
\homol_0 \otimes_\catlie - : \lmod \rightarrow \lHmod.
\]
\end{prop}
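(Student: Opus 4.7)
The statement is an instance of the standard extension-of-scalars adjunction for the $\rat$-linear functor $\catlie \rightarrow \homol_0$ of Proposition \ref{prop:homol}. I would verify it in three short steps rather than appealing to a black-box result, since the right-hand side of the tensor product has to be interpreted carefully.

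First, I would check that $\homol_0 \otimes_\catlie -$ indeed takes values in $\lHmod$. By Proposition \ref{prop:homol}, $\homol_0$ is an $\homol_0$-bimodule; by restriction along $\catlie \rightarrow \homol_0$ (which is identity on objects, so does not disturb the indexing of the coends), one obtains a left $\homol_0$, right $\catlie$ bimodule structure on $\homol_0$. The tensor product $\homol_0 \otimes_\catlie M$ is formed using this right $\catlie$-action, and it carries a residual left $\homol_0$-action coming from the left action on the $\homol_0$-factor, since this commutes with the right $\catlie$-action by associativity. This makes $\homol_0 \otimes_\catlie M$ into a left $\homol_0$-module.

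Second, I would construct the unit and counit. For $M \in \lmod$, the unit $\eta_M : M \rightarrow \homol_0 \otimes_\catlie M$ is the obvious morphism sending $m \in M(n)$ to $\id_n \otimes m$, using the image in $\homol_0(n,n)$ of the identity of $\catlie(n,n)$. For $N \in \lHmod$, the counit $\varepsilon_N : \homol_0 \otimes_\catlie N \rightarrow N$ is the action map $\phi \otimes x \mapsto \phi \cdot x$; this is well-defined on the coend precisely because the left $\catlie$-action on $N$ factors through $\homol_0$ (as $N$ is in $\lHmod$), so the relation $(\phi \circ \psi) \otimes x \sim \phi \otimes (\psi \cdot x)$ imposed by $\otimes_\catlie$ is respected.

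Third, I would verify the triangle identities, which reduce to unitality and associativity of the $\homol_0$-action and are immediate. The main potential obstacle is the coherence of passing between the left $\catlie$- and left $\homol_0$-module structures in the construction of the counit, but once one observes that $\lHmod \hookrightarrow \lmod$ is precisely the full subcategory of $\catlie$-modules whose action factors through $\homol_0$, this is automatic. The conclusion then follows by standard general nonsense: the pair $(\eta,\varepsilon)$ exhibits $\homol_0 \otimes_\catlie - \dashv (\lHmod \hookrightarrow \lmod)$.
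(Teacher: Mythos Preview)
Your proposal is correct and takes essentially the same approach as the paper: the paper's proof is the single sentence ``This is simply the fact that induction is left adjoint to restriction,'' and you have unpacked precisely this extension-of-scalars adjunction by exhibiting the unit and counit explicitly. Your more detailed verification adds nothing wrong and nothing essentially new.
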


\begin{proof}
This is simply the fact that induction is left adjoint to restriction. 
\end{proof}

\begin{nota}
Write $\dgmod$ for the category of left $\dgcat$-modules in chain complexes. Objects are (homologically graded) chain complexes of left $\rat\fb$-modules $C_\bullet$, equipped with a structure morphism 
\[
\dgcat \otimes_\fb C_\bullet \rightarrow C_\bullet
\]
that satisfies the unit and associativity axioms. Morphisms are morphisms of chain complexes of left $\rat\fb$-modules (of degree zero) that are compatible with  the respective structure maps. 
\end{nota}

Clearly there is an induction functor lifting that of Proposition \ref{prop:induction} to the complex level: 
\[
\dgcat \otimes_{\catlie} - : \lmod \rightarrow \dgmod.
\]
Explicitly, for a left $\catlie$-module, $\dgcat \otimes_\catlie M$ is the complex concentrated in homological degrees $1$ and $0$:
\[
\big (\delta^{(1)} \catlie \big) \otimes _\catlie M 
\rightarrow 
M 
\]
with differential induced by $\mut$. This should be viewed as a refinement of the `complex' $\mut : \delta F \rightarrow F$ used in defining $F \mapsto F^\mu$ (using the notation of \cite{P_outer}). 

\begin{rem}
One can go further and consider applying $\dgcat \otimes _\catlie  -$ to complexes of left $\catlie$-modules.
However, since the degree one term of $\dgcat$ is not projective as a right $\catlie$-module, this will not behave well homologically. Instead, by Theorem \ref{thm:ce_dgcat}, one should use the `resolution' $\ce_\lie \twoheadrightarrow \dgcat$ (considered only as $\catlie$-bimodules and with projectivity only concerning the right $\catlie$-module structure).
\end{rem}

\begin{prop}
\label{prop:properties_dgcat_otimes}
For $M$ a left $\catlie$-module, 
\begin{enumerate}
\item 
the homology $H_\epsilon (\dgcat \otimes_\catlie M)$ is naturally a left $\homol_0$-module for $\epsilon \in \{0, 1 \}$; 
\item 
there is a natural isomorphism $H_0 (\dgcat \otimes_\catlie M) \cong \homol_0 \otimes_\catlie M$; 
\item 
the $\dgcat$-module structure of $\dgcat \otimes_\catlie M$ induces a morphism of left $\homol_0$-modules:
\[
\homol_1 \otimes_{\homol_0} H_0 (\dgcat \otimes_\catlie M) 
\cong 
\homol_1 \otimes_\catlie M 
\rightarrow 
H_1 (\dgcat \otimes_\catlie M).
\]
\end{enumerate}
\end{prop}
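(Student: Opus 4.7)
The overall strategy is to observe that $\dgcat \otimes_\catlie M$ is naturally a left $\dgcat$-module in chain complexes, so that its homology inherits the structure of a graded left $\homol$-module. Each of the three claims then follows by combining this formal fact with the explicit two-term description of $\dgcat$. First I would verify the left $\dgcat$-module structure: since $\dgcat$ is a complex of $\catlie$-bimodules (using Propositions \ref{prop:delta1_catlie_bimodule} and \ref{prop:mut1_bimodule}) and the composition in $\dgcat$ is compatible with the differential by Theorem \ref{thm:DG_category}, the left multiplication of $\dgcat$ on the tensor factor $\dgcat$ descends to an associative, unital action $\dgcat \otimes_\fb (\dgcat \otimes_\catlie M) \to \dgcat \otimes_\catlie M$ of chain complexes of left $\rat\fb$-modules. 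Passing to homology yields a graded $\homol$-module structure on $H_*(\dgcat \otimes_\catlie M)$, which in particular restricts to an $\homol_0$-action on each component, giving (1).

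For (2), the underlying complex of $\dgcat \otimes_\catlie M$ is concentrated in degrees $0$ and $1$ and, using $\catlie \otimes_\catlie M \cong M$, has the form
\[
\delta^{(1)} \catlie \otimes_\catlie M \stackrel{\mut^{(1)} \otimes \id}{\longrightarrow} M,
\]
so $H_0(\dgcat \otimes_\catlie M)$ is the cokernel of $\mut^{(1)} \otimes \id$. By definition, $\homol_0$ is the cokernel of $\mut^{(1)} : \delta^{(1)} \catlie \to \catlie$ in $\catlie$-bimodules, so applying the right exact functor $- \otimes_\catlie M$ to the exact sequence $\delta^{(1)} \catlie \to \catlie \to \homol_0 \to 0$ identifies $\homol_0 \otimes_\catlie M$ with this same cokernel, naturally in $M$. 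This gives the isomorphism in (2), and a check that it is $\homol_0$-linear (which follows since both sides carry the action induced by the $\homol_0$-bimodule structure on $\homol_0$) completes that part.

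For (3), the degree one component of the $\homol$-action on $H_*(\dgcat \otimes_\catlie M)$ is a map
\[
\homol_1 \otimes_\fb H_0(\dgcat \otimes_\catlie M) \longrightarrow H_1(\dgcat \otimes_\catlie M),
\]
which factors through $\homol_1 \otimes_{\homol_0} H_0(\dgcat \otimes_\catlie M)$ thanks to associativity of the $\homol$-action together with the $\homol_0$-bimodule structure on $\homol_1$ and the $\homol_0$-module structure on $H_0$. The identification $\homol_1 \otimes_{\homol_0} H_0(\dgcat \otimes_\catlie M) \cong \homol_1 \otimes_\catlie M$ then follows by applying (2) and the standard change-of-base isomorphism $N \otimes_{\homol_0} (\homol_0 \otimes_\catlie M) \cong N \otimes_\catlie M$ with $N = \homol_1$ (valid because $\catlie \twoheadrightarrow \homol_0$ is surjective on morphisms, so the right-$\catlie$-action on $\homol_1$ factors through $\homol_0$). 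The main routine step — and the only place careful bookkeeping is needed — is checking that the module action really factors through $\otimes_{\homol_0}$ with the intended $\homol_0$-actions; no deeper input is required beyond Theorem \ref{thm:DG_category} and right exactness of $- \otimes_\catlie M$.
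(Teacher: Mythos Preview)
Your proposal is correct and follows exactly the approach the paper has in mind: the paper's proof is the single sentence ``These statements are proved by a straightforward generalization of the properties of the homology of a DG module over a DG algebra,'' and what you have written is precisely a careful unpacking of that sentence. Your use of the left $\dgcat$-module structure on $\dgcat\otimes_\catlie M$, right exactness of $-\otimes_\catlie M$ for (2), and the degree-one component of the $\homol$-action for (3) are the intended ingredients.
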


\begin{proof}
These statements are proved by a straightforward generalization of the properties of the homology of a DG module over a DG algebra. 
\end{proof}

\begin{exam}
\label{exam:lie_alg_hom_vs_dgcat_hom}
For a Lie algebra $\g$, one has the associated left $\catlie$-module $\underline{\g}$. Hence one can form the left $\dgcat$-module $\dgcat \otimes_\catlie \underline{\g}$. When $\g = \lie(V)$, this simply gives $\dgcat \otimes _\fb \underline{V}$, which is described explicitly above. In the general case,  the underlying complex has the form 
$
\delta^{(1)} \catlie \otimes_{\catlie} \underline{\g} \rightarrow \underline{\g}.
$ 

Corollary \ref{cor:nat_trans_ce_dgcat_homol} (with $M= \underline{\g}$) gives the natural transformation of (graded) left $\catlie$-modules:
\[
H_* (\g; \underline{\g}) \rightarrow H_* (\dgcat \otimes_\catlie \underline{\g}).
\]
This is an isomorphism in degree zero, where one has  $H_0 (\g; \underline{\g})$, which is an $\homol_0$-module. 

In degree one, there is a surjection of left $\catlie$-modules, 
\[
H_1 (\g; \underline{\g}) \twoheadrightarrow H_1 (\dgcat \otimes_\catlie \underline{\g})
\]
and the codomain is an $\homol_0$-module, by Proposition \ref{prop:properties_dgcat_otimes}. If $\g$ is the free Lie algebra $\lie (V)$, this map is an isomorphism; it is not clear how close this map is to being an isomorphism for general $\g$.

The above surjection factors to give a natural surjection of left $\homol_0$-modules:
\[
\homol_0 \otimes_{\catlie} H_1 (\g; \underline{\g}) 
\twoheadrightarrow 
H_1 (\dgcat \otimes_\catlie \underline{\g}).
\]
This exhibits $H_1 (\dgcat \otimes_\catlie \underline{\g})$ as an approximation to $\homol_0 \otimes_{\catlie} H_1 (\g; \underline{\g}) $.
\end{exam}

\section{$\homol_1$ as a syzygy of $\homol_0$}
\label{sect:syzygy}

This short section serves to explain the relationship between $\homol_1$ and $\homol_0$ that is obtained when one forgets structure and considers these as left $\catlie$, right $\rat\fb$-modules. This interpretation is implicit in \cite{PV}.

\begin{thm}
\label{thm:syzygy}
The underlying complex of $\dgcat$ provides a projective resolution of $\homol_0$ in left $\catlie$, right $\rat\fb$-modules.

Hence, the exact sequence 
\[
0
\rightarrow
\homol_1 
\rightarrow
\catlie \odot I
\rightarrow 
\catlie 
\rightarrow 
\homol_0
\rightarrow 
0
\]
restricted to left $\catlie$, right $\rat\fb$-modules exhibits $\homol_1$ as the second syzygy of $\homol_0$.
\end{thm}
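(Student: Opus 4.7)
The plan is to reduce the claim to verifying that $\catlie$ and $\delta^{(1)}\catlie \cong \catlie \odot I$ are projective as left $\catlie$, right $\rat\fb$-modules. Once this is granted, the exact sequence displayed in the statement is tautological, since by construction of the homology of $\dgcat$ one has $\homol_0 = \mathrm{coker}(\mut^{(1)})$ and $\homol_1 = \ker(\mut^{(1)})$, and the resulting four-term exact sequence with two projective middle terms then exhibits $\homol_1$ as the second syzygy of $\homol_0$.

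To set up the projectivity criterion, I would use that $\fb$ has no morphisms between objects of distinct cardinalities, so the category of right $\rat\fb$-modules decomposes as $\prod_{a \in \nat}(\text{right } \rat\sym_a\text{-mod})$. A left $\catlie$, right $\rat\fb$-module $B$ is therefore equivalent to the data of a family $\{B(-,a)\}_{a \in \nat}$ where each $B(-,a)$ is a left $\catlie$-module equipped with a commuting right $\sym_a$-action, and one has a canonical decomposition $B = \bigoplus_a B(-,a)$ in bimodules, each summand supported at source $a$. Semisimplicity of $\rat\sym_a$ in characteristic zero then yields the isotypic decomposition $B(-,a) = \bigoplus_\lambda V_\lambda \otimes B^{(\lambda)}(-,a)$, and each summand $V_\lambda \otimes B^{(\lambda)}(-,a)$ is a direct summand, in bimodules supported at $a$, of the free bimodule $B^{(\lambda)}(-,a) \otimes \rat\sym_a$. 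It follows that $B$ is projective as a bimodule whenever each $B(-,a)$ is projective as a left $\catlie$-module; this is the criterion to be applied.

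I would then verify this criterion for our two bimodules. For $\catlie$, the source-$a$ component is the representable $\catlie(a,-)$, projective in $\lmod$ by Yoneda. For $\catlie \odot I$, the definition of the convolution gives $(\catlie \odot I)(a,-) \cong \bigoplus_{i \in \mathbf{a}} \catlie(\mathbf{a} \setminus \{i\}, -)$, a finite direct sum of representables, hence again projective in $\lmod$. Both terms of the two-step complex $\catlie \odot I \to \catlie$ are therefore projective as left $\catlie$, right $\rat\fb$-modules, which concludes the argument. The main obstacle, if any, is articulating the projectivity criterion cleanly; after that, identifying the required structure on each bimodule is a routine unwinding of the definitions together with Lemma \ref{lem:delta_catlie}.
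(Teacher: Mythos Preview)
Your proposal is correct and follows essentially the same line as the paper's own proof: reduce projectivity of the bimodule to projectivity of each $B(-,a)$ as a left $\catlie$-module via semisimplicity of $\rat\sym_a$, then observe that $\catlie(a,-)$ is representable and $(\catlie\odot I)(a,-)$ is a finite direct sum of representables $\catlie(a-1,-)$, hence both are projective by Yoneda. The only minor slip is that the identification $\delta^{(1)}\catlie \cong \catlie\odot I$ is Definition~\ref{defn:delta1} rather than Lemma~\ref{lem:delta_catlie} (the latter concerns $\delta\catlie \cong \catlie\odot\lie$).
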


\begin{proof}
Since we are working in characteristic zero, the group rings $\rat \sym_n$ are all semisimple. Hence, for the projectivity statement, it suffices to consider the left $\catlie$-module structure. 

Now, $\catlie (n,-)$ is projective as a left $\catlie$-module for each $n$, as follows from Yoneda's lemma (see \cite{P_analytic}, for example). Moreover, $\big(\catlie \odot I\big) (n,-)$ can be written, as a left $\catlie$-module, as the direct sum of copies of $\catlie (n-1,-)$ indexed by subsets of $\mathbf{n}$ of cardinal $n-1$. Thus, this also is projective as a left $\catlie$-module. 

This establishes the first statement, from which the second follows essentially by definition of the second syzygy.
\end{proof}

\begin{rem}
The results of \cite{PV} show more: namely, the resolution of $\homol_0$ provides by $\dgcat$ is very close to being a minimal resolution. The failure to be minimal stems from the exceptional behaviour of the family of simple $\catlie$-modules corresponding to the trivial representation of $\rat \sym_n$, for each $n \in \nat$. These are {\em projective} $\catlie$-modules and hence are also projective in $\flie^\mu$. In particular, they do not require `resolving', whereas $\dgcat$ contains contributions corresponding to these, hence is not minimal. This family is, however, the only obstruction to minimality.
\end{rem}


\providecommand{\bysame}{\leavevmode\hbox to3em{\hrulefill}\thinspace}
\providecommand{\MR}{\relax\ifhmode\unskip\space\fi MR }
\providecommand{\MRhref}[2]{%
  \href{http://www.ams.org/mathscinet-getitem?mr=#1}{#2}
}
\providecommand{\href}[2]{#2}

\end{document}